\newif\if@fewtab\@fewtabtrue
\xdef\hourmin{\number\count255}
\xdef\hourmin{\hourmin:\ifnum\count255<10 0\fi\the\count255}}
\def\ps@draft{\let\@mkboth\@gobbletwo
     \def\@oddfoot{\hbox to 7 cm{\tiny \versionno
        \hfil}\hskip -7cm\hfil\rm\thepage \hfil {\tiny\draftdate}}
     \def\@oddhead{}
     \def\@evenhead{}\let\@evenfoot\@oddfoot}
\def\draftdate{\number\month/\number\day/\number\year\ \ \ \hourmin }
\def\citen#1{\if@filesw \immediate\write \@auxout {\string\citation{#1}}\fi%
\@tempcntb\m@ne \let\@h@ld\relax \def\@citea{}%
\@for \@citeb:=#1\do {\@ifundefined {b@\@citeb}%
     {\@h@ld\@citea\@tempcntb\m@ne{\bf ?}%
     \@warning {Citation `\@citeb ' on page \thepage \space undefined}}%
     {\@tempcnta\@tempcntb \advance\@tempcnta\@ne
     \setbox\z@\hbox\bgroup\ifcat0\csname b@\@citeb \endcsname \relax
     \egroup \@tempcntb\number\csname b@\@citeb \endcsname \relax
     \else \egroup \@tempcntb\m@ne \fi \ifnum\@tempcnta=\@tempcntb
     \ifx\@h@ld\relax \edef \@h@ld{\@citea\csname b@\@citeb\endcsname}%
     \else \edef\@h@ld{\hbox{--}\penalty\@highpenalty
     \csname b@\@citeb\endcsname}\fi
     \else \@h@ld\@citea\csname b@\@citeb \endcsname \let\@h@ld\relax \fi}%
\def\@citea{,\penalty\@highpenalty\hskip.13em plus.13em minus.13em}}\@h@ld}
\def\@citex[#1]#2{\@cite{\citen{#2}}{#1}}%
\def\@cite#1#2{\leavevmode\unskip\ifnum\lastpenalty=\z@\penalty\@highpenalty\fi%
   \ [{\multiply\@highpenalty 3 #1%
   \if@tempswa,\penalty\@highpenalty\ #2\fi}]}   %
\def\apo           {\mbox{\sc s}}
\def\apoi          {\mbox{\sc s}^{-1}}
\def\be            {\begin{equation}}
\def\bearl         {\begin{array}{l}}
\def\bearll        {\begin{array}{ll}}
\def\BF            {\ensuremath{F}}
\newcommand\BG[2]  {B_{#1,#2}}
\def\bico          {_{F}}
\def\bicoa         {_\bicoaa}
\def\bicoaa        {{\triangleright\triangleleft}}
\def\bbico         {_{\!F}}
\def\bohr          {{\ohr\bico}}
\def\brho          {{\rho\bbico^{}}}
\def\C             {{\ensuremath{\mathcal C}}}
\def\catpic        {{\footnotesize \shadowbox{\C}}}
\def\catpicH       {{\footnotesize \shadowbox{\HBimod}}}
\def\cir           {\,{\circ}\,}
\newcommand\coend[1]{\int^{#1}\hspace*{-.31em}#1^\vee{\otimes}#1}
\newcommand\Coend[2]{\int^{#2}\hspace*{-.23em}#1(#2,#2)}
\def\coa           {_\triangleright}
\def\coar          {_\triangleleft}
\newcommand\Cor[2] {\ensuremath{\mathrm{Cor}^{}_{#1;#2}}}
\newcommand\Corw[2]{\ensuremath{\mathrm{Cor}^{\omega}_{#1;#2}}}
\newcommand\Corr[3]{\ensuremath{\mathrm{Cor}^{}_{#1;#2,#3}}}
\def\ee            {\end{equation}}
\def\eear          {\end{array}}
\def\EndC          {{\ensuremath{\mathrm{End}_\C}}}
\def\Endk          {{\ensuremath{\mathrm{End}_\ko}}}
\def\eps           {\varepsilon}
\def\eq            {\,{=}\,}
\newcommand\erf[1] {(\ref{#1})}
\def\Fomega        {F^\omega}
\def\fftc          {factorizable finite tensor category}
\def\findim        {fi\-ni\-te-di\-men\-si\-o\-nal}
\def\Haa           {{\ensuremath{H\hspace*{-6pt}H^*\bicoa}}}
\def\Hb            {{\ensuremath{F}}}
\def\HBimod        {{\ensuremath{H\mbox{-}\mathrm{Bimod}}}}
\def\HK            {{{\ensuremath K}}}
\def\HMod          {{\ensuremath{H}\text{-Mod}}}
\def\Hom           {{\ensuremath{\mathrm{Hom}}}}
\def\HomC          {{\ensuremath{\mathrm{Hom}_\C}}}
\def\HomHH         {{\ensuremath{\mathrm{Hom}_{H|H}}}}
\def\Homk          {{\ensuremath{\mathrm{Hom}_\ko}}}
\def\Hs            {{\ensuremath{H^*}}}
\def\Hss           {{H^*_{}}}
\def\id            {\mbox{\sl id}}
\def\idHs          {\ensuremath{\id_{{H^{\phantom:}}^{\!\!*}}}}
\def\idsm          {\mbox{\footnotesize\sl id}}
\def\iHb           {\imath^{\Hb}}
\def\iHK           {\imath^\HK}
\def\iN            {\,{\in}\,}
\def\J             {{\ensuremath{\mathcal J}}}
\def\ko            {{\ensuremath{\Bbbk}}}
\def\LAak          {\ensuremath{z_{a_k}}}
\def\LAal          {\ensuremath{z_{a_l}}}
\def\LAbk          {\ensuremath{z_{b_k}}}
\def\LAdk          {\ensuremath{z_{d_k}}}
\def\LAek          {\ensuremath{z_{e_k}}}
\def\LAel          {\ensuremath{z_{e_l}}}
\def\LASk          {\ensuremath{z_{S_k}}}
\def\LAtjk         {\ensuremath{z_{t_{j,k}}}}
\def\LAwi          {\ensuremath{z_{\omega_i}}}
\def\LARj          {\ensuremath{z_{\R_j}}}
\newcommand\labl[1]{\label{#1}\ee}
\newcommand\Map[2] {\ensuremath{\text{Map}_{#1;#2}}}
\def\Mapgn         {\ensuremath{\text{Map}_{g:n}}}
\def\Mapgpq        {\ensuremath{\text{Map}_{g:p,q}}}
\def\Mapgppq       {\ensuremath{\text{Map}_{g:p+q}}}
\def\Mod           {\mbox{-Mod}}
\newcommand\nxl[1] {\\[#1mm]}
\newcommand\Nxl[1] {\\[-1.3em]\\[#1mm]}
\def\ohr           {\reflectbox{$\rho$}}
\def\ohrad         {\ohr_\diamond}
\def\ohrv          {\ohr_{\scriptscriptstyle\!\vee}^{}}
\def\ohrV          {{}_{\scriptscriptstyle\vee\!}^{}\ohr}
\def\one           {{\bf1}}
\def\op            {^{\mathrm{op}}}
\def\oti           {\,{\otimes}\,}
\def\otik          {\,{\otimes_\ko}\,}
\def\Otik          {{\otimes_\ko}}
\def\QB            {\mathcal Q^K}
\def\QQ            {\ensuremath{\mathcal O_{\!K}}}
\def\qquand        {\qquad{\rm and}\qquad}
\def\R             {\vartheta}
\def\rep           {representation}
\def\rhov          {\rho_{\scriptscriptstyle\!\vee}^{}}
\def\rhoV          {{}_{\scriptscriptstyle\vee\!}^{}\rho}
\newcommand\setulen[2]{\setlength\unitlength{.#1#2pt}}
\def\SK            {S_\HK}
\def\slz           {\ensuremath{\mathrm{SL}(2,\zet)}}
\def\sse           {\scriptsize }
\newcommand\Surf[2] {\Sigma_{#1,#2}}
\def\tauHH         {\tau^{}_{\!H,H}}
\def\tauHHv        {\tau^{}_{\!H,H^*_{}}}
\def\tauHvH        {\tau^{}_{\!H^*_{}\!,H}}
\def\Times         {\,{\times}\,}
\def\TK            {T_\HK}
\def\uvi           {{t}}
\def\Vect          {\ensuremath{\mathcal V}\mbox{\sl ect}}
\def\Vectk         {\ensuremath{\mathcal V\mbox{\sl ect}_\ko}}
\def\Vee           {{}^{\vee\!}}
\def\Xs            {X^{*_{}}_{\phantom:}}
\def\zet           {{\ensuremath{\mathbb Z}}}
\def\ak   {\ensuremath{a_k}}
\def\bk   {\ensuremath{b_k}}
\def\dk   {\ensuremath{d_k}}
\def\ek   {\ensuremath{e_k}}
\def\sk   {\ensuremath{S_k}}
\def\wi   {\ensuremath{\omega_i}}
\def\Ri   {\ensuremath{\R_i}}
\newcommand\includepichtft[1] {{\begin{picture}(0,0)(0,0)
                    \scalebox{.304}{\includegraphics{imgs/pic_htft_#1.eps}}\end{picture}}}
\newcommand\Includepichtft[1] {{\begin{picture}(0,0)(0,0)
                    \scalebox{.38}{\includegraphics{imgs/pic_htft_#1.eps}}\end{picture}}}
\newcommand\INcludepichtft[2] {{\begin{picture}(0,0)(0,0)
                   \scalebox{.#2}{\includegraphics{imgs/pic_htft_#1.eps}}\end{picture}}}
\newcommand\includepichopf[1] {{\begin{picture}(0,0)(0,0)
                    \scalebox{.304}{\includegraphics{imgs/pic_hopf_#1.eps}}\end{picture}}}
\newcommand\eqpic[4]{\begin{eqnarray}
                    \begin{picture}(#2,#3){}\end{picture}\nonumber\\
                    \raisebox{-#3pt}{ \begin{picture}(#2,#3) #4 \end{picture} }
                    \label{#1} \\~\nonumber \end{eqnarray} }
\newcommand\Eqpic[4]{\begin{eqnarray}
                    \begin{picture}(#2,#3){}\end{picture}\nonumber\\
                    \raisebox{-#3pt}{ \begin{picture}(#2,#3) #4 \end{picture} }
                    \nonumber \\[-3pt]~\label{#1} \end{eqnarray} }
\newtheorem{thm}{Theorem}
\newtheorem{lem}[thm]{Lemma}
\newtheorem{lemma}[thm]{Lemma}
\newtheorem{prop}[thm]{Proposition}
\newtheorem{cor}[thm]{Corollary}
\theoremstyle{definition}
\newtheorem{rem}[thm]{Remark}
\newtheorem{defi}[thm]{Definition}
\begin{document}

\def\cir{\,{\circ}\,} 
\numberwithin{equation}{section}
\numberwithin{thm}{section}

\begin{flushright}
    {\sf ZMP-HH/12-13}\\
    {\sf Hamburger$\;$Beitr\"age$\;$zur$\;$Mathematik$\;$Nr.$\;$447}\\[2mm]
    July 2012
\end{flushright}
\vskip 3.5em
\begin{center}
\begin{tabular}c \Large\bf Higher genus mapping class group invariants \\[2mm]
                 \Large\bf from factorizable Hopf algebras
\end{tabular}
\end{center}\vskip 2.1em
\begin{center}
   ~J\"urgen Fuchs\,$^{\,a}$,~
   ~Christoph Schweigert\,$^{\,b}$,~
   ~Carl Stigner\,$^{\,a}$
\end{center}

\vskip 9mm

\begin{center}\it$^a$
   Teoretisk fysik, \ Karlstads Universitet\\
   Universitetsgatan 21, \ S\,--\,651\,88\, Karlstad
\end{center}
\begin{center}\it$^b$
   Organisationseinheit Mathematik, \ Universit\"at Hamburg\\
   Bereich Algebra und Zahlentheorie\\
   Bundesstra\ss e 55, \ D\,--\,20\,146\, Hamburg
\end{center}
\vskip 5.3em

\noindent{\sc Abstract}
\\[3pt]
Lyubashenko's construction associates representations of mapping class groups
\Mapgn\ of Rie\-mann surfaces of any genus $g$ with any number $n$ of
holes to a factorizable ribbon category. We consider this construction as
applied to the category of bimodules over a finite-dimensional factorizable
ribbon Hopf algebra $H$. For any such Hopf algebra we find an invariant of
\Mapgn\ for all values of $g$ and $n$. More generally, we obtain such invariants
for any pair $(H,\omega)$, where $\omega$ is a ribbon automorphism of $H$.
\\
Our results are motivated by the quest to understand higher genus
correlation functions of bulk fields in two-dimensional conformal field
theories with chiral algebras that are not necessarily semisimple, so-called
logarithmic conformal field theories.

  \newpage

\section{Introduction}

The mapping class groups of Riemann surfaces with holes form an interesting
system with deep properties and rich relations to geometry and arithmetic.
It is therefore remarkable that a relatively simple algebraic structure -- a
finite-dimensional factorizable ribbon Hopf algebra $H$ -- gives rise
\cite{lyub6} to a family of (projective) representations of all these mapping
class groups. The construction of mapping class group representations in
\cite{lyub6} does not require $H$ to be semisimple.
For semisimple $H$ the so obtained system of representations obeys even tighter
constraints: it is part of a so-called modular functor, or a three-dimensional
topological field theory. In the present article we do \emph{not} require
semisimplicity.

Another algebraic structure leading to a system of representations of mapping
class groups are vertex algebras which arise in chiral conformal
field theories. More specifically, in this case the mapping class group
representations are derived from the monodromies of the conformal blocks
\cite{FRbe} associated with the vertex algebra. In fortunate situations
the representation category of a vertex algebra is equivalent to the one of a
factorizable ribbon Hopf algebra at least as an abelian category.
Surprisingly, such a ``Kazhdan-Lusztig correspondence'' seems to work
particularly well for some classes of vertex algebras for which the category in
question is \emph{not} semisimple \cite{fgst2,naTs2}. The chiral conformal field
theories associated with these cases are known as {\em logarithmic} theories.

In a full, local conformal field theory, one aims in particular at constructing
correlators of bulk fields as specific \emph{bilinear} combinations of conformal
blocks. Translating the situation to the Hopf algebra setting, this amounts to
considering the mapping class group representations coming from the
factorizable ribbon Hopf algebra $H\oti H\op_{}$, the enveloping algebra of $H$.
(By factorizability, the category of $H\oti H\op_{}$-modules is equivalent as a
ribbon category to the Drinfeld center of \HMod.)
In \cite{fuSs3} we have constructed, for any ribbon automorphism $\omega$ of
$H$, an invariant of the mapping class group of the torus with one hole. The
construction in \cite{fuSs3} is based on a family of symmetric Frobenius
algebras $\Fomega$ in the braided monoidal category \HBimod,
which is braided equivalent to $(H{\otimes} H\op_{})$\Mod.
In \cite{fuSs4} we have in addition
derived, for the case that the automorphism $\omega$ is the identity morphism,
integrality properties of the partition function, i.e.\ of the correlator for
the torus without holes, relating it to the Cartan matrix of the category \HMod.
This shows in particular that the so obtained invariants
are non-zero.

In the present paper we solve the general problem of obtaining mapping class
group invariants at \emph{arbitrary} genus. Given a factorizable Hopf algebra
$H$ and a ribbon automorphism $\omega$ of $H$, we identify,
for each non-negative value of $g$ and of $n$, a natural invariant
$\Corw gn$ under the action of the mapping class group \Mapgn\ of Riemann
surfaces of genus $g$ with $n$ holes on a space of morphisms
that is obtained by the construction of Lyubashenko \cite{lyub6}
when taking all $n$ insertions to be given by the $H$-bimodule $\Fomega$.
Rephrased in conformal field theory terms, we identify natural candidates for
bulk correlation functions in a full, local conformal field theory and prove
their modular invariance, for any number of insertions and at any genus.

This paper is organized as follows. In Section \ref{sec:basic} we introduce
pertinent concepts and notations that are needed to describe the morphisms
$\Corw gn$ and to state our main result. This assertion, Theorem \ref{thm:main},
is formulated in Section \ref{sec:thm}. To establish it requires quite a few
detailed calculations which, for the case $\omega \eq \id_H$, take up Sections
\ref{sec:lemmata} and \ref{sec:proofmain}. Section \ref{sec:lemmata} is
essentially a collection of lemmas that are instrumental in Section
\ref{sec:proofmain}; their proofs can be safely skipped by readers primarily
interested in the results. Finally, in Section \ref{sec:omega} we complete
our main result by extending the analysis of Sections \ref{sec:lemmata} and
\ref{sec:proofmain}, and thus the proof of Theorem \ref{thm:main}, to the
case of non-trivial ribbon automorphisms.

We expect that our considerations generalize from the categories \HBimod\ to a
larger class of braided finite tensor categories \C. In particular, the analogue
of the $H$-bimodule $\Fomega$ should be the coend of a natural functor from
$\C\op \Times \C$ to the enveloping category $\C \,{\boxtimes}\, \C^{\rm rev}$.
Accordingly we formulate various statements in such a more general context, e.g.\
we give the invariants $\Corw gn$ first as morphisms in \HBimod\ in entirely
categorical terms (see formula \erf{Sk_morph}) before we present their concrete
expressions as linear maps (see \erf{CorrgnH}). However, a generalization of
our main result to such a context is still elusive. Concretely, the explicit
expressions for the coalgebra structure of the coend in \erf{pic-Hb-Frobalgebra}
involve the integral of the Hopf algebra $H$ over the field \ko\ 
defining the category. What we
are missing is a corresponding structure of the \emph{category} \HMod\ that
comes from the integral of $H$ and endows the coend with a coalgebra structure.


\section{Background}\label{sec:basic}

In this section we collect some basic definitions and notation for a class of
Hopf algebras and for representations of mapping class groups associated with
these Hopf algebras, which will be needed for formulating our main result,
Theorem \ref{thm:main}.

\subsection{Factorizable Hopf algebras}

Throughout this paper, the symbol \ko\ stands for an algebraically closed field
of characteristic zero, while $H$ is a \findim\ ribbon Hopf algebra over \ko,
which in addition is factorizable.  In the sequel, for brevity we will refer 
to $H$ just as a \emph{factorizable ribbon Hopf algebra}, suppressing 
finite-dimensionality over \ko.
All modules and bimodules in this paper will be finite-dimensional as \ko-vector
spaces as well. Similarly, all categories to be considered are assumed to be
abelian and \ko-linear, with all morphism sets being \findim\ \ko-vector spaces.
We denote by $m$, $\eta$, $\Delta$, $\eps$ and $\apo$ the product,
unit, coproduct, counit and antipode of the Hopf algebra $H$.

Let us recall the meaning of a Hopf algebra to be factorizable ribbon:

\begin{defi} ~\nxl1
(a)\, A Hopf algebra $H \,{\equiv}\, (H,m,\eta,\Delta,\eps,\apo)$ is called
\emph{quasitriangular} iff it comes with an invertible element $R$ of $H\otik H$
such that the coproduct and opposite coproduct are intertwined by $R$, i.e.\
$R\, \Delta\, R^{-1} \eq \tauHH \cir \Delta\,{\equiv}\,\Delta^{\!\rm op}_{}$, and
   \be
   (\Delta \oti \id_H) \circ R = R_{13}\cdot R_{23} \qquand
   (\id_H \oti \Delta) \circ R = R_{13}\cdot R_{12} \,.
   \labl{deqf-qt}
The element $R$ is called the \emph{R-matrix} of $H$.
\\[2pt]
(b)\, For $(H,R)$ a quasitriangular Hopf algebra, the invertible element
$Q \,{:=}\, R_{21}\,{\cdot}\, R$ of $H\otik H$
is called the \emph{monodromy matrix} $Q$ of $H$.
\\[2pt]
(c)\, A quasitriangular Hopf algebra $(H,R)$ is called a \emph{ribbon} Hopf
algebra iff it comes with a central invertible element $v\iN H$ obeying
   \be
   \apo \circ v = v \,, \qquad \eps \circ v = 1 \qquand
   \Delta \circ v = (v\oti v) \cdot Q^{-1} .
   \labl{def-ribbon}
$v$ is called the \emph{ribbon element} of $H$.
\\[2pt]
(d)\, A quasitriangular Hopf algebra $(H,R)$ is called \emph{factorizable}
iff the monodromy matrix $Q$ can be expressed as $\sum_\ell h_\ell \oti k_\ell$,
where $\{h_\ell\}$ and $\{k_\ell\}$ are two vector space bases of $H$.
\end{defi} \smallskip

Here for \ko-vector spaces $V$ and $W$ the linear map $\tau_{V,W}\colon V\otik W
\,{\stackrel\simeq\to}\, W\otik V$ is the flip map that exchanges the factors
in a tensor product. Also recall that a \findim\ Hopf algebra $H$ has a
left integral $\Lambda\iN H \,{\equiv}\, \Homk(\ko,H)$
and a right cointegral $\lambda\iN\Hs \,{\equiv}\, \Homk(H,\ko)$ (as well as
a right integral and a left cointegral), which are unique up to scalars.
Moreover, if $H$ is factorizable, then it is unimodular, i.e.\ the
integral $\Lambda\iN H$ is two-sided.
With applications in logarithmic conformal field theory in mind \cite{fgst}, it
should be appreciated that factorizability can be formulated for Hopf algebras
which do not have an R-matrix, but still a monodromy matrix.

Factorizability of $H$ is equivalent to invertibility of the \emph{Drinfeld map}
$f_Q \iN \Homk(\Hs,H)$, which is given by
$f_Q \,{:=}\, (d_H\oti \id_H) \cir (\idHs\oti Q)$, with $d$ the
evaluation morphism in \Vectk. The Drinfeld map, and likewise the map
$f_{Q^{-1}}$ that is obtained when replacing the monodromy matrix by its
inverse, is not just a linear map from \Hs\ to $H$, but it also intertwines
the left-coadjoint action of $H$ on \Hs\ and the left-adjoint action of $H$ on
itself. If $f_Q$ is invertible, then $f_Q(\lambda)$ is a non-zero multiple of
$\Lambda$. The normalizations of the integral and cointegral can then be chosen
in such a way that $\lambda\cir\Lambda \eq 1$ and $f_Q(\lambda) \eq \Lambda$
(this determines $\lambda$ and $\Lambda$ uniquely up to a common sign factor).
Doing so one arrives at the following identities \cite[(5.18)]{fuSs3},
which we present graphically:
  \Eqpic{fQS_Psi} {430} {41} { \put(-4,-4){
    \put(0,0) { \Includepichtft{97e}
  \put(-2.5,100)   {\sse$ H $}
  \put(14.5,3.4)   {\sse$ Q $}
  \put(43.6,35)    {\sse$ \apo $}
  \put(73.1,75.5)  {\sse$ \lambda $}
  \put(69.4,59.9)  {\sse$ m $}
  \put(128.3,3.2)  {\sse$ Q^{-1} $}
  \put(130,100)    {\sse$ H $}
  }
  \put(167,46)     {$ = $}
    \put(193,13) { \Includepichtft{97g}
  \put(-2.2,79)    {\sse$ H $}
  \put(7.7,-1)     {\sse$ \Lambda $}
  \put(14.4,40.2)  {\sse$ \Delta $}
  \put(30.8,79)    {\sse$ H $}
  }
  \put(256,46)     {$ = $}
  \put(294,0) { \Includepichtft{97fA}
  \put(-3.1,100)   {\sse$ H $}
  \put(31.1,3.2)   {\sse$ Q^{-1} $}
  \put(43.6,35)    {\sse$ \apo $}
  \put(73.1,75.5)  {\sse$ \lambda $}
  \put(69.4,59.9)  {\sse$ m $}
  \put(112.5,3.4)  {\sse$ Q $}
  \put(128.8,100)  {\sse$ H $}
  } } }
Such diagrams are to be read from bottom to top.
Below we will often suppress the labels indicating the product and
coproduct of $H$, as well as those for the antipode $\apo$ (which
is drawn as an empty circle) and its inverse $\apoi$ (full circle).

The following facts about the category \HMod\ of finite-dimensional left modules
over a factorizable ribbon
Hopf algebra $H$ are well-known: \HMod\ is a braided
rigid monoidal category, and even a factorizable ribbon category. Moreover, it
is a finite tensor category in the sense of \cite{etos}, i.e.\ it has finitely
many isomorphism classes of simple objects, each of them has a projective
cover, and every object has finite length.

We endow the category \HBimod\ of finite-dimensional $H$-bimodules in an
analogous manner with the structure of a finite factorizable ribbon category:
We use the pull-back of left and right actions along the coproduct to obtain
the structure of a monoidal category: for $H$-bimodules $(X,\rho_X,\ohr_X)$
and $(Y,\rho_Y,\ohr_Y)$ we define the left and right actions of $H$ on the
tensor product vector space $X \otik Y$ by
   \be
   \bearl
   \rho_{X\otimes Y}^{} := (\rho_X \oti \rho_Y) \circ (\id_H \oti \tau_{H,X} \oti
                        \id_Y) \circ (\Delta \oti \id_X \oti \id_Y)  \qquand
   \Nxl3
   \ohr_{X\otimes Y}^{} := (\ohr_X \oti \ohr_Y) \circ (\id_X \oti \tau_{Y,H} \oti
                        \id_H) \circ (\id_X \oti \id_Y \oti \Delta) \,.
   \eear
   \labl{def-tp}
The monoidal unit for this tensor product is the one-dimensional vector
space \ko\ with left and right $H$-action given by the counit,
$\one_{H\text{-Bimod}} \eq (\ko,\eps,\eps)$. To endow the monoidal category
\HBimod\ with a braiding, we use the action of the R-matrix of $H$ from the
right and the action of its inverse from the left. When regarding the resulting
braiding morphism $c_{X,Y}^{}$ in \HBimod\ as a linear map, i.e.\ as a morphism
in the category \Vect\ of \ko-vector spaces, we represent it pictorially as
   \eqpic{bibraid} {130} {46} {
   \put(0,52)      {$ c_{X,Y}^{} ~= $}
     \put(59,0) {\Includepichtft{90}
   \put(-17,9)     {\sse$ R^{-1} $}
   \put(15.5,93)   {\sse$ \ohr_Y^{} $}
   \put(24,-9.2)   {\sse$ X $}
   \put(25.8,108)  {\sse$ Y $}
   \put(29.8,25)   {\sse$ \rho_X^{} $}
   \put(42,56)     {\sse$ \tau_{X,Y}^{} $}
   \put(42.3,-9.2) {\sse$ Y $}
   \put(42.8,108)  {\sse$ X $}
   \put(47.2,38.8) {\sse$ \rho_Y^{} $}
   \put(50.2,94)   {\sse$ \ohr_X^{} $}
   \put(75,68)     {\sse$ R $}
   } }
Here the quarter disks refer to left or right actions of the Hopf
algebra $H$, while the crossing is just the flip map of vector spaces.
(As the braiding on the category of vector spaces is symmetric, the use of
over- and under-crossings in these pictures does not contain any mathematical
information, but is merely for graphical convenience.)

Using the equivalence of $H$-bimodules with $H\Otik H$-modules, it is not hard
to verify that these prescriptions endow
\HBimod\ with the structure of a braided monoidal category.
This is the braided monoidal category we are interested in in this paper. We
endow it with more structure: we introduce right and left duals by associating
to a bimodule $X \eq (X,\rho,\ohr)\iN\HBimod$ the bimodules
   \be
   X^\vee := (X^*,\rhov,\ohrv)  \qquand {}^{\vee}\!X:= (X^*,\rhoV,\ohrV\,)
   \labl{def-duals}
with left and right $H$-actions defined by
   \Eqpic{Hbim_dualactions} {420} {49} { \put(-21,19){
   \put(0,35)    {$\rhov ~:= $}
   \put(34,0)  {\Includepichtft{96c}
   \put(-2.5,-9.2) {\sse$ H $}
   \put(10,-9.2) {\sse$ X^*_{} $}
   \put(31.3,49.8) {\sse$ \rho $}
   \put(42.2,94) {\sse$ X^*_{\phantom|} $}
   }
   \put(126,35)  {$\ohrv ~:= $}
   \put(175,0) {\Includepichtft{96d}
   \put(-5,-9.2) {\sse$ X^*_{} $}
   \put(11,60.7) {\sse$ \ohr $}
   \put(29.8,94) {\sse$ X^*_{\phantom|} $}
   \put(41,-9.2) {\sse$ H $}
   }
   \put(259,35)  {$\rhoV ~:= $}
   \put(298,0) {\Includepichtft{96e}
   \put(-2.4,-9.2) {\sse$ H $}
   \put(13.6,94) {\sse$ X^*_{\phantom|} $}
   \put(35,54.7) {\sse$ \rho $}
   \put(44,-9.2) {\sse$ X^*_{} $}
   }
   \put(390,35)  {$\ohrV ~:= $}
   \put(429,0) {\Includepichtft{96f}
   \put(11,54.1) {\sse$ \ohr $}
   \put(27,-9.2) {\sse$ X^*_{\phantom|} $}
   \put(-2.9,94) {\sse$ X^*_{\phantom|} $}
   \put(45,-9.2) {\sse$ H $}
   } } }
The left and right dualities are naturally compatible -- the category \HBimod\
has a natural structure of a sovereign tensor category. To see this, denote
by $t \,{:=}\, uv^{-1}$ the product of the Drinfeld element
   \be
   u := m \circ (\apo\oti\id_H) \circ \tauHH \circ R  ~\in H
   \labl{u-R}
with the inverse of the ribbon element $v$ of $H$; $t$ is a group-like element 
of $H$. Then the family of endomorphisms
   \eqpic{pivX} {170} {38} {
   \put(0,39)    {$ \pi_X ~:= $}
   \put(50,0)  {\Includepichtft{97a}
   \put(-4.5,-8) {\sse$ \Xs $}
   \put(-3,88)   {\sse$ \Xs $}
   \put(28,28)   {\sse$ \uvi $}
   \put(58,20)   {\sse$ \uvi $}
   }
   \put(127,39)  {$\in \Endk(X^*_{\phantom|}) $}
   }
is a natural monoidal isomorphism between the left and right duality functors
and thus endows \HBimod\ with the structure of a sovereign tensor category.
Being sovereign, \HBimod\ is also endowed with a balancing and thus has the
structure of a ribbon category. From here on, the symbol \HBimod\ stands for this
sovereign ribbon category.  Explicitly, the twist endomorphism $\theta_X$ of an
$H$-bimodule $(X,\rho,\ohr)$ is given by acting with the ribbon element $v$ from
the left and with its inverse $v^{-1}$ from the right \cite[Lemma\,4.8]{fuSs3},
  \be
  \theta_X = \rho \circ (\id_H \oti \ohr) \circ (v \oti \id_X \oti v^{-1}) \,.
  \labl{deftwist}

\begin{rem}
The category \HBimod\ with this structure of sovereign ribbon category is
braided equivalent to $(H{\otimes_\ko} H\op_{})$\Mod\ and thus to
$\HMod \,{\boxtimes}\, \HMod^{\rm rev}$. This is a very
natural category indeed: it can be regarded as a categorification of the notion
of enveloping algebra. Factorizibility of \HBimod\ amounts to the statement
that \HBimod\ is braided equivalent to the Drinfeld center of \HMod.
\end{rem}


\subsection{The handle Hopf algebra and half-monodromies}\label{subsec:hHa}

We now recall \cite{maji25,lyub8} that any finite sovereign braided tensor
category \C\ contains a canonical Hopf algebra object. It can be constructed
as the coend
  \be
  \HK = \Coend FX = \coend X
  \labl{defK}
of the functor $F$ that maps a pair $(X,Y)$ of objects of \C\ to the object
$X^\vee \oti Y \iN \C$. As a coend, $K$ comes with a dinatural family
$(\iHK_X)_{X\in\C}$ of morphisms $\iHK_X\iN\Hom_\C(X^\vee{\otimes}\, X, \HK)$.
The structure morphisms of the Hopf algebra object \HK\ are obtained with the
help of the family $\iHK$ and the braiding and duality of \C. They can be
found e.g.\ in \cite{lyub6,vire4}; we refrain from reproducing them here.
We refer to $K$ as the \emph{handle Hopf algebra} for the category \C.

The Hopf algebra \HK\ is also endowed with a Hopf pairing. We call the finite
sovereign braided tensor category \C\ a \emph{\fftc} iff this Hopf pairing is
non-degenerate. This terminology is motivated by the fact that in the case of
$\C \eq \HMod$, non-degeneracy of the Hopf pairing is equivalent to invertibility
of the Drinfeld map, i.e.\ to factorizability of $H$ \cite[Eq.\,(5.14)]{fuSs3}.
If and only if \C\ is factorizable in this sense, then the Hopf algebra \HK\ has
(two-sided) integrals and cointegrals \cite[Prop.\,5.2.9]{KEly}.

Let now $H$ be a factorizable ribbon Hopf algebra and $\C \eq \HMod$. In this case,
which was already considered in \cite{lyub8}, the coend $\HK\eq H^*\coa$ is the
dual space $\Hs$ endowed with the coadjoint action of $H$. Similarly, if \C\ is
the braided tensor category \HBimod, the bimodule $\HK \,{=:}\, \Haa$
can be described as follows: the underlying vector space is the tensor
product $\Hs\otik\Hs$, and introducing the morphisms
   \eqpic{def_lads} {370} {49} {
   \put(0,52)      {$ \rho\coa ~:= $}
   \put(42,0) { \includepichopf{48}
   \put(6.5,-9.2){\sse$ H $}
   \put(23.4,-9.2) {\sse$ \Hss $}
   \put(32.3,65.6) {\sse$ \apo $}
   \put(64.5,114)  {\sse$ \Hss $}
   }
   \put(167,52)  {and}
   \put(228,52)    {$ \ohr\coar ~:= $}
   \put(278,0) { \includepichtft{48c}
   \put(-4,-9.2)   {\sse$ \Hss $}
   \put(8.8,60.5)  {\sse$ \apo $}
   \put(37,114)    {\sse$ \Hss $}
   \put(53.4,-9.2) {\sse$ H $}
   \put(61.4,10.5) {\sse$ \apoi $}
   } }
the bimodule structure is given by
  \be
  \Haa = (\Hs \Otik\, \Hs,\rho\coa\oti\idHs,\idHs\oti\rho\coar) \,.
  \labl{defHaa}
(For the dinatural family of the coend \Haa\ see \cite[Eq.\,(A.29)]{fuSs3}.)

\medskip

Below we will recall that representations of mapping class groups can be
constructed with the help of morphisms involving the Hopf algebra \HK\ in a
\fftc\ \C. Here we provide the main building blocks of those morphisms. It
is natural to use the universal property of \HK\ as a coend
to specify these building blocks in terms of dinatural families.
\\[-2.36em]

\def\leftmargini{1.57em}~\\[-1.45em]\begin{enumerate}\addtolength{\itemsep}{-3pt}

\item
Denote by $(\theta_X)_{X\in\C}$ the twist on the tensor category
$\C$. Then we define an endomorphism $\TK$ of $\HK$ in terms of dinatural
families by
   \eqpic{p9} {92}{24} {
   \put(0,0)   {\Includepichtft{26b}
   \put(-11.1,39.7){\small$\TK$}
   \put(-5,-9.2)   {\sse$ X^{\!\vee} $}
   \put(17.4,25.7) {\sse$ \iHK_X $}
   \put(15.3,-9.2) {\sse$ X $}
   \put(6.8,63.3)  {\sse$ \HK $}
   }
   \put(47,28)     {$=$}
   \put(83,0)  {\Includepichtft{26a}
   \put(-11.9,14)  {\sse$ \theta_{\!X^{\!\vee}_{}}^{}$}
   \put(-3,-9.2)   {\sse$ X^{\!\vee} $}
   \put(8.4,63.3)  {\sse$ \HK $}
   \put(18.9,37.7) {\sse$ \iHK_X $}
   \put(17.4,-9.2) {\sse$ X $}
   }
   \put(133,46)    {\catpic}
   }
Here we indicate explicitly in the figure that the diagram has to be read in
the ribbon category \C\ (rather than, as in all pictures displayed so far, in
\Vectk), so that in particular over- and under-crossings must be carefully distinguished.

\item
Similarly, the monodromies
$c_{Y^\vee_{\phantom,}\!,X} \cir c_{X,Y^\vee_{\phantom,}}$
of \C\ allow us to deduce an endomorphism \QQ\ of $K\oti K$ from the equality
   \eqpic{QHH} {135} {45} {
   \put(0,0)  {\Includepichtft{103e}
   \put(-4.2,60.1){\small$ \QQ $}
   \put(-9.8,21)  {\sse$ \iHK_X $}
   \put(-4,-9.2)  {\sse$ X^{\!\vee} $}
   \put(5.3,109)  {\sse$ \HK $}
   \put(7,-9.2)   {\sse$ X $}
   \put(26.5,-9.2){\sse$ Y^{\!\vee} $}
   \put(28.8,109) {\sse$ \HK $}
   \put(38,-9.2)  {\sse$ Y $}
   \put(42.4,21)  {\sse$ \iHK_Y $}
   }
   \put(62,50)    {$ = $}
   \put(97,0) {\Includepichtft{103f}
   \put(-6,-9.2)  {\sse$ X^{\!\vee} $}
   \put(-5.8,89)  {\sse$ \iHK_X $}
   \put(8,-9.2)   {\sse$ X $}
   \put(6.1,109)  {\sse$ \HK $}
   \put(22.8,27.3){\sse$ c $}
   \put(22.8,58.4){\sse$ c $}
   \put(32,-9.2)  {\sse$ Y^{\!\vee} $}
   \put(36,109)   {\sse$ \HK $}
   \put(46.2,89)  {\sse$ \iHK_Y $}
   \put(46,-9.2)  {\sse$ Y $}
   }
   \put(181,91)   {\catpic}
   }
of dinatural transformations.

\item
We use the endomorphism \QQ\ to define an endomorphism $S_\HK\iN\EndC(K)$ by
   \be
   \SK := (\eps_\HK \oti \id_\HK) \circ \QQ \circ (\id_\HK \oti \Lambda_\HK) \,,
   \labl{S-HK}
where $\eps_\HK$ and $\Lambda_\HK$ are the counit and the two-sided integral of
the Hopf algebra \HK, respectively.

\item
For any object $Y\iN\C$ we use the monodromies $c_{Y,-} \cir c_{-,Y}$ to
define an endomorphism $\QB_Y\iN\EndC(\HK\oti Y)$:
   \eqpic{QHX} {120} {44} { \put(0,1){
   \put(0,0)  {\begin{picture}(0,0)(0,0)
         \scalebox{.38}{\includegraphics{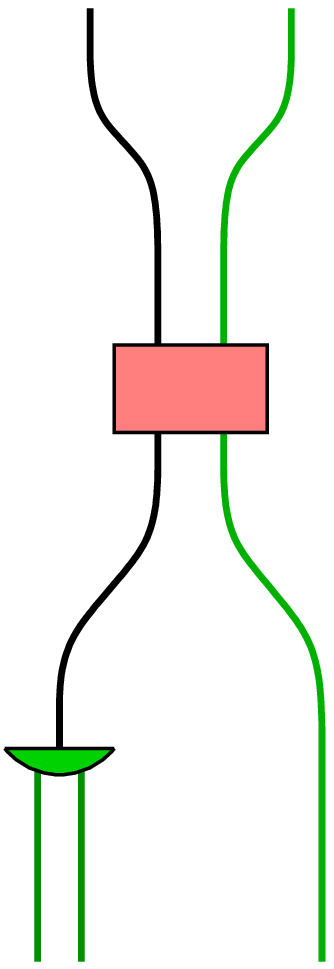}}\end{picture}
   \put(-6.6,60.5){$ \QB_Y $}
   \put(-4,-9.2)  {\sse$ X^{\!\vee} $}
   \put(6.8,109)  {\sse$ \HK $}
   \put(7,-9.2)   {\sse$ X $}
   \put(31.3,-9.2){\sse$ Y $}
   \put(30.4,109) {\sse$ Y $}
   \put(14.3,21.5){\sse$ \iHK_X $}
   }
   \put(60,50)    {$ := $}
   \put(94,0) {\begin{picture}(0,0)(0,0)
         \scalebox{.38}{\includegraphics{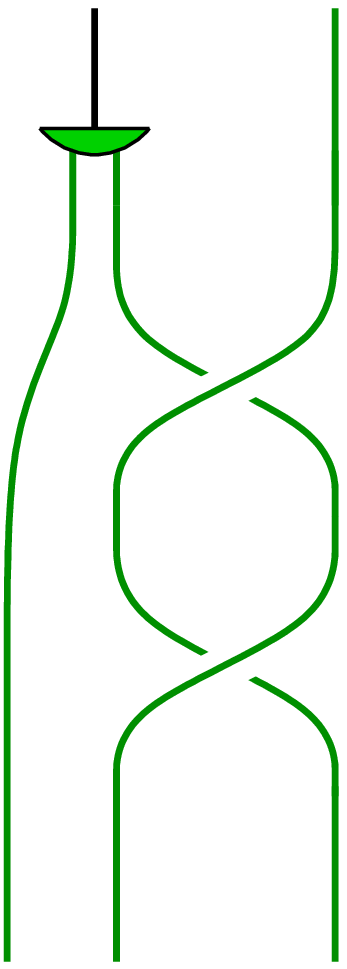}}\end{picture}
   \put(-6,-9.2)  {\sse$ X^{\!\vee} $}
   \put(7.4,109)  {\sse$ \HK $}
   \put(8,-9.2)   {\sse$ X $}
   \put(22.2,27.3){\sse$ c $}
   \put(22.2,58.4){\sse$ c $}
   \put(34,-9.2)  {\sse$ Y $}
   \put(34.3,109) {\sse$ Y $}
   \put(17.8,90)  {\sse$ \iHK_X $}
   } }
   \put(165,93)    {\catpic}
   }

\item
It can now be checked that for any object $Y\iN\C$ the morphism
$\rho_Y^\HK \iN \HomC(\HK\oti Y,Y)$ defined by
  \be
  \rho_Y^K := (\eps_\HK\oti \id_Y)\cir \QB_Y
  \labl{def-rho}
endows $Y$ with the structure of a left \HK-module.
\end{enumerate}

\begin{rem}\label{rem:YD}
Via the duality, the dinaturality morphisms of the coend $K$ provide
a right coaction $(\id_X \oti \iHK_X) \cir (b_X \oti \id_X)$ of $K$ on any
object of \C. This coaction fits together with the action $\rho^K_X$ to
endow the object $X$ with a structure of left-right Yetter-Drinfeld module in
the monoidal category \C. Moreover, any morphism in \C\ is compatible with
this structure, so that we indeed have a fully faithful embedding of \C\ into
the category of left-right Yetter-Drinfeld modules over $K$.
\end{rem}


\subsection{Representations of mapping class groups}\label{ssec:mpg}

One of the main results of \cite{lyub11} is the construction of representations
of mapping class groups for surfaces with holes (i.e., with open disks excised).
Denote by \Mapgn\ the mapping class group of closed oriented surfaces of genus
$g$ with $n$ boundary components.
Various finite presentations of \Mapgn\ have been discussed in the
literature. Since our purpose is to check invariance under the mapping class
group, for us it is sufficient to display a finite set of generators.
One such set of generators arises from the exact sequence
  \be
  1\rightarrow\BG gn\rightarrow\Mapgn\rightarrow \Map g0\rightarrow 1 \,,
  \ee
(compare \cite[Thm.\,9.1]{FAma}), where $\BG gn$ is a central extension of the
surface braid group by $\zet^n$. Owing to this sequence, one can take as a
generating set the union of those for a presentation of \Map g0\ \cite{wajn}
and for a presentation of $\BG gn$ \cite{scotG3};
this particular presentation has been advocated and used in \cite{lyub6,lyub11}.

To describe these generators, we first introduce a collection of cycles
$a_m$, $b_m$, $d_m$, $e_m$ and $t_{j,k}$ on a genus-$g$ surface $\Surf gn$
with $n$ holes.  The following picture indicates these cycles on $\Surf gn$
(see \cite[Figs.\,2\,\&\,6]{lyub6}):
  \Eqpic{surf_gn_PIC} {420} {42} { \put(35,0){ \setlength\unitlength{1.3pt}
    \put(-42,0)  {\INcludepichtft{132f}{65}}
  \put(-23,38)      {\sse$b_1$}
  \put(23,38.5)     {\sse$b_{m{-}1}$}
  \put(49,46)      {\sse$a_m$}
  \put(71,38)      {\sse$b_m$}
  \put(61,54)      {\sse$d_m$}
  \put(61,22)      {\sse$e_m$}
  \put(125,54)     {\sse$S_l$}
  \put(127,38)     {\sse$b_{l}$}
  \put(188,38)     {\sse$b_{k}$}
  \put(253,39)     {\sse$b_{g}$}
  \put(223,26)     {\sse$t_{j,k}$}
  \put(291,80)     {\sse$U_1$}
  \put(307,44)     {\sse$U_j$}
  \put(307,31)     {\sse$U_{j+1}$}
  \put(291,-1)     {\sse$U_n$}
  } }
For brevity we refer to the inverse Dehn twist about any of these
cycles by the same symbol as for the cycle itself. The shaded region in the
picture \eqref{surf_gn_PIC} is a neighborhood of the $l$th handle with the
topology of a one-holed torus; we denote this region by $F_l$, and by $F_l'$ the
slightly smaller neighborhood that is indicated by the dotted line inside $F_l$.
The generators considered in \cite{lyub6,lyub11} are then the following:

\def\leftmargini{1.53em}~\\[-2.65em]\begin{enumerate}\addtolength{\itemsep}{-3pt}
    \item
Braidings $\wi$, for $i \eq 1,2,...\,,n{-}1$, which permute the $i$th and
$i{+}1$\,st boundary circle.
    \item
Dehn twists $\Ri$ about the $i$th boundary circle, for $i \eq 1,2,...\,,n$.
    \item
Homeomorphisms $S_l$, for $l \eq 1,2,...\,,g$, which act as the identity
outside the region $F_l$ and as a modular S-transformation of the one-holed
torus $F_l' \,{\subset}\, F_l^{}$.
    \item
Inverse Dehn twists in tubular neighborhoods of the cycles $a_m$
and $e_m$, for $m \eq 2,3,$ $...\,,g$.
    \item
Inverse Dehn twists in tubular neighborhoods of the cycles $b_m$ and $d_m$,
for $m \eq 1,2,$ $...\,,g$.
    \item
Inverse Dehn twists in tubular neighborhoods of the cycles $t_{j,k}$, for
$j\eq 1,2,...\,,n{-}1$ and $k \eq 1,2,...\,,g$.
\end{enumerate}

\noindent
This system of generators is not minimal. Specifically, the generators $\sk$
can be expressed in terms of the generators $\bk$ and $\dk$. Nevertheless
we keep the $\sk$ in the list, because in the case
$g \eq 1$ and $n \eq 0$, $S \eq \sk$ and $T \eq \dk$ are the usual S- and
T-transformations generating the modular group \slz. As already pointed out,
since the aim of the present article is to determine invariants, it is
sufficient to know the action of some set of generators; relations are not
needed.

\medskip

The representations of our interest involve decorations of the boundary circles
of $\Surf gn$ by (not necessarily distinct) objects $X_1, X_2, ...\,, X_n$ of a
\fftc\ \C. Denote by $\mathfrak N \eq \mathfrak N(X_1{,}...{,} X_n)$ the
subgroup of the symmetric group $\mathfrak S_n$ that is generated by
those permutations $\sigma\iN \mathfrak S_n$ for which $X_i$ and $X_{\sigma(i)}$
are non-isomorphic for at least one value of the label $i$, and set
  \be
  X := \bigoplus_{\sigma\in \mathfrak N}
  X_{\sigma(1)} \oti X_{\sigma(2)} \oti \cdots \oti X_{\sigma(n)} \,.
  \labl{defX}
Then the representation space for \Mapgn\ relevant to us is the vector space
  \be
  V^{X}_{g:n} := \HomC(K^{\otimes g},X)
  \labl{Vpqg}
of morphisms of \C. In line with the designation `handle Hopf algebra' for \HK,
this involves one copy of \HK\ for each handle of $\Surf gn$.

To describe the action of \Mapgn\ on the space $V^{X}_{g:n}$, we introduce the
following collections of morphisms: First, the endomorphisms \LAwi\ with $i \iN
\{1,2,...\,,n{-}1\}$ and \LARj\ with $j \iN \{1,2,...\,,n\}$ of $X$ that act as
  \be
  \bearl
  \LAwi{\big|}_{X_1\otimes X_2\otimes\cdots\otimes X_n}
  := \id_{X_1\otimes\cdots\otimes X_{i-1}} \oti c_{X_i,X_{i+1}}
          \oti\id_{X_{i+2}\otimes\cdots\otimes X_n}
  \qquand \\{}\\[-.6em]
  \LARj{\big|}_{X_1\otimes X_2\otimes\cdots\otimes X_n}
  := \id_{X_1\otimes\cdots\otimes X_{j-1}} \oti \theta_{X_j}
          \oti\id_{X_{j+1}\otimes\cdots\otimes X_n}
  \eear
  \labl{LyubactC1}
on the direct summand $X_1\oti X_2\oti\cdots\oti X_n$ of $X$ and analogously on
the other summands in \erf{defX}, with $c$ and $\theta$ the braiding and the
twist of \C, respectively. Second, the endomorphisms
  \be
  \bearl
  \LASk := \id_{\HK^{\otimes g-k}_{}}\oti\SK\oti\id_{\HK^{\otimes k-1}}
  \,, \\{}\\[-.6em]
  \LAal := \id_{\HK^{\otimes g-l}_{}}
           \oti[\QQ\cir (\TK\oti\TK)]\oti \id_{\HK^{\otimes l-2}_{}}
  \,, \\{}\\[-.6em]
  \LAbk := \id_{\HK^{\otimes g-k}_{}}
           \oti(\SK^{-1}\cir \TK\cir \SK)\oti\id_{\HK^{\otimes k-1}_{}}
  \,, \\{}\\[-.6em]
  \LAdk := \id_{\HK^{\otimes g-k}_{}}\oti\TK\oti\id_{\HK^{\otimes k-1}}
  \,, \\{}\\[-.6em]
  \LAel := \id_{\HK^{\otimes g-l}_{}}\oti \big[ (\TK\oti
           \theta_{\HK^{\otimes l-1}}) \circ \QB_{K^{\otimes l-1}_{}} \big]
  \eear
  \labl{LyubactC2}
of $K^{\otimes g}_{}$, where $k \iN \{1,2,,...\,,g\}$ and $l \iN \{2,3,,...\,,g\}$,
and where $\TK$, $\SK$, \QQ\ and $\QB_Y$ are the morphisms introduced
in Section \ref{subsec:hHa}. And third, for any $j\iN \{1,2,...\,,n{-}1\}$ and
$k \iN \{1,2,...\,,g\}$ the linear map \LAtjk\ that maps any morphism
$f\iN \HomC(K^{\otimes g},X_1\oti\cdots\oti X_n)$ to
  \be
  \bearll
  \LAtjk(f) := \!\!& \big(\, \big[\, ( \id_{X_1\otimes\cdots\otimes X_j}
  \oti \tilde d_{X_{j+1}\otimes\cdots\otimes X_n} )
  \circ
  ( f \oti \id_{\Vee X_n\otimes\cdots\otimes \Vee X_{j+1}} )
  \\{}\\[-.6em]& \quad
  \circ\, \{ \id_{K^{\otimes g-k}_{}} \oti [
  \QB_{K^{\otimes k-1}\otimes \Vee X_n\otimes\cdots\otimes \Vee X_{j+1}} \cir
  (\TK \oti \theta_{K^{\otimes k-1}\otimes \Vee X_n\otimes\cdots\otimes
  \Vee X_{j+1}}) ] \} \,\big]
  \\{}\\[-.8em]& \hspace*{23.9em}
  \otimes\, \id_{X_{j+1}\otimes\cdots\otimes X_n} \,\big)
  \\{}\\[-.6em]& \hspace*{21.8em}
  \circ\, \big(
  \id_{\HK^{\otimes g}_{}} \oti \tilde b_{X_{j+1}\otimes\cdots\otimes X_n} \big)
  \eear
  \labl{LyubactC3}
in $\HomC(K^{\otimes g},X_1\oti\cdots\oti X_n)$ and acts analogously on
morphisms in $\HomC(K^{\otimes g},X_{\sigma(1)} \oti X_{\sigma(2)} $ 
    \linebreak[0]$
{\otimes}\, \cdots \oti X_{\sigma(n)})$ for $\sigma\iN \mathfrak N$. (A
graphical description of the map $\LAtjk$ will be given in picture
\erf{t_act} below.)

Then we can rephrase the results of \cite[Sect.\,4]{lyub6} and
\cite[Sect.\,3]{lyub11} as follows:

\begin{prop}\label{Lyubact_prop}
For any collection of objects $X_i$ of \C\ as above, the linear endomorphisms
of the space $V^{X}_{g:n}$ that act as
  \be
  \pi^{X}_{g:n}(\gamma) := \left\{
  \bearll
  \big( z_\gamma \big)_{*}^{}
  & {\rm for}~~ \gamma \eq \omega_i,\,\Ri
  ~~(\,i \eq 1,2,...\,,n{-}1 ~{\rm resp.}~ i \eq 1,2,...\,,n\,)
  \,, \\{}\\[-.6em]
  \big( z_\gamma \big)^{*}_{}
  & {\rm for}~~ \gamma \eq S_k ~(\,k \eq 1,2,...\,,g\,)
  \\{}\\[-.8em]&
  ~{\rm or}~~ \gamma \eq a_m, b_m, d_m, e_m ~(\,m \eq 1,2,...\,,g
   ~{\rm resp.}~ m \eq 2,3,...\,,g\,)
  \,, \\{}\\[-.6em]
  \LAtjk & {\rm for}~~ \gamma \eq t_{j,k}
  ~(\,j\eq 1,2,...\,,n{-}1 ~{\rm and}~ k \eq 1,2,...\,,g\,)
  \eear \right.
  \labl{piXgn}
generate a projective representation $\pi^{X}_{g:n}$ of the
mapping class group \Mapgn\ on the vector space $V^{X}_{g:n}$.
\end{prop}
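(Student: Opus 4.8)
The plan is to reduce the statement to the original theorems of Lyubashenko~\cite{lyub6,lyub11}, which already assert that a certain collection of morphisms on $V^X_{g:n}$ furnishes a projective $\Mapgn$-representation, and to verify that the operators displayed in \erf{piXgn} coincide with Lyubashenko's generators under the dictionary set up in Sections~\ref{subsec:hHa} and~\ref{ssec:mpg}. Concretely, one first notes that the representation space \erf{Vpqg} is exactly the one appearing in \cite{lyub6}: it is built from one copy of the handle Hopf algebra $\HK$ per handle, with the boundary circles decorated by the object $X$ of \erf{defX}, the direct sum over $\mathfrak N$ being forced by the fact that a braiding generator $\omega_i$ relates the summands $X_{\sigma(1)}\oti\cdots\oti X_{\sigma(n)}$ and $X_{(\sigma\circ(i\,i{+}1))(1)}\oti\cdots$. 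Thus no new space needs to be constructed; only the action has to be matched.

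**Matching the generators.** The core of the proof is then a generator-by-generator identification. For the boundary braidings $\omega_i$ and boundary Dehn twists $\Ri$, the maps $(z_{\omega_i})_*$ and $(z_{\Ri})_*$ are post-composition with the endomorphisms \LAwi\ and \LARj\ of $X$ given in \erf{LyubactC1}; these are manifestly Lyubashenko's operators for a twist of a boundary circle and for an interchange of two boundary circles, since in his picture these moves act on the ``legs'' of $X$ by the braiding and twist of \C. For the handle generators $S_k, a_m, b_m, d_m, e_m$, the maps $(z_\gamma)^*$ are pre-composition with the endomorphisms \LASk, \LAal, \LAbk, \LAdk, \LAel\ of $\HK^{\otimes g}$ assembled in \erf{LyubactC2} out of $\TK$, $\SK$, $\QQ$ and $\QB_Y$; here one invokes that, by construction, $\TK$ implements the twist on a handle (cf.~\erf{p9}), $\QQ$ the half-monodromy linking two handles (cf.~\erf{QHH}), $\SK$ the modular $S$-move on a one-holed torus handle (cf.~\erf{S-HK}), and $\QB_Y$ the half-monodromy of a handle past a decorated circle (cf.~\erf{QHX}), which are precisely the data entering Lyubashenko's formulas for the Dehn twists about $b_m,d_m$, the $S$-homeomorphism $S_l$ on $F_l'$, and the twists about $a_m,e_m$. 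Finally, for $t_{j,k}$ one checks that the elaborate expression \erf{LyubactC3} is Lyubashenko's operator for the Dehn twist about the cycle $t_{j,k}$ running from the $k$-th handle around the holes $j{+}1,\dots,n$: it is obtained by bending down those legs via the coevaluation $\tilde b$, applying the handle-past-circle half-monodromy $\QB$ together with the twist $\theta$ and $\TK$ to the combined object $\HK^{\otimes k-1}\oti{}^{\vee\!}X_n\oti\cdots\oti{}^{\vee\!}X_{j+1}$, and closing up with $\tilde d$ — exactly the graphical recipe of \cite[Figs.\,2\,\&\,6]{lyub6}, which will be reproduced in picture \erf{t_act}.

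**What remains and the main obstacle.** Once the identification of generators is in place, projectivity and well-definedness of $\pi^X_{g:n}$ follow from the corresponding statements in \cite[Sect.\,4]{lyub6} and \cite[Sect.\,3]{lyub11}: the relations among the chosen generators of \Mapgn\ (the Wajnryb presentation of $\Map g0$ together with Scott's presentation of the surface braid group $\BG gn$, glued along the exact sequence $1\to\BG gn\to\Mapgn\to\Map g0\to1$) are represented up to scalars by Lyubashenko's morphisms, and our operators are literally those morphisms. The one point that requires genuine care — and is the main obstacle — is bookkeeping: making sure that the index conventions, the left-versus-right placement of tensor factors $\id_{\HK^{\otimes g-k}}\oti(\,\cdot\,)\oti\id_{\HK^{\otimes k-1}}$, the choice of duals ${}^{\vee\!}X$ versus $X^\vee$, and the orientation of Dehn twists (inverse twists, as stipulated after \erf{surf_gn_PIC}) all agree with \cite{lyub6,lyub11}, so that the cited relations transport without sign or ordering discrepancies. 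Since the present article only needs a generating set and not the relations, no independent verification of the relations is required here; the content of the Proposition is precisely that \erf{LyubactC1}–\erf{LyubactC3} is a faithful transcription of Lyubashenko's construction into the language of coends and half-monodromies set up above.
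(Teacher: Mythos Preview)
Your approach is correct and matches the paper's treatment: the paper does not supply an independent proof of this proposition but introduces it with the phrase ``we can rephrase the results of \cite[Sect.\,4]{lyub6} and \cite[Sect.\,3]{lyub11} as follows'', treating it as a direct restatement of Lyubashenko's theorems in the coend language of Section~\ref{subsec:hHa}. The only substantive comment the paper adds is the remark immediately following the proposition, namely that source and target in $V^X_{g:n}$ (and hence pre- versus post-composition) are interchanged relative to \cite{lyub6,lyub11}, so that the morphisms $z_\gamma$ appearing here are the inverses of Lyubashenko's; your ``bookkeeping'' paragraph correctly anticipates exactly this kind of convention-matching as the only real content.
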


\begin{rem}
In \cite{lyub6,lyub11} the role of source and target in the vector space
$V^{X}_{g:n}$ \erf{Vpqg}, and correspondingly the role of pre- and
post-composition in the formulas \erf{piXgn}, is interchanged.
Accordingly in our description the inverses of the morphisms $z_\gamma$
used in \cite{lyub6,lyub11} appear.
\end{rem}

\begin{rem}\label{rem:pq}
In applications (for details see Remark \ref{rem:pq2} below) one is also
interested in the following variant. Partition the set of boundary circles
of the surface into two subsets of sizes
$p$ and $q$ and denote by \Mapgpq\ the subgroup of the mapping class group
\Mapgppq\ that leaves each of these two subsets separately invariant.
Further, denote the corresponding decorations by $X_1, X_2, ...\,, X_p$ and
by $Y_1, Y_2, ...\,, Y_q$, respectively, and define objects $X$ and $Y$
analogously as in \erf{defX}. Finally note that the right duality of \C\
provides a linear isomorphism
  \be
  \varphi:\quad \HomC(K^{\otimes g}{\otimes}\,Y,X)
  \stackrel\cong\longrightarrow \HomC(K^{\otimes g},X\oti Y^\vee) \,.
  \ee
Then
  \be
  \pi^{Y,X}_{g,p,q}(\gamma)
   := \varphi^{-1} \circ \pi^{X\otimes Y^\vee}_{g,p+q}(\gamma) \circ \varphi
  \labl{piXYgpq}
defines a \rep\ $\pi^{Y,X}_{g,p,q}$ of the group \Mapgpq\ on the space
$\HomC(K^{\otimes g}{\otimes}\,Y,X)$.
\end{rem}


\section{Mapping class group invariants}\label{sec:thm}

We now consider the category $\C \eq \HBimod$ of
finite-dimensional bimodules over a factorizable ribbon Hopf algebra $H$.
Recall that there is a canonical Hopf algebra $K$ in \HBimod, obtainable as the
coend \erf{defK} for \HBimod. Its bimodule structure is given in \erf{defHaa};
for a detailed description of the structure morphisms of $K$ as a Hopf algebra,
see \cite[App.\,A.3]{fuSs3}. Also recall the action $\pi_{g:n}^X$ of \Mapgn\
described in Proposition \ref{Lyubact_prop}.

Our goal is to provide, given the Hopf algebra $H$ in \Vectk, and thus the
Hopf algebra $K$ in \HBimod, the following collection of data:

\begin{itemize}

\item
An object $F$ in the category \HBimod\ that carries a natural structure of
a commutative symmetric Frobenius algebra.

\item
For any choice of non-negative integers $g$ and $n$ a morphism
  \be
  \Cor gn \,\in\, \HomHH(K^{\otimes g}, F^{\otimes n})
  \labl{taskCor}
that is invariant under the action $\pi_{g:n}^X$ of \Mapgn\ with
$X \eq F^{\otimes n}$ (which corresponds to taking
$X_1 \eq \cdots \eq X_n \eq F$ as objects in formula \erf{defX}).

\end{itemize}

\begin{rem}
Any such object $F$ is a candidate for a \emph{space of bulk states} in a
conformal quantum field theory whose chiral data are described by the category
\HMod\ of $H$-modules. The morphisms $\Cor gn$ are then candidates for modular
invariant bulk correlation functions of the conformal field theory, for world
sheets of any genus $g$ and for any number $n$ of bulk field insertions. The
classification of spaces of bulk states for given chiral data and
the construction of correlation functions for a given space of bulk states
are two of the most desirable issues in the study of conformal field theory.
 \\
In order to allow for a consistent interpretation as a partition function,
$\Cor 10$, i.e.\ the zero-point correlator on a torus, should be non-zero.
\end{rem}

It is worth stressing that the object $F$ is by no means uniquely determined
by the existence of invariants \erf{taskCor}. Indeed a whole family
$\{\Fomega\}$ of commutative symmetric Frobenius algebras in \HBimod\ that
are candidates for such an object, as well as the corresponding invariant
morphisms for the case that $g\eq 1$ and $n \,{\in}\, \{0,1\}$,
have already been obtained in \cite{fuSs3}. Here $\omega$ is
any choice of a ribbon automorphism of the Hopf algebra $H$. Moreover,
for the case of the identity automorphism it was shown \cite{fuSs4} that
the resulting invariant for $g \eq 1$ and $n \eq 0$ is non-zero.

The main result of the present paper is that each of these $H$-bimodules
$\Fomega$ indeed has all the desired properties: for any choice of ribbon
automorphism $\omega$ we are able to provide the morphism $\Cor gn$ and
establish its \Mapgn-invariance for arbitrary integers $g,n \,{\ge}\, 0$.
On the other hand, generically the family $\{\Fomega\}$
can \emph{not} be expected to
exhaust all solutions to the problem posed above; our results
do not suggest any concrete approach to such a classification.

\medskip

As it turns out, the presence of a general ribbon automorphism $\omega$
only constitutes a minor modification of the issues that already arise
in the case of the identity automorphism. Accordingly the bimodule of central
importance for our discussion is $F^{\!\idsm_H}$, the one obtained when
$\omega \eq \id_H$. Henceforth we slightly abuse notation and simply use
the symbol $F$ for this object. For the family $\{\Fomega\}$ obtained in
\cite{fuSs3}, setting $\omega \eq \id_H$ yields the \emph{coregular bimodule}
in \HBimod. By this we mean the dual of the regular bimodule $(H,m,m)$, i.e.\
the vector space \Hs\ endowed with the dual of the regular left and right
actions of $H$ on itself. Explicitly,
   \be
   \Hb = (\Hs,\brho,\bohr)
   \ee
with $\brho \iN \Hom(H\oti\Hs,\Hs)$ and $\bohr \iN \Hom(\Hs\oti H,\Hs)$ given by
   \be
   \bearl
   \brho:= (d_H\oti\idHs) \cir (\idHs\oti m\oti\idHs) \cir (\idHs\oti\apo\oti b_H)
   \cir\tauHHv \qquand
   \nxl3
   \bohr:= (d_H\oti\idHs)\cir(\idHs\oti m\oti\idHs)
   \cir(\idHs\oti\id_H\oti\tauHvH)\cir(\idHs\oti b_H \oti\apoi) \,.
   \eear
   \labl{rhorho}
It has been demonstrated \cite{fuSs3} that the coregular bimodule $F$ carries a
natural structure of a commutative symmetric Frobenius algebra in the ribbon
category \HBimod. Moreover, $F$ can be characterized as the coend of a suitable
functor from $H\Mod\op \Times H\Mod$ to \HBimod. This way the structural
morphisms endowing $F$ with the structure of an algebra in \HBimod\ can be
obtained with the help of the universal property of this coend. In contrast,
the coalgebra structure of $F$ involves the integral and cointegral of $H$.
Specifically, the product $m_F$ is the dual of the coproduct of $H$ and the
unit $\eta_F$ is the dual of the counit of $H$, in the coproduct $\Delta_F$
the cointegral $\lambda\iN H^*$ enters, and the counit $\eps_F$ is the dual
of the integral $\Lambda\iN H$. Explicitly, in graphical notation we have
   \Eqpic{pic-Hb-Frobalgebra} {440} {47} { \put(0,-3){
   \put(0,45)      {$ m\bico~= $}
     \put(48,0)  {\Includepichtft{79a}
   \put(-5.9,-8.8) {\sse$ \Hss $}
   \put(6.5,-8.8)  {\sse$ \Hss $}
   \put(31.5,34.7) {\sse$ \Delta $}
   \put(49.7,106.8){\sse$ \Hss $}
   }
   \put(142,45)    {$ \eta\bico~= $}
     \put(185,24) {\Includepichtft{81j}
   \put(-5.9,23.6) {\sse$ \eps $}
   \put(10.7,44.1) {\sse$ \Hss $}
   }
   \put(243,45)    {$ \Delta\bico~= $}
     \put(291,0) {\Includepichtft{82a}
   \put(-4.3,-8.8) {\sse$ \Hss $}
   \put(11.1,26.8) {\sse$ \Delta $}
   \put(17.7,43.8) {\sse$ \apo $}
   \put(21.5,71)   {\sse$ \lambda $}
   \put(32.8,59.3) {\sse$ m $}
   \put(48.2,89.4) {\sse$ \Hss $}
   \put(61.1,89.4) {\sse$ \Hss $}
   }
   \put(395,45)    {$ \eps\bico~= $}
     \put(438,24) {\Includepichtft{82b}
   \put(-4.3,-8.8) {\sse$ \Hss $}
   \put(15.8,16.3) {\sse$ \Lambda $}
   } } }

We are now in a position to give our result for the morphisms $\Cor gn$ in the
category $\C \eq \HBimod$. We first present $\Cor gn$ in purely categorical
terms: as an element of the morphism space
$\Hom_\C(K^{\otimes n},F^{\otimes n})$ of the category \C\ it is given by
  \eqpic{Sk_morph} {180} {148} { \put(0,13){
  \put(0,129)      {$\Cor gn :=~$}
    \put(60,0) {\Includepichtft{132a}
  \put(-8.7,-12)   {\sse$ \underbrace{\hspace*{7.1em}}_{g~ \rm factors} $}
  \put(28.3,288)   {\sse$ \overbrace{\hspace*{7.6em}}^{n~ \rm factors} $}
  \put(-5,-9.2)    {\sse$ K $}
  \put(8.8,-9.2)   {\sse$ K $}
  \put(27,-9.2)    {\sse$ \cdots $}
  \put(49.2,-9.2)  {\sse$ K $}
  \put(27.2,278)   {\sse$ F $}
  \put(49.5,278)   {\sse$ F $}
  \put(66,278)     {\sse$ \cdots $}
  \put(86,278)     {\sse$ F $}
  \put(65,176)     {\sse$ \rho^K_F $}
  \put(78.6,198)   {\sse$ m_F^{} $}
  \put(71.3,219)   {\sse$ \Delta_F $}
  \put(79.8,12.2)  {\sse$ \eta_F^{} $}
  }
  \put(186,256)    {\catpic}
  } }
for $n \,{>}\, 0$, and as $\Cor g0 \,{:=}\, \eps_F \cir \Cor g1$.

\medskip

Let us describe the rationale for arriving at this expression for $\Cor gn$.

\def\leftmargini{1.57em}~\\[-2.65em]\begin{enumerate}\addtolength{\itemsep}{-3pt}
    \item
Draw a skeleton for the surface $\Surf g{n}$, including outward-oriented edges
attached to the boundary components, and label each edge of this skeleton
with the Frobenius algebra $F$ in \C. (Instead of with edges, a priori one may 
want to work with ribbons, but owing to $\theta_F \eq \id_F$ this is insignificant.)
    \item
Orient the internal edges in such a way that each of the vertices of the
skeleton has either two incoming and one outgoing edge or vice versa.
Then label each vertex with the product $m_F$ or coproduct $\Delta_F$ of $F$,
depending on whether two or one of its edges are incoming.
    \item
Further, for each handle of the surface, attach another edge
labeled by the handle Hopf algebra $K \iN \C$ to the corresponding
loop of the skeleton, and label the resulting new trivalent vertex
by the action $\rho^K_F$, as defined in \erf{def-rho}, of the Hopf algebra $K$
on the object in \C\ underlying the Frobenius algebra $F$.
    \item
The resulting graph is interpreted as a morphism in \C. Using the fact that
the algebra $F$ is commutative symmetric Frobenius, the vertices can be
rearranged in such a way that one ends up with the morphism given in
\erf{Sk_morph}.
\end{enumerate}

\noindent
Our main result can now be stated as follows:

\begin{thm}\label{thm:main}
Let $\C \eq \HBimod$ for $H$ a finite-dimensional factorizable ribbon Hopf
algebra. Then for any pair of integers $g,n \,{\ge}\,0$ the morphism $\Cor gn$
is invariant under the action $\pi_{g:n}^{F^{\otimes n}}$ of the mapping class
group $\Mapgn$ described in Proposition \ref{Lyubact_prop}.
\end{thm}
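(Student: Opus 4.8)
The plan is to verify invariance of $\Cor gn$ generator by generator, using the list of generators from Proposition~\ref{Lyubact_prop} and exploiting the commutative symmetric Frobenius structure of $F$ together with the $K$-module structure $\rho^K_F$. First I would reduce the problem to $n>0$; the case $n=0$ then follows because $\Cor g0 = \eps_F \circ \Cor g1$ and all generators for $\Mapgn$ with $n=0$ act through the $K^{\otimes g}$-leg, on which $\eps_F$ has no bearing. For $n>0$, the generators split into three families: (i) the boundary braidings $\omega_i$ and boundary Dehn twists $\vartheta_i$, which act on the $F^{\otimes n}$ output leg via $c_{F,F}$ and $\theta_F$; (ii) the handle generators $S_k$, $a_m$, $b_m$, $d_m$, $e_m$, which act on the $K^{\otimes g}$ input leg via $T_K$, $S_K$, $\mathcal O_K$, $\mathcal Q^K_Y$; and (iii) the mixed generators $t_{j,k}$, which entangle a $K$-leg with (the duals of) output legs.

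For family (i) the key inputs are that $F$ is \emph{commutative} (so $c_{F,F}$ is absorbed into $m_F$, hence the symmetric comultiplication pattern in \erf{Sk_morph} is stable under permuting output legs) and \emph{symmetric} with $\theta_F = \id_F$ (so each $\vartheta_i$ acts trivially); a short graphical manipulation reshuffling the $\Delta_F$-tree then closes this case. For family (ii) the heart of the matter is that $\rho^K_F$ makes $F$ a $K$-module, so that the morphism \erf{Sk_morph} can be re-read as: apply the $K$-action to $F^{\otimes n}$ through the comultiplication tree of $K$ followed by the Frobenius structure of $F$. Invariance under $T_K$, $S_K$, and the monodromy endomorphisms should then reduce to identities satisfied \emph{inside} the handle Hopf algebra $K$ acting on any $K$-module — most crucially, that the two-sided integral $\Lambda_K$ of $K$ is what makes $S_K$ act the way it does, and that acting with $\Lambda_K$ followed by $\rho^K_F$ factors through $\eta_F \circ \eps_F$-type projectors. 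Here the explicit coalgebra formulas in \erf{pic-Hb-Frobalgebra}, in particular the appearance of the cointegral $\lambda \in H^*$ in $\Delta_F$ and the integral $\Lambda \in H$ in $\eps_F$, together with the identities \erf{fQS_Psi}, are exactly the data needed to push the $K$-action through the Frobenius structure; so I expect the proof to consist of a fairly long chain of graphical lemmas establishing compatibility of $\rho^K_F$ with $m_F$, $\Delta_F$, $\eta_F$, $\eps_F$ in the presence of the relevant integrals. (This is presumably the content of Section~\ref{sec:lemmata}.)

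For family (iii), the generators $t_{j,k}$ involve the duality maps $\tilde b, \tilde d$ on the output legs and the endomorphism $\mathcal Q^K_Y$ with $Y$ a tensor product of dual output objects; here I would use that $F$ is symmetric Frobenius (hence self-dual in a canonical way, $F^\vee \cong F$ as Frobenius algebras via the Frobenius form) to trade the dualized output legs for ordinary $F$-legs, after which $t_{j,k}$ becomes a statement of the same type as the family-(ii) identities combined with a Frobenius-algebra move (Frobenius associativity / the snake identity for $F$). The overall logical structure is thus: establish the $K$-module compatibilities as lemmas, establish the self-duality and $\theta_F = \id_F$ facts (cited from \cite{fuSs3}), and then for each generator perform the corresponding graphical simplification.

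The main obstacle I anticipate is family (ii), specifically the $S$-type and handle generators: these require showing that the action of $\Lambda_K$ and of the monodromy $\mathcal O_K$ on the $F$-legs, post-composed with the Frobenius structure, reproduces the original morphism. This is where factorizability of $H$ is genuinely used — it is precisely what makes $\Lambda_K$ behave well and what makes the Drinfeld-map identities \erf{fQS_Psi} available — and it is the step that does not have an obvious purely categorical analogue (indeed the introduction flags exactly this: ``what we are missing is a corresponding structure of the \emph{category} \HMod\ that comes from the integral of $H$''). So I would expect the bulk of the technical work, and the only genuinely delicate computations, to lie in verifying $S_k$-, $a_m$-, $b_m$-, $d_m$-, $e_m$-invariance, with the boundary generators and $t_{j,k}$ being comparatively routine once the $K$-module and self-duality facts are in hand. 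Finally, the extension from $\omega = \id_H$ to general ribbon $\omega$ should, as the text anticipates, amount to inserting the group-like twisting element associated to $\omega$ consistently and checking it commutes past every move — a bookkeeping addition rather than a new idea.
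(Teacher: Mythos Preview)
Your generator-by-generator strategy is exactly the paper's, and your treatment of the boundary generators is right (though for $\omega_i$ the relevant property is \emph{cocommutativity} of $F$, since $\omega_i$ postcomposes $\Delta_F^{(n)}$ with a braiding; this follows from commutativity plus symmetry, so your conclusion stands). Where your outline diverges from the paper is in emphasis and in the mechanisms you expect to carry the weight.

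For the handle generators you suggest the invariance should ``reduce to identities satisfied inside the handle Hopf algebra $K$ acting on any $K$-module''. This is too optimistic: the invariance is \emph{not} a general $K$-module fact, and the paper does not attempt any such reduction. Instead the key organizing move is to write $\Corr g11$ in closed form as a linear map built from iterated copies of the integral $\Lambda\in H$ and the right-adjoint action (formula \erf{CorrgnH}), then to check $S_K$- and $T_K$-invariance directly in the genus-one one-point case (Lemma \ref{lem:101S,101T}) and propagate via the factorization $\Corr gpq = \Delta_F^{(p)}\circ \Corr g11 \circ(\id_{K^{\otimes g}}\otimes m_F^{(q)})$. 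The $a_k$ and $e_k$ generators then need their own separate Hopf-algebraic identities (Lemmas on $Q$--$\Delta$ commutation such as \erf{Pic_QQ3}, and the key ``$H$ past loops'' Lemma \ref{lem:Hpastloops}), none of which are consequences of the $K$-module axiom alone.

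Your assessment that $t_{j,k}$ is ``comparatively routine'' is the point I would push back on most. In the paper this case absorbs a substantial share of the technical work: Lemma \ref{lem:Hpastloops}, Lemma \ref{lem:removeQq_t} (showing the partial monodromy $\QB_{K^{\otimes m-1}\otimes{}^\vee\!F}$ can be dropped, via the nontrivial identity \erf{Q3Delta}), Lemma \ref{lem:twist_S_remove}, and finally Lemma \ref{lem:t_act_Cor}. Your proposed route via the Frobenius self-duality $F^\vee\cong F$ is a reasonable heuristic for reorganizing the picture, but it does not by itself eliminate the partial monodromy $\QB$; the paper instead pushes $\QB$ through the $\Delta_F$-tree using naturality and then kills it with a direct computation that again hinges on $\Lambda$ and the $Q$-identities. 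So the balance of difficulty is rather the reverse of what you anticipate: the $S_k,b_k,d_k$ cases are essentially the $g{=}1$ result already established in \cite{fuSs3}, while the genuinely new work sits in $a_k$, $e_k$, and especially $t_{j,k}$.
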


\medskip

\begin{rem}\label{rem:pq2}
As already pointed out, a major motivation for our investigations are
applications to conformal field theory. In that context, the morphisms $\Cor gn$
describe correlation functions of bulk fields.  As such they not only
have to be invariant under the relevant action of the mapping class group,
but must also satisfy so-called factorization constraints
(for a precise formulation in the semisimple case see \cite{fjfrs}). The latter
constraints relate correlators for surfaces of different topology and
necessarily involve  correlators with (using quantum field theory terminology)
both incoming and outgoing field insertions. We do not have anything to say
about factorization constraints in this paper, but including the possibility
of having both incoming and outgoing insertions is
easy. Indeed, for any choice of non-negative integers $g$, $p$ and $q$,
according to Remark \ref{rem:pq} we have a \rep\ of \Mapgpq\ on the space
$\HomHH(K^{\otimes g}\oti F^{\otimes q}, F^{\otimes p})$. A morphism
  \be
  \Corr gpq\in \HomHH(K^{\otimes g}\oti F^{\otimes q}, F^{\otimes p})
  \ee
that is invariant under this action is then obtained as follows. We have
$\Corr gp0 \eq \Cor gp$ as given in \erf{Sk_morph}, while for $q \,{>0}$ the
morphism $\Corr gpq$ is obtained from $\Cor gp$ by just replacing the unit
$\eta_F \iN \HomHH(\one,F) \,{\equiv}\, \HomHH(F^{\otimes 0},F)$
by the morphism in $\HomHH(F^{\otimes q},F)$ that is given by 
a $q{-}1$-fold product of $F$.
\end{rem}

\begin{rem}
As already pointed out, we actually find a family of suitable objects $\Fomega$,
labeled by ribbon automorphisms $\omega$ of $H$, and provide invariant vectors 
$\Corw g {p,q}$ in the corresponding morphism spaces $\HomHH\big(K^{\otimes g}
\oti (\Fomega)^{\otimes q}_{}, (\Fomega)^{\otimes p}_{}\big)$ for all values 
of $g$, $p$ and $q$.  Thus for any ribbon automorphism $\omega$ we obtain 
candidates for the bulk state space and for bulk correlation functions in 
conformal field theory. For brevity we have
concentrated above to the case of the identity automorphism.
Sections \ref{sec:lemmata} and \ref{sec:proofmain} below contain
the proof of our main result for $\omega \eq \id_H$, while the proof for
the general case will be accomplished in Section \ref{sec:omega}.
\end{rem}

The picture \erf{Sk_morph} represents $\Cor gn$ as a morphism of
the braided tensor category of $H$-bi\-modules. We conclude this section
by expressing $\Corr gpq$ as a \ko-linear map, thereby obtaining a pictorial
description in the category of vector spaces. To this end we insert
explicit expressions for all the structural morphisms appearing in
\erf{Sk_morph}. The expressions for the structural morphisms of $F$
have already been displayed in the picture \erf{pic-Hb-Frobalgebra}.
It therefore suffices to describe in addition the morphisms $\Corr g11$
with one incoming and one outgoing insertion of $F$.

Let us first consider the case $g \eq p \eq 1$ and $q \eq 0$:

\begin{lem}
The morphism $\Corr 110 \,{\equiv}\, \Cor 11$ satisfies the following chain
of equalities of linear maps:
  \Eqpic{PF_1} {440} {57} {\setulen95
  \put(-3,59)      { $ \Cor 11 ~= $ }
  \put(63,0)  {\INcludepichtft{123f_2}{361}
  \put(-5.5,-9.2)  {\sse$ \Hss $}
  \put(14.4,-9.2)  {\sse$ \Hss $}
  \put(25.2,41)    {\sse$ Q^{-1} $}
  \put(33.5,15)    {\sse$ Q $}
  \put(61,118.7)   {\sse$ \lambda $}
  \put(73,139)     {\sse$ \Hss $}
  }
  \put(163,59)     {$=$}
  \put(199,0) {\INcludepichtft{123h_1A}{361}
  \put(-4.5,-9.2)  {\sse$ \Hss $}
  \put(28,-9.2)    {\sse$ \Hss $}
  \put(14.5,33)    {\sse$ Q^{-1} $}
  \put(47.5,10.2)  {\sse$ Q $}
  \put(45.1,118.5) {\sse$ \lambda $}
  \put(62,139)     {\sse$ \Hss $}
  }
  \put(271,59)     {$=$}
  \put(307,0) {\INcludepichtft{123i_1}{361}
  \put(-5,-9.2)    {\sse$ \Hss $}
  \put(36,-9.2)    {\sse$ \Hss $}
  \put(20,19)      {\sse$ Q^{-1} $}
  \put(55.6,19)    {\sse$ Q $}
  \put(59.7,78.5)  {\sse$ \lambda $}
  \put(32,139)     {\sse$ \Hss $}
  }
  \put(390,59)    {$=$}
  \put(422,0) {\INcludepichtft{123k}{361}
  \put(-5,-9.2)    {\sse$ \Hss $}
  \put(8.2,-9.2)   {\sse$ \Hss $}
  \put(36,10.5)    {\sse$ \Lambda $}
  \put(49.5,139)   {\sse$ \Hss $}
  } }
Here $Q$, $\lambda$ and $\Lambda$ are the monodromy matrix, the cointegral and
the integral of $H$, respectively.
\end{lem}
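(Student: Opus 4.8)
The plan is to start from the categorical expression \erf{Sk_morph} specialized to $g\eq n\eq 1$ and substitute the explicit morphisms. First I would insert the definition \erf{def-rho} of the $K$-action $\rho^K_F \eq (\eps_K\oti\id_F)\cir\QB_F$ together with the explicit form of $\QB_F$ from \erf{QHX}; since $K\eq\Haa$ has underlying space $\Hs\otik\Hs$, the dinatural family of the coend and the braiding \erf{bibraid} of \HBimod\ turn $\QB_F$ into a concrete linear map built from the product, coproduct, antipode and R-matrix of $H$. Feeding in $m_F$, $\Delta_F$, $\eta_F$ and $\eps_F$ as displayed in \erf{pic-Hb-Frobalgebra}, and recalling that the coproduct $\Delta_F$ carries the cointegral $\lambda$ while $\eps_F$ is dual to the integral $\Lambda$, one arrives at the first picture: two strands labelled $\Hss$ entering, one monodromy $Q^{-1}$ acting from the left and one $Q$ from the right (this is the combination coming from the braiding on bimodules, where $R^{-1}$ acts on the left and $R$ on the right), the antipodes hidden in the conventions, a multiplication $m$, and the cointegral $\lambda$ producing the outgoing $\Hss$. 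This establishes the first expression as merely a bookkeeping rewrite of \erf{Sk_morph}.

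The three subsequent equalities are then obtained by elementary graphical manipulations internal to the Hopf algebra $H$ in \Vectk. The key input is the string of identities \erf{fQS_Psi}, which relate the configuration ``$Q$ -- antipode -- $m$ -- $\lambda$ -- $Q^{-1}$'' to the much simpler ``$\Lambda$ -- $\Delta$''. Concretely, I would: (i) use naturality/dinaturality to slide the $Q^{-1}$ that acts on the first strand past the coproduct and multiplication, bringing the two monodromy insertions adjacent, which gives the second picture; (ii) use the quasitriangularity axioms \erf{deqf-qt} (the hexagon-type relations $(\Delta\oti\id)\cir R \eq R_{13}R_{23}$, etc.) to rearrange the braidings so that the two monodromies sit symmetrically below a single multiplication, producing the third picture; (iii) finally apply \erf{fQS_Psi} itself --- with the normalization $f_Q(\lambda)\eq\Lambda$ and $\lambda\cir\Lambda\eq1$ already fixed in the text --- to collapse the ``$Q^{-1}\text{--}\apo\text{--}m\text{--}\lambda\text{--}Q$'' block into the integral $\Lambda$, yielding the last picture in which $\Lambda$ enters and the outgoing $\Hss$ comes directly from the remaining multiplication structure.

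Throughout, the manipulations only use: associativity and coassociativity of $H$; the defining property of the antipode; the fact that $H$ is unimodular (so $\Lambda$ is two-sided) and factorizable (so \erf{fQS_Psi} holds with the stated normalization); and the compatibility of $Q$ with $\Delta$. No relations in the mapping class group are needed --- this lemma is purely a computation of the morphism $\Cor11$ in four equivalent forms, to be used later as a normal form.

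I expect the main obstacle to be step (ii): tracking exactly how the two monodromy matrices $Q^{-1}$ and $Q$, which originate from \emph{different} strands of the bimodule $K\eq\Hs\otik\Hs$ and act on opposite sides, can be brought into the symmetric position required for \erf{fQS_Psi} to apply. This requires careful use of the coassociativity of $\Delta$ together with both halves of \erf{deqf-qt} and a naturality move past the antipode, and it is easy to lose track of which leg of $R$ or $R_{21}$ ends up where; the graphical calculus makes this manageable but the bookkeeping is the delicate part. Once the symmetric form is reached, the final collapse via \erf{fQS_Psi} is immediate.
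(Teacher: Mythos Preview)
Your outline gets the endpoints right --- the first picture is indeed obtained by substituting the explicit bimodule formulas into \erf{Sk_morph}, and the last collapse uses \erf{fQS_Psi} --- but your proposed mechanism for the two intermediate equalities diverges from the paper's proof and is where you are most at risk of getting lost.

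For the second equality the paper does not slide $Q^{-1}$ via dinaturality. What happens instead is purely an antipode manipulation: one repeatedly uses that $\apo$ is an anti-algebra and anti-coalgebra morphism to push antipodes through the products and coproducts until the picture has the shape displayed. No monodromy matrix moves at this stage.

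For the third equality the paper does not invoke the hexagon relations \erf{deqf-qt}. The conceptual point --- already stated in the paper just after the definition of the Drinfeld map --- is that $f_{Q^{-1}}$ intertwines the left-coadjoint action of $H$ on $H^*$ with the left-adjoint action of $H$ on itself. After the antipode moves of the previous step, one can recognise in the second picture the left-adjoint action on the right leg of $Q^{-1}$; the intertwining property then converts this directly into the third picture. This is a single conceptual step, not a rearrangement of $R$-matrix legs.

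Your worry about step (ii) is thus well-founded, but for the wrong reason: trying to force the two monodromies into a symmetric position via \erf{deqf-qt} is the hard road. The paper's route avoids precisely this bookkeeping by recognising the adjoint/coadjoint intertwining. For the final step the paper also records the auxiliary facts $(\apo\oti\apo)\cir Q \eq \tauHH\cir Q$ and $\apo\cir\Lambda \eq \Lambda$ (unimodularity), which you should list alongside \erf{fQS_Psi}.
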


\begin{proof}
Insert the expressions for $m_F$, $\eta_F$ and $\Delta_F$ as well as for the
\HK-action $\rho^K_F$ (with braidings according to \erf{bibraid} and with the
formula for $\rho^K_X$ that we will present for a general $H$-bimodule $X$ in
\erf{Lyubact_HKH} below) into \erf{Sk_morph} with $g\eq n\eq 1$. Then by using
associativity of the product $m$ of $H$, we arrive at the first picture in the
chain \erf{PF_1} of equalities. The second equality follows by using several
times the anti-(co)algebra property of the antipode $\apo$ of $H$.
In the resulting morphism we can recognize the left-adjoint $H$-action
on the right leg of the inverse monodromy matrix $Q^{-1}$. The third
equality is then just the statement that the morphism $f_{Q^{-1}}$
intertwines the left-adjoint and left-coadjoint actions. The last
equality follows with the help of the identity \erf{fQS_Psi} together with
$(\apo\oti\apo)\cir Q \eq \tauHH\cir Q$, the anti-coalgebra property
of the inverse antipode and the fact that, by unimodularity of $H$,
 $\apo\cir\Lambda \eq \Lambda$.
\end{proof}

To proceed to $\Corr 111$ we simply note that, by just using the unit property
of $\eta_F$ and the Frobenius property and associativity of $F$, we have
  \be
  \Corr 111 = m_F \circ \big[\,\big(\Corr 111 \cir (\id_K\oti\eta_F) \big)
  \oti \id_F \,\big] =  m_F \circ ( \Corr 110 \oti \id_F) \,.
  \labl{111from110}
With the result \erf{PF_1} of Lemma 2.6 and the explicit form of the
multiplication $m_F$ from formula \erf{pic-Hb-Frobalgebra}, this amounts to
  \eqpic{Corr12H} {300} {48} {\setulen80
  \put(5,60)       {$\Corr 111 ~=$}
    \put(101,0) {\includepichtft{131aA}
  \put(-5,-9.2)    {\sse$ \Hss $}
  \put(8.2,-9.2)   {\sse$ \Hss $}
  \put(47.2,-9.2)  {\sse$ \Hss $}
  \put(36,19.5)    {\sse$ \Lambda $}
  \put(86,139)     {\sse$ \Hss $}
  }
  \put(219,60)     {$ \equiv $}
    \put(261,0) {\includepichtft{131bA}
  \put(-5,-9.2)    {\sse$ \Hss $}
  \put(8.2,-9.2)   {\sse$ \Hss $}
  \put(51.2,-9.2)  {\sse$ \Hss $}
  \put(36,15.5)    {\sse$ \Lambda $}
  \put(43,87.5)    {\sse$ \ohrad $}
  \put(89,139)     {\sse$ \Hss $}
  } }
with $\ohrad$ the right-adjoint action of $H$ on itself.
This expression extends in a straightforward manner to
  \eqpic{CorrgnH} {300} {117} { \put(0,18){ \setulen90
  \put(0,120)      {$\Corr g11=~$}
    \put(81,0) {\INcludepichtft{131c}{342}
  \put(-3.9,-13)   {\sse$ \underbrace{\hspace*{14.5em}}
                    _{g~ {\rm factors~of}~\Hs{\otimes}\Hs} $}
  \put(-5,-9.2)    {\sse$ \Hss $}
  \put(8.2,-9.2)   {\sse$ \Hss $}
  \put(44.5,-9.2)  {\sse$ \Hss $}
  \put(56.5,-9.2)  {\sse$ \Hss $}
  \put(76.5,-9.2)  {\sse$ \dots\dots$}
  \put(111.5,-9.2) {\sse$ \Hss $}
  \put(123.5,-9.2) {\sse$ \Hss $}
  \put(154.5,-9.2) {\sse$ \Hss $}
  \put(234,260)    {\sse$ \Hss $}
  \put(36,180.5)   {\sse$ \Lambda$}
  \put(70,130.5)   {\sse$ \Lambda$}
  \put(137,69.5)   {\sse$ \Lambda$}
  \put(43,220)     {\sse$ \ohrad $}
  \put(91.4,169.4) {\sse$ \ohrad $}
  \put(158.2,108.5){\sse$ \ohrad $}
  } } }

Finally, the morphisms with $q$ or $p$ larger than 1 are obtained by pre- and 
post-composition of \erf{CorrgnH} with an appropriate multiple product and
multiple coproduct of $F$, respectively.
Explicitly, writing $m_F^{(2)}\eq m_F$ and $m_F^{(l)}\eq m_F
\cir (m_F^{(l-1)} \oti \id_F)$ for $l\,{>}\,2$ as well as $m_F^{(0)} \eq \eta_F$
and $m_F^{(1)} \eq \id_F$, and analogously for $\Delta_F^{(l)}$, we have
  \be
  \Corr gpq = \Delta_F^{(p)} \circ \Corr g11 \circ
  \big( \id^{}_{K^{\otimes g}_{}} \oti m_F^{(q)} \big)
  \labl{gpqfromg11}
for all $p,q$ and $g$.


\section{Useful identities}\label{sec:lemmata}

In this section we present a few preliminary results that will be instrumental
in the proof of Theorem \ref{thm:main}. Our first task is to specialize the
half-monodromies and related morphisms that we introduced, in Section
\ref{subsec:hHa}, for a general \fftc\ \C\ to the specific case of
$\C \eq \HBimod$. To do so we use the specific form \erf{bibraid} of the
braiding in \HBimod\ and the explicit expressions \cite{lyub6,vire4} for the
dinatural family and for the Hopf algebra
structure morphisms of $K$. For the morphism $\QB_X$ defined in formula
\erf{QHX} we then obtain, for any bimodule $X \eq (X,\rho_X,\ohr_X)$,
  \eqpic{Qq_X} {150} {49} {\setulen80
  \put(9,57)       {$ \QB_X =~$}
    \put(80,0) {\includepichtft{133c}
  \put(-5.4,-9.2)  {\sse$ \Hss $}
  \put(19.3,-9.2)  {\sse$ \Hss $}
  \put(27.4,140.4) {\sse$ \Hss $}
  \put(52.8,140.4) {\sse$ \Hss $}
  \put(92.5,-9.2)  {\sse$ X $}
  \put(92.9,140.4) {\sse$ X $}
  \put(53,4.5)     {\sse$ Q $}
  \put(72,50.7)    {\sse$ Q^{-1} $}
  \put(103.1,68.7) {\sse$ \ohr_X^{} $}
  \put(80.9,101.8) {\sse$ \rho_X^{} $}
  } }
where $Q$ is the monodromy matrix of $H$,
while the endomorphism \erf{QHH} of $K\oti K$ becomes
  \eqpic{QQ_Haa} {240}{63} {
  \put(22,57)      {$ \QQ ~= $}
  \put(81,0)  { \Includepichtft{121b}
  \put(-4.3,-9.2)  {\sse$ \Hss $}
  \put(20.7,-9.2)  {\sse$ \Hss $}
  \put(29.4,139)   {\sse$ \Hss $}
  \put(44.8,3.9)   {\sse$ Q $}
  \put(54.8,139)   {\sse$ \Hss $}
  \put(62.8,12.8)  {\sse$ \apoi $}
  \put(78.7,-9.2)  {\sse$ \Hss $}
  \put(86.6,53.1)  {\sse$ Q^{-1} $}
  \put(87.6,77.9)  {\sse$ \apo $}
  \put(102.9,-9.2) {\sse$ \Hss $}
  \put(112.2,139)  {\sse$ \Hss $}
  \put(137.7,139)  {\sse$ \Hss $}
  } }
The S- and T-transformations \erf{S-HK} and \erf{p9} involve the monodromy
matrix and cointegral, and the ribbon element $v$, respectively:
  \eqpic{S_KH-TKH} {410}{45} { \put(0,4){
  \put(0,39)    {$ \SK ~= $}
  \put(50,-6) { \Includepichtft{121cA}
  \put(-4.5,-9.2) {\sse$ \Hss $}
  \put(13,51.8)   {\sse$ Q^{-1} $}
  \put(36.5,-9.2) {\sse$ \Hss $}
  \put(32.5,87)   {\sse$ \lambda $}
  \put(44.6,106)  {\sse$ \Hss $}
  \put(57.4,12.1) {\sse$ Q $}
  \put(73.5,56)   {\sse$ \lambda $}
  \put(85.6,106)  {\sse$ \Hss $}
  }
  \put(180,39)    {and}
  \put(230,39)    {$ \TK ~= $}
  \put(278,0)  { \Includepichtft{121a}
  \put(-4.4,-9.2) {\sse$ \Hss $}
  \put(12,15)     {\sse$ v $}
  \put(39.6,94)   {\sse$ \Hss $}
  \put(61.2,-9.2) {\sse$ \Hss $}
  \put(89.7,34.5) {\sse$ v^{-1} $}
  \put(105.3,94)  {\sse$ \Hss $}
  } } }
The partial monodromy action \erf{def-rho} of $K$ on an $H$-bimodule $X$,
which is obtained by composing $\QB_X$ in \erf{Qq_X} with
$\eps_K \oti \id_X \eq \eta^\vee \oti \eta^\vee \oti\id_X$ (and which, as noted
in Remark \ref{rem:YD}, fits together with the natural $K$-coaction to a
Yetter-Drinfeld structure) is
  \eqpic{Lyubact_HKH} {120} {48} {
  \put(0,47)       {$ \rho^K_X=~ $}
    \put(50,0) { \Includepichtft{133f}
  \put(-5.4,-8.5)  {\sse$ \Hss $}
  \put(10,-8.5)    {\sse$ \Hss $}
  \put(46.2,-8.5)  {\sse$ X $}
  \put(31,6.8)     {\sse$ Q $}
  \put(13,46.8)    {\sse$ Q^{-1} $}
  \put(54.7,68.8)  {\sse$ \ohr_X^{} $}
  \put(35.2,90.5)  {\sse$ \rho_X^{} $}
  \put(46.6,108.6) {\sse$ X $}
  } }
i.e.\ the natural $K$-action is nothing but the $H$-bimodule action composed
with variants of the Drinfeld map.
We will also need the inverse of the isomorphism $\SK$; it is given by
  \eqpic{S_KHinv} {155}{45} { \put(0,4){
  \put(0,42)       {$ \SK^{-1} ~= $}
    \put(50,-6) { \Includepichtft{121gA}
  \put(-4.5,-8.1)  {\sse$ \Hss $}
  \put(16.4,43.8)  {\sse$ Q $}
  \put(41.5,-8.1)  {\sse$ \Hss $}
  \put(32.4,87)    {\sse$ \lambda $}
  \put(44.1,106)   {\sse$ \Hss $}
  \put(59.3,4.1)   {\sse$ Q^{-1} $}
  \put(78.7,39)    {\sse$ \lambda $}
  \put(90.6,106)   {\sse$ \Hss $}
  } } }

Next, for further use we note that, since the R-matrix intertwines the coproduct
and opposite coproduct of $H$, conjugating by the monodromy matrix preserves
the coproduct:
  \eqpic{removeQs} {205} {31} {
    \put(0,0) {\Includepichtft{133d}
  \put(23,-9.2)    {\sse$ H $}
  \put(1,8.2)      {\sse$ Q $}
  \put(47,8.6)     {\sse$ Q^{-1}$}
  \put(16.5,74)    {\sse$ H $}
  \put(33,74)      {\sse$ H $}
  }
  \put(85,33)      {$=~\Delta_H~=$}
    \put(159,0) {\Includepichtft{133e}
  \put(23,-9.2)    {\sse$ H $}
  \put(-1.9,8.6)   {\sse$ Q^{-1}$}
  \put(50.7,8.6)   {\sse$ Q$}
  \put(16.5,74)    {\sse$ H $}
  \put(33,74)      {\sse$ H $}
  } }
We will also make use of the following result that can be formulated in the
general setting of ribbon categories:

\begin{lemma}\label{lem:2prod_mon}
For any commutative Frobenius algebra $A$ in a ribbon category \C\ the equalities
  \eqpic{2prod_mon} {250} {38} {\setulen90
    \put(0,0)  { \INcludepichtft{139a}{342}
  \put(-4.3,-9.2)  {\sse$ A $}
  \put(8.3,-9.2)   {\sse$ A $}
  \put(20,47.6)    {\sse$ c $}
  \put(21.5,-9.2)  {\sse$ A $}
  \put(28,30.7)    {\sse$ c $}
  \put(44.5,101)   {\sse$ A $}
  }
  \put(82,43)      {$=$}
    \put(120,0) { \INcludepichtft{139c}{342}
  \put(-4.3,-9.2)  {\sse$ A $}
  \put(8.3,-9.2)   {\sse$ A $}
  \put(20,47.6)    {\sse$ c $}
  \put(21.5,-9.2)  {\sse$ A $}
  \put(28,30.7)    {\sse$ c $}
  \put(44.2,101)   {\sse$ A $}
  }
  \put(203,43)     {$=$}
    \put(240,0) { \INcludepichtft{139b}{342}
  \put(-4.3,-9.2)  {\sse$ A $}
  \put(8.3,-9.2)   {\sse$ A $}
  \put(12.8,101)   {\sse$ A $}
  \put(21.8,-9.2)  {\sse$ A $}
  }
  \put(297,80)     {\catpic}
  }
hold.
\end{lemma}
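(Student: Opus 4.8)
The plan is to reduce each of the two left-hand morphisms in \erf{2prod_mon} -- each of which is a triple product of $A$ threaded by braidings -- to the plain iterated product $m_A \circ (m_A \oti \id_A)$ shown on the right, using four ingredients only: associativity of $m_A$, commutativity in the form $m_A \circ c_{A,A} \eq m_A$ (hence also $m_A \circ c_{A,A}^{-1} \eq m_A$), the Frobenius relations $\Delta_A \circ m_A \eq (\id_A \oti m_A) \circ (\Delta_A \oti \id_A) \eq (m_A \oti \id_A) \circ (\id_A \oti \Delta_A)$, and naturality of the braiding. Since the algebra is symmetric Frobenius one also has cocommutativity $c_{A,A} \circ \Delta_A \eq \Delta_A$, which I expect to need as well.

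First I would record the auxiliary fact that for a commutative algebra in a braided category the $k$-fold product is invariant under any braiding of its inputs: if $\beta\colon A^{\otimes k}\to A^{\otimes k}$ is built solely from identities and the morphisms $c_{A,A}^{\pm1}$, then $m_A^{(k)} \circ \beta \eq m_A^{(k)}$. This goes by induction on the number of crossings in $\beta$: by naturality of $c$ one slides the topmost crossing up until it meets a multiplication vertex, where it is absorbed via $m_A \circ c_{A,A}^{\pm1} \eq m_A$, the crossing-free case being associativity. In particular this fact is blind to over/under information, so it will collapse the two left-hand pictures of \erf{2prod_mon} -- which differ only in the signs of their crossings -- to the same morphism, giving both equalities at once.

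The actual work is then to bring each left-hand diagram into the form ``braiding of inputs, followed by an iterated product'', so that the auxiliary fact applies. Here I would apply a Frobenius relation at the vertex where a strand takes a ``detour'' around another, trading that detour for a straight pair of multiplications at the cost of a few residual crossings; naturality of $c$ (and cocommutativity of $\Delta_A$, to strip the braiding off any comultiplication that appears transiently) then lets these residual crossings be transported so that they sit directly below a multiplication cascade, whereupon the auxiliary fact eliminates them and leaves precisely $m_A \circ (m_A \oti \id_A)$.

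The genuine obstacle is not any single identity but the diagrammatic bookkeeping in this middle step: one must choose the Frobenius move so that the residual braiding really is a pure braiding of the inputs of one multiplication cascade, and keep careful track of which crossings are over and which are under -- it is exactly there, and not in the final clean-up, that commutativity and the symmetry of the Frobenius form, rather than mere associativity, are doing the work. Everything downstream is then routine.
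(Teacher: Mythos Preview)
Your overall strategy --- isolate a ``pure braiding of inputs'' beneath an iterated product and then absorb it via $m_A\circ c_{A,A}^{\pm1}=m_A$ --- is sound and would eventually get you there, but it is more elaborate than what the paper does, and along the way you have misread the diagrams in one place and imported a hypothesis you do not have.

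First, the misreading. You assert that the two left-hand pictures ``differ only in the signs of their crossings'' and therefore collapse simultaneously under your auxiliary fact. But the paper obtains the first equality as ``a direct consequence of the Frobenius property of $A$'' --- an identity that involves no braiding at all. So the two left-hand diagrams carry the \emph{same} crossings; what changes between them is the arrangement of product/coproduct vertices, and a single Frobenius move $(m_A\otimes\id_A)\circ(\id_A\otimes\Delta_A)=\Delta_A\circ m_A=(\id_A\otimes m_A)\circ(\Delta_A\otimes\id_A)$ takes one to the other. Your shortcut for handling both at once therefore does not apply; you would have to treat each separately, which your strategy can do, but the economy you claim is illusory.

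Second, the extra hypothesis. You invoke ``since the algebra is symmetric Frobenius one also has cocommutativity''. The lemma only assumes \emph{commutative} Frobenius, not symmetric, so cocommutativity is not available to you. The paper's proof of the second equality uses only associativity, commutativity, and the Frobenius relation (in that order: associativity, then commutativity, then associativity combined with Frobenius), so cocommutativity is genuinely unnecessary. If your route through the ``diagrammatic bookkeeping'' ends up relying on $c_{A,A}\circ\Delta_A=\Delta_A$, it is using more than is given.

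In short: the paper's argument is a short direct calculation --- one Frobenius move for the first equality, then a three-step associativity/commutativity/Frobenius chain for the second --- while you propose a general reduction lemma plus a Frobenius-and-naturality rewriting whose details you leave open. Both reach the goal, but the paper's route is shorter and stays strictly within the stated hypotheses.
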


\begin{proof}
The first equality is a direct consequence of the Frobenius property of $A$.
The second equality follows by consecutively using associativity, commutativity,
and associativity combined with the Frobenius property.
\end{proof}

We are now in a position to establish various convenient identities that are
satisfied for any factorizable ribbon Hopf algebra $H$.
First we obtain various equalities involving the coproduct and monodromy matrix.

\begin{lemma}
The following identities hold:
  \eqpic{Q2Delta} {100} {38} {\setulen80
  \put(0,0)    {\includepichtft{138a}}
  \put(21.2,-9.5)  {\sse$ H $}
  \put(6.6,112)    {\sse$ H $}
  \put(19.7,112)   {\sse$ H $}
  \put(32.2,112)   {\sse$ H $}
  \put(-2.8,3.2)   {\sse$Q^{-1}$}
  \put(41.5,1.8)   {\sse$Q$}
  \put(78,44)      {$=$}
  \put(120,0) {
  \put(0,0)    {\includepichtft{138b}}
  \put(11.7,-9.2)  {\sse$ H $}
  \put(-3.6,112)   {\sse$ H $}
  \put( 9.5,112)   {\sse$ H $}
  \put(22.2,112)   {\sse$ H $}
  } }
and
  \eqpic{Q3Delta} {150} {55} {\setulen80
  \put(0,-9)  {
  \put(0,0)  {\includepichtft{141aA}}
  \put(9.5,166.1)  {\sse$ H $}
  \put(33.1,166.1) {\sse$ H $}
  \put(39.2,-9.8)  {\sse$ H $}
  \put(48.2,166.1) {\sse$ H $}
  \put(63.7,166.1) {\sse$ H $}
  \put(9.2,16.4)   {\sse$Q^{-1}$}
  \put(80.3,2.5)   {\sse$Q$}
  \put(116,70)     {$=$}
  \put(146,0) {
  \put(0,0)  {\includepichtft{141eA}}
  \put(-4.1,166.1) {\sse$ H $}
  \put(18.9,166.1) {\sse$ H $}
  \put(25.4,-9.8)  {\sse$ H $}
  \put(35.1,166.1) {\sse$ H $}
  \put(50.7,166.1) {\sse$ H $}
  } } }
and
  \eqpic{Pic_QQ3} {370} {38} {\setulen80
  \put(0,0) { \includepichtft{134dA}
  \put(24,114.1)   {\sse$ H $}
  \put(37.1,114.1) {\sse$ H $}
  \put(45,-11.2)   {\sse$ H $}
  \put(54,114.1)   {\sse$ H $}
  \put(67.5,114.1) {\sse$ H $}
  \put(0,8)        {\sse$Q$}
  \put(78.6,8.5)   {\sse$Q^{-1}$}
  }
  \put(116,52)     {$=$}
  \put(143,0) { \includepichtft{134eA}
  \put(23.3,114.1) {\sse$ H $}
  \put(36.7,114.1) {\sse$ H $}
  \put(44.2,-11.2) {\sse$ H $}
  \put(54,114.1)   {\sse$ H $}
  \put(67.5,114.1) {\sse$ H $}
  \put(0,8)        {\sse$Q$}
  \put(78.6,6.9)   {\sse$Q^{-1}$}
  }
  \put(264,52)     {$=$}
  \put(300,0) { \includepichtft{134fA}
  \put(-2.5,114.1) {\sse$ H $}
  \put(11.5,114.1) {\sse$ H $}
  \put(19,-11.2)   {\sse$ H $}
  \put(28,114.1)   {\sse$ H $}
  \put(40.7,114.1) {\sse$ H $}
  }
  \put(378,52)     {$=$}
  \put(409,0) { \includepichtft{134gA}
  \put(-3.7,114.1) {\sse$ H $}
  \put(9.3,114.1)  {\sse$ H $}
  \put(17.5,-11.2) {\sse$ H $}
  \put(25.6,114.1) {\sse$ H $}
  \put(39.8,114.1) {\sse$ H $}
  } }
and
  \eqpic{inv_Qq3} {410} {47} { \put(0,2) { \setulen80
    \put(-2,0) { \includepichtft{135dA}
  \put(-6.2,-11.2) {\sse$ \Hss $}
  \put(21.2,-11.2) {\sse$ H $}
  \put(22.8,135.2) {\sse$ H $}
  \put(32.6,60.9)  {\sse$Q$}
  \put(37.9,135.2) {\sse$ H $}
  \put(51.2,13.2)  {\sse$Q^{-1}$}
  \put(54.2,135.2) {\sse$ H $}
  }
  \put(103,57)     {$=$}
    \put(144,0) { \includepichtft{135e}
  \put(-11.5,53)   {\sse$Q^{-1}$}
  \put(-6.5,-11.2) {\sse$ \Hss $}
  \put(21.2,-11.2) {\sse$ H $}
  \put(22.7,135.2) {\sse$ H $}
  \put(39.2,135.2) {\sse$ H $}
  \put(55.2,135.2) {\sse$ H $}
  \put(64.6,21.3)  {\sse$Q$}
  }
  \put(254,57)     {$=$}
    \put(292,0) { \includepichtft{135f}
  \put(-3,45.4)    {\sse$Q^{-1}$}
  \put(-6.2,-11.2) {\sse$ \Hss $}
  \put(15.2,-11.2) {\sse$ H $}
  \put(15.7,135.2) {\sse$ H $}
  \put(32.1,135.2) {\sse$ H $}
  \put(48.3,135.2) {\sse$ H $}
  \put(58.5,20.7)  {\sse$Q$}
  }
  \put(395,57)     {$=$}
    \put(431,0) { \includepichtft{135gA}
  \put(-6.2,-11.2) {\sse$ \Hss $}
  \put(21.4,-11.2) {\sse$ H $}
  \put(21.6,135.2) {\sse$ H $}
  \put(37.8,135.2) {\sse$ H $}
  \put(54.3,135.2) {\sse$ H $}
  } } }
\end{lemma}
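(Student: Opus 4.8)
The plan is to deduce all four displayed families of identities from the single relation \erf{removeQs} -- that conjugation by the monodromy matrix fixes the coproduct, i.e.\ that $Q^{-1}$ (hence $Q$) lies in the centralizer of $\Delta(H)$ in $H\otik H$ -- by transporting it along iterated coproducts with the help of coassociativity and the quasitriangularity identities \erf{deqf-qt}. For orientation, recall that \erf{removeQs} itself is immediate: by \erf{def-ribbon} one has $\Delta(v)\eq(v\oti v)\cdot Q^{-1}$, and since $v$ is central both $v^{\pm1}\oti v^{\pm1}$ is central in $H\otik H$ and $\Delta(v)$ commutes with $\Delta(H)$, whence $Q^{-1}\,\Delta(h)\eq\Delta(h)\,Q^{-1}$ for all $h\iN H$; equivalently, as phrased before \erf{removeQs}, this follows from $R$ intertwining $\Delta$ and $\Delta^{\mathrm{op}}$. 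Pictorially the content is that a $Q^{\pm1}$-decoration sitting on the two outgoing legs of a coproduct vertex may be pushed down onto its single incoming leg, and -- reading \erf{removeQs} backwards -- re-expanded across any coproduct vertex below it.

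For \erf{Q2Delta} and \erf{Q3Delta} I would first resolve the iterated coproduct into a binary tree of ordinary coproduct vertices using coassociativity, arranging the tree so that after this resolution the $Q^{-1}$ on the left and the $Q$ on the right each straddle exactly the two children of one and the same vertex. The preceding step then pushes each decoration down one level; iterating, both migrate to the common root edge, where $Q^{-1}$ and $Q$ act on one and the same line and cancel. Symbolically one is just repeatedly applying \erf{removeQs}, tensored with identity morphisms on either side. For the chains \erf{Pic_QQ3} and \erf{inv_Qq3} the two decorations initially straddle non-adjacent legs; here one uses the hexagon relations \erf{deqf-qt} -- i.e.\ the resulting formulas expressing $(\Delta\oti\id)\cir Q$ and $(\id\oti\Delta)\cir Q$ through $Q$ on the outer legs together with auxiliary $R$-factors -- to split each $Q$ across the intervening coproduct vertices, thereby reducing to the situation already handled; each equality sign in the displayed chains records one such splitting or cancellation, the last picture being the $Q$-free outcome. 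The dual leg \Hss\ occurring in \erf{inv_Qq3} is treated by first moving the pertinent $Q$-decoration across the evaluation and coevaluation of \Vectk, using $(\apo\oti\apo)\cir Q \eq \tauHH\cir Q$ together with the anti-algebra and anti-coalgebra properties of $\apo$, after which the argument proceeds as before.

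I expect the only genuinely fiddly point to be the bookkeeping in \erf{Pic_QQ3} and \erf{inv_Qq3}: each time a $Q$ is pushed through a coproduct vertex via \erf{deqf-qt} it deposits an $R^{\pm1}$-factor, and one must check that all these auxiliary factors reassemble into further monodromy matrices that again cancel rather than accumulate. In the graphical calculus this is transparent -- it is just the cancellation of oppositely oriented crossings on a common strand -- which is precisely why the chains of pictures can be written down move by move; the remaining verifications are routine and, as noted in the introduction, may be skipped by a reader interested only in the statement.
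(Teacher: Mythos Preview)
Your plan rests on the assumption that the four displayed morphisms are built from nothing but iterated coproducts decorated by $Q^{\pm1}$, so that repeatedly sliding $Q^{\pm1}$ through coproduct vertices via \erf{removeQs} collapses everything. The paper's own proof shows this assumption is wrong for at least \erf{Q3Delta} and \erf{Pic_QQ3}: the argument for \erf{Q3Delta} explicitly pushes an inverse antipode through a coproduct \emph{and two products}, invokes the bialgebra connecting axiom, and uses the defining property of the antipode before the reduction to \erf{Q2Delta} is even possible; the argument for \erf{Pic_QQ3} likewise begins by moving inverse antipodes through a coproduct. So the diagrams contain products and antipodes that your ``binary tree of coproduct vertices'' picture simply does not see, and your sliding argument never engages with them. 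The hexagon relations you propose for \erf{Pic_QQ3} and \erf{inv_Qq3} are also not what is used: the paper instead manipulates antipodes (together with $(\apo\oti\apo)\cir Q \eq \tauHH\cir Q$) to bring the decorations into a position where a single application of \erf{removeQs} suffices, then undoes the antipode moves. Your closing remark that the auxiliary $R$-factors ``reassemble into further monodromy matrices that again cancel'' is precisely the step that would need to be checked, and nothing in your outline indicates how.

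For \erf{Q2Delta} the paper takes a genuinely different route from the one you sketch: rather than a direct Hopf-algebraic computation, it reads the identity off from Lemma~\ref{lem:2prod_mon} applied to the commutative Frobenius algebra $F$ in the ribbon category \HBimod, using the explicit form \erf{bibraid} of the braiding and the dual-action identity \erf{Fv_act}, and then composing with duality morphisms. This categorical argument is short because the monodromy cancellation is already encoded in the commutativity of $F$; your proposed direct argument, if it can be made to work once the actual shape of the diagram is taken into account, would be more elementary but also more ad hoc.
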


\begin{proof}
(i)\, Consider the equality between the left- and right-most expressions in
\erf{2prod_mon} for the case of our interest, i.e.\ with $A \eq F$ and with $c$
the braiding in \HBimod. This involves the left-dual actions $\rhoV^H$ and
$\ohrV^H$ of $H$ on the bimodule ${}_{}^\vee\! F$. The latter obey
  \eqpic{Fv_act} {160} {33} {
    \put(-4,0) {\Includepichtft{137a}
  \put(-3,82)      {\sse$ H $}
  \put(16.5,-9.2)  {\sse$ H $}
  \put(31,-9.2)    {\sse$ ^\vee\!F $}
  \put(47.7,-9.2)  {\sse$ H$}
  \put(41.5,20.5)  {\sse$ \ohrV^H $}
  \put(39.2,35.5)  {\sse$ \rhoV^H $}
  }
  \put(73,34)      {$=$}
    \put(104,0) {\Includepichtft{137b}
  \put(-4.3,-9.2)   {\sse$ H $}
  \put(51,-9.2)    {\sse$ ^\vee\! F $}
  \put(73,-9.2)    {\sse$ H$}
  \put(17.5,82)    {\sse$ H $}
  } }
Implementing this relation and composing the resulting equality with suitable
duality morphisms yields \erf{Q2Delta}.
\\[2pt]
(ii)\,
On the left hand side of \erf{Q3Delta}, push the inverse antipode that
is located highest up in the picture to the top of the picture, i.e.\ through
a coproduct and two products, and push the two `lowest' products upwards by
invoking the connecting axiom of the bialgebra $H$. This results in the left
hand side of the following chain of equalities:
  \eqpic{Q3Delta_proof} {420} {60} { \put(-17,0) { \setulen80
    \put(0,0) { \includepichtft{141bB}
  \put(1.1,0)      {\sse$ Q^{-1} $}
  \put(28.7,166.2) {\sse$ H $}
  \put(50.7,166.2) {\sse$ H $}
  \put(61.5,-11.2) {\sse$ H $}
  \put(66.7,166.2) {\sse$ H $}
  \put(82.7,166.2) {\sse$ H $}
  \put(125.5,-.8)  {\sse$ Q $}
  }
  \put(169,74) {$=$}
    \put(204,0) { \includepichtft{141cB}
  \put(1.5,.2)     {\sse$ Q^{-1} $}
  \put(26.7,166.2) {\sse$ H $}
  \put(50.7,166.2) {\sse$ H $}
  \put(61.8,-11.2) {\sse$ H $}
  \put(66.7,166.2) {\sse$ H $}
  \put(82.7,166.2) {\sse$ H $}
  \put(125.2,-.8)  {\sse$ Q $}
  }
  \put(373,74) {$=$}
    \put(409,0) { \includepichtft{141dB}
  \put(2.1,6.1)    {\sse$ Q^{-1} $}
  \put(9.7,166.2)  {\sse$ H $}
  \put(32.9,166.2) {\sse$ H $}
  \put(33.2,-11.2) {\sse$ H $}
  \put(50.1,166.2) {\sse$ H $}
  \put(66.2,166.2) {\sse$ H $}
  \put(95.2,9.5)   {\sse$ Q $}
  } } }
The first of these equalities follows easily by a multiple use of associativity
and coassociativity, while the second one is obtained with the help of the
defining property of the antipode and of coassociativity.
Now the right hand side of \erf{Q3Delta_proof} equals the right hand side of
\erf{Q3Delta}, as is seen by first applying the connecting axiom to the two
coproducts that come directly after the monodromy matrices and then invoking
the identity \erf{Q2Delta}.
\\[2pt]
(iii)\,
The first equality in \erf{Pic_QQ3} is obtained by pushing the inverse antipodes down
through the coproduct (together with a deformation compatible with the braid
relations). The second of the equalities in \erf{Pic_QQ3} follows from
coassociativity combined with \erf{removeQs}, while the third equality is
immediate from coassociativity and properties of the inverse antipode.
\\[2pt]
(iv)\,
The first equality in \erf{inv_Qq3} follows by pushing $\apoi$ through the upper
coproduct and products. The second equality uses coassociativity and
$(\apo\oti\apo)\cir Q \eq \tauHH\cir Q$. Applying first \erf{removeQs} and then
undoing the manipulations performed in the first two equalities yields the third one.
\end{proof}

The next two identities involve the two-sided integral $\Lambda$ and the right
cointegral $\lambda$ of $H$.

\begin{lemma}\label{lem:reppastint}
We have
  \eqpic{reppastint} {100} {25} {
  \put(0,0) {\Includepichtft{133a}
  \put(-4,-8.5)    {\sse$ H $}
  \put(19.2,12.9)  {\sse$ \Lambda $}
  \put(-1.8,49.3)  {\sse$ \ohrad $}
  \put(6,68.5)     {\sse$ H $}
  \put(19,68.5)    {\sse$ H $}
  }
  \put(50,30)      {$=$}
  \put(85,0) {\Includepichtft{133bA}
  \put(18.5,-8.5)  {\sse$ H $}
  \put(-4.5,12.8)  {\sse$ \Lambda $}
  \put(17.6,52.8)  {\sse$ \ohrad $}
  \put(-3,68.5)    {\sse$ H $}
  \put(10,68.5)    {\sse$ H $}
  } }
where $\ohrad$ is the right-adjoint action of $H$ on itself
{\rm(}as defined in {\rm\erf{Corr12H})}.
\end{lemma}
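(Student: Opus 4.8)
The plan is to establish \erf{reppastint} by an explicit graphical (equivalently, Sweedler-notation) computation: unfold the right-adjoint action $\ohrad$ and the insertion of the integral on both sides, and then reduce the resulting equality of morphisms $H\To H\oti H$ to elementary properties of the two-sided integral $\Lambda$.

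Concretely, I would first replace $\ohrad$ by its definition (an antipode on one leg of $\Delta(\Lambda)$ followed by the appropriate multiplications) and apply the coproduct to the incoming $H$-line, on both the left and the right hand side of \erf{reppastint}. This rewrites the claim as an identity among morphisms built only from $m$, $\Delta$, $\apo$, $\apoi$, $\eta$, $\eps$ and $\Lambda$. Since a factorizable Hopf algebra is unimodular, $\Lambda$ is two-sided, so the ingredients I would use are: (i) the integral property $m\cir(\id_H\oti\Lambda)=\Lambda\cir\eps=m\cir(\Lambda\oti\id_H)$, sharpened to the standard identity $\sum a\,\Lambda_{(1)}\oti\Lambda_{(2)}=\sum\Lambda_{(1)}\oti\Lambda_{(2)}\,\apo(a)$ for a left integral together with its right-integral mirror; (ii) the relation $\apo\cir\Lambda=\Lambda$, again from unimodularity, as already used in the proof of Lemma 2.6; and (iii) coassociativity and the anti-(co)algebra property of $\apo$.

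The manipulation itself then consists of feeding the relevant Sweedler component of the incoming line through the identity (i): this is exactly the step that transports the insertion of the integral from one tensor factor of the output to the other, i.e.\ ``moves the representation past the integral''. Coassociativity is used to line up the various coproducts of $\Lambda$ and of the incoming line before and after this step, and the antipode axioms together with $\apo\cir\Lambda=\Lambda$ dispose of the residual $\apo$'s; after these rewritings the two sides agree. Alternatively one may read both sides of \erf{reppastint} as the images of $\Delta(\Lambda)$ under two conjugate $H$-actions and invoke the two-sidedness of $\Delta(\Lambda)$ directly, which amounts to the same thing.

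I expect the sole difficulty to be the bookkeeping rather than anything conceptual: one must choose the order in which coassociativity and identity (i) are applied so that the chain of rewritings actually closes up, and this is easiest to monitor in the graphical calculus, where step (i) is literally an $H$-line sliding past an integral vertex and the remaining moves are isotopies plus the defining relation of the antipode.
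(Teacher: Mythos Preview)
Your proposal is correct and matches the paper's argument: the paper also unfolds $\ohrad$ explicitly and then invokes precisely the integral-sliding identities you list under (i) (displayed there as \erf{Hopf_Frob_trick2}), noting that these in turn follow from unimodularity. The additional ingredients you mention, coassociativity and $\apo\cir\Lambda=\Lambda$, are implicit in that reduction, so there is no substantive difference in approach.
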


\begin{proof}
This follows by combining the explicit form
of $\ohrad$ with the identities \cite[(2.18)]{fuSs3}
   \eqpic{Hopf_Frob_trick2} {290} {29} {\setulen80
     \put(0,0) {\includepichtft{84f} \put(2.2,0){
   \put(-4.2,-11.2){\sse$ H $}
   \put(2.2,92)    {\sse$ H $}
   \put(27.7,10)   {\sse$ \Lambda $}
   \put(31,92)     {\sse$ H $}
   } }
   \put(60,41)     {$ = $}
     \put(86,0) { \includepichtft{84g}
   \put(-4.2,-11.2){\sse$ H $}
   \put(6.6,92)    {\sse$ H $}
   \put(26.6,92)   {\sse$ H $}
   \put(27.7,10)   {\sse$ \Lambda $}
   }
   \put(174,41)    {and}
     \put(240,0) { \includepichtft{84h}
   \put(-3.2,92)   {\sse$ H $}
   \put(15.7,10)   {\sse$ \Lambda $}
   \put(26.1,92)   {\sse$ H $}
   \put(29.8,-11.2){\sse$ H $}
   }
   \put(300,41)  {$ = $}
     \put(326,0) { \includepichtft{84i}
   \put(1.2,92)    {\sse$ H $}
   \put(15.7,10)   {\sse$ \Lambda $}
   \put(21.7,92)   {\sse$ H $}
   \put(29.8,-11.2){\sse$ H $}
   } }
(which, in turn, are obtained by combining various defining properties of the Hopf
algebra structure of $H$ with unimodularity).
\end{proof}

\begin{lemma}\label{lem:Sinv-7}
We have the chain
  \eqpic{Sinv-7} {410} {47} {
    \put(0,0) {\Includepichtft{124c}
  \put(18,-8)      {\sse$ \Hss $}
  \put(30,-8)      {\sse$ \Hss $}
  \put(11,12)      {\sse$ \Lambda $}
  \put(31.5,86)    {\sse$ \lambda $}
  \put(-3,106)     {\sse$ \Hss $}
  }
  \put(58,47)      {$ = $}
    \put(95,0)  {\Includepichtft{124d}
  \put(18,-9.2)    {\sse$ \Hss $}
  \put(30,-9.2)    {\sse$ \Hss $}
  \put(11,12)      {\sse$ \Lambda $}
  \put(31.5,86)    {\sse$ \lambda $}
  \put(-3,106)     {\sse$ \Hss $}
  }
  \put(153,47)     {$=$}
    \put(190,0) {\Includepichtft{124e}
  \put(18,-9.2)    {\sse$ \Hss $}
  \put(30,-9.2)    {\sse$ \Hss $}
  \put(11,12)      {\sse$ \Lambda $}
  \put(31.5,86)    {\sse$ \lambda $}
  \put(-3,106)     {\sse$ \Hss $}
  }
  \put(249,47)     {$=$}
    \put(282,0) {\Includepichtft{124f}
  \put(17.5,-9.2)  {\sse$ \Hss $}
  \put(30,-9.2)    {\sse$ \Hss $}
  \put(11.4,11)    {\sse$ \Lambda $}
  \put(26.5,74)    {\sse$ \lambda $}
  \put(-1.4,106)   {\sse$ \Hss $}
  }
  \put(339,47)     {$ = $}
    \put(375,16) {\Includepichtft{124g}
  \put(-5,-9.2)    {\sse$ \Hss $}
  \put(8,-8)       {\sse$ \Hss $}
  \put(3,75)       {\sse$ \Hss $}
  } }
of equalities.
\end{lemma}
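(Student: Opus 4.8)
The plan is to verify the chain \erf{Sinv-7} by manipulating the graph on its far left step by step, each equality being an application of identities already at our disposal. The five pictures differ only in the placement of the two monodromy matrices, the antipodes, the integral $\Lambda$ and the cointegral $\lambda$, so once the right moves are identified the whole chain should be a bookkeeping exercise.

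First I would treat the first equality. The subgraph surrounding $\Lambda$ in the leftmost picture is (a variant of) the right-adjoint action $\ohrad$ applied after the integral, so I would rewrite it using Lemma \ref{lem:reppastint} (identity \erf{reppastint}), equivalently the identities \erf{Hopf_Frob_trick2}, and then use coassociativity to reach the second picture. For the second and third equalities the task is to slide the monodromy matrices $Q$ and $Q^{-1}$ past the adjacent coproducts: here I would appeal to \erf{removeQs} when a full coproduct is involved and to \erf{Q2Delta}, \erf{Q3Delta}, \erf{Pic_QQ3} and \erf{inv_Qq3} when only one leg of a coproduct is involved, together with $(\apo\oti\apo)\cir Q \eq \tauHH\cir Q$ and the anti-(co)algebra properties of $\apo$ and $\apoi$ to normalise the positions of the antipodes.

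For the last equality the point is that, after the preceding rearrangements, the subgraph formed by $Q$, $\apo$, $m$, $\lambda$ and $Q^{-1}$ is precisely the left-hand side of \erf{fQS_Psi}; replacing it by the integral $\Lambda$ followed by a coproduct, and then using unimodularity in the form $\apo\cir\Lambda \eq \Lambda$ (and, if needed, the anti-coalgebra property of $\apoi$) together with the normalisation $\lambda\cir\Lambda \eq 1$, all the (co)integrals cancel and one is left with the simple morphism on the far right of \erf{Sinv-7}.

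The main obstacle will be the middle stage: getting the graph into a shape in which the subgraph matching \erf{fQS_Psi} is literally isolated, since this requires pushing several antipodes and both monodromy matrices through a chain of products and coproducts, and it is easy either to generate spurious antipodes or to confuse which leg of a monodromy matrix carries which $H$-action. Once that shape is reached, the final collapse via \erf{fQS_Psi} and the (co)integral normalisations is immediate.
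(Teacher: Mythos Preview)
Your proposal rests on a misreading of the statement: the five pictures in \erf{Sinv-7} contain \emph{no} monodromy matrices at all. The only ingredients present are the two $\Hs$-inputs, one $\Hs$-output, the integral $\Lambda$, the cointegral $\lambda$, and various products, coproducts and antipodes of $H$. (Note that in the paper every occurrence of $Q$ or $Q^{-1}$ is explicitly labelled; none appears here. The monodromy matrices were already eliminated in the previous step of \erf{Sinv-6} via \erf{fQS_Psi}, and Lemma~\ref{lem:Sinv-7} picks up \emph{after} that elimination.) Consequently the whole middle part of your plan --- sliding $Q$ and $Q^{-1}$ through coproducts using \erf{removeQs}, \erf{Q2Delta}, \erf{Q3Delta}, \erf{Pic_QQ3}, \erf{inv_Qq3} --- has nothing to act on, and invoking \erf{fQS_Psi} in the last step would \emph{reintroduce} monodromy matrices rather than remove anything.

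The actual mechanism is purely Hopf-algebraic and uses three specific facts that your proposal does not mention. The second equality needs Radford's symmetry property of the right cointegral of a unimodular Hopf algebra, $\lambda\cir m \eq \lambda\cir m\cir\tauHH\cir(\id_H\oti\apo^2)$. The third equality uses unimodularity in the form $\apo\cir\Lambda\eq\Lambda$. The decisive last equality, where $\Lambda$ and $\lambda$ disappear altogether, is an instance of the invertibility of the Frobenius map $h\mapsto\lambda\cir m\cir(h\oti\id_H)$ (equivalently, of the Larson--Sweedler Frobenius structure on $H$); concretely, identities of the type $(\lambda\oti\id_H)\cir(m\oti\id_H)\cir(\id_H\oti\Delta)\cir(\id_H\oti\Lambda)\eq\apo$ collapse the $(\Lambda,\lambda)$-pair. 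None of \erf{reppastint}, \erf{fQS_Psi}, or the monodromy identities is relevant here.
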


\begin{proof}
The second equality uses the fact that, since $H$ is unimodular, the cointegral
satisfies \cite[Thm.\,3]{radf12} $\lambda \cir m \eq \lambda \cir m \cir \tauHH
\cir (\id_H\oti\apo^2)$. The third follows with the help of $\apo\cir\Lambda \eq
\Lambda$. The important ingredient in the last equality is the invertibility of
the Frobenius map, which is equivalent to the  statement that $H$ has a natural
structure of a Frobenius algebra (see e.g.\ \cite[Eq.\,(8)]{coWe6}).
\end{proof}

Finally we have the following result involving somewhat more complicated
expressions, which arise in connection with the action \erf{LyubactC3} of the
generators $t_{j,k}$ of the mapping class group.

\begin{lemma}\label{lem:Hpastloops}
For any integer $p \,{\ge}\, 1$ we have
  \eqpic{Hpastloops} {320} {101} {\setulen80
      \put(0,0) {\includepichtft{132cA}
  \put(-5,-11.2)   {\sse$ H $}
  \put(17.2,-11.2) {\sse$ K $}
  \put(60.2,-11.2) {\sse$ K $}
  \put(66.8,128.2) {\sse$ \rho^K_F $}
  \put(66.8,218.9) {\sse$ \rho^K_F $}
  \put(76.2,-11.2) {\sse$ H $}
  \put(92.2,-11.2) {\sse$ F $}
  \put(92.9,273.3) {\sse$ F $}
  \put(29,63.8)    {\sse$ \rho_{\HK^{\!\otimes p}}^{} $}
  \put(39,33.8)    {\sse$ \ohr_{\HK^{\!\otimes p}}^{} $}
  }
  \put(152,115)    {$=$}
  \put(200,0) {\includepichtft{131dA}
  \put(-5,-11.2)   {\sse$ H $}
  \put(8.2,-11.2)  {\sse$ \Hss $}
  \put(20.2,-11.2) {\sse$ \Hss $}
  \put(76.2,-11.2) {\sse$ \Hss $}
  \put(88.2,-11.2) {\sse$ \Hss $}
  \put(121.2,-11.2){\sse$H$}
  \put(140.2,-11.2){\sse$ \Hss $}
  \put(192.2,272.5){\sse$ \Hss $}
  \put(34,171.5)   {\sse$ \Lambda $}
  \put(100,103.5)  {\sse$ \Lambda $}
  \put(57,208)     {\sse$ \ohrad $}
  \put(124,142)    {\sse$ \ohrad $}
  } }
where $\rho^{}_{K^{\otimes p}_{}}$ and $\ohr^{}_{K^{\otimes p}_{}}$ are
the left and right actions of the factorizable ribbon Hopf algebra $H$ on
the $H$-bimodule $K^{\otimes p}_{}$, respectively.
\end{lemma}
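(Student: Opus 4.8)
The strategy is to prove \erf{Hpastloops} by induction on $p$, in each step replacing every morphism by its explicit description as a linear map and then simplifying the resulting diagram in \Vectk\ with the help of the identities collected above.

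For the base case $p\eq1$ one has $K^{\otimes1}\eq K$, whose underlying vector space is $\Hs\oti\Hs$ and whose left and right $H$-actions are the coadjoint-type morphisms $\rho\coa\oti\idHs$ and $\idHs\oti\rho\coar$ of \erf{defHaa} (spelled out in \erf{def_lads}). One inserts these, together with the formula \erf{Lyubact_HKH} for the partial monodromy action $\rho^K_X$ specialised to $X\eq F$ (using the explicit form \erf{rhorho} of the $F$-actions $\brho,\bohr$), into the left-hand side of \erf{Hpastloops}. The two Drinfeld-map factors $Q$ and $Q^{-1}$ supplied by $\rho^K_F$ then sit next to coproducts of $H$ and can be transported with \erf{removeQs} and the relations \erf{Q2Delta}--\erf{inv_Qq3}, while the antipodes coming from \erf{def_lads} and \erf{rhorho} are moved and partially cancelled using the (anti-)algebra and (anti-)coalgebra properties of $\apo$ and $\apoi$ and the unimodularity relation $\apo\cir\Lambda\eq\Lambda$. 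This is the point at which factorizability of $H$ must be used: the accumulated $Q$-matrix data gets traded, via the identity \erf{fQS_Psi}, for the integral $\Lambda$, and Lemma \ref{lem:reppastint} then identifies the leftover $H$-strand sitting over $\Lambda$ as the right-adjoint action $\ohrad$, yielding the right-hand side of \erf{Hpastloops} for $p\eq1$. This computation is long but mechanical.

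For the inductive step $p{-}1\To p$ one writes $K^{\otimes p}\eq K\oti K^{\otimes p-1}$ and uses the definition \erf{def-tp} of the tensor product of \HBimod\ to split the left and right $H$-actions on $K^{\otimes p}$ into their actions on the two tensor factors via the coproduct of $H$; coassociativity then lines this up with the $H$-actions appearing in the $p{-}1$ version of \erf{Hpastloops}. Applying the already established $p\eq1$ identity to the single loop $\rho^K_F$ that absorbs the separated copy of $K$ converts the corresponding $H$-lines into an $\ohrad$-and-$\Lambda$ block acting on $F$, after which the inductive hypothesis takes care of the remaining $p{-}1$ loops. The subtle point is that the first loop leaves additional $H$-lines attached to $F$; these must be transported past the remaining loops before the inductive hypothesis applies, which is again done with Lemma \ref{lem:reppastint} and the Hopf--Frobenius relations \erf{Hopf_Frob_trick2}, and finally the $\ohrad$-and-$\Lambda$ blocks obtained at the two stages are brought into the staircase shape of the right-hand side by coassociativity and $\apo\cir\Lambda\eq\Lambda$.

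The main obstacle is the bookkeeping in this inductive step: one has to follow precisely how the two coproducts -- one from the left action, one from the right action on $K^{\otimes p}$ -- interleave with the $Q^{\pm1}$-factors and antipodes hidden inside each $\rho^K_F$, and to check that the \emph{modified} Frobenius-algebra strand emerging from the first loop is in exactly the form needed to invoke the inductive hypothesis. I expect the base case $p\eq1$ to be long but essentially routine, whereas the inductive step is where the genuine structural input -- the repeated use of \erf{reppastint}, \erf{fQS_Psi} and \erf{Hopf_Frob_trick2} to carry $H$-lines through the loops -- is required.
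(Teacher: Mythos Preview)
Your inductive strategy is plausible and should ultimately work, but it takes a longer route than the paper's proof, and in two places you are doing work that can be avoided.

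First, the paper does not argue by induction on $p$; it treats all $p$ simultaneously. The starting observation is that the stack of $\rho^K_F$-loops on the left-hand side, before the extra $H$-actions are inserted, is nothing but $\Corr p11$, whose explicit \Vectk-form as a staircase of $\ohrad$-over-$\Lambda$ blocks has already been computed in \erf{CorrgnH}. Hence there is no need to re-run the conversion of $Q^{\pm1}$-data into the integral via \erf{fQS_Psi}; that work is already packaged in \erf{CorrgnH} (via \erf{PF_1} and \erf{Corr12H}). Your base case is essentially re-deriving \erf{Corr12H}.

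Second, after inserting \erf{CorrgnH} and writing out the $H$-bimodule action on $K^{\otimes p}$ (which just supplies two $p$-fold coproducts of $H$, one for each side), one applies Lemma~\ref{lem:reppastint} at every loop simultaneously to push the individual coproduct legs past their respective $\Lambda$'s. The remaining simplification uses only the representation property of $\ohrad$ and the anti-(co)algebra behaviour of the antipode; the identities \erf{Q2Delta}--\erf{inv_Qq3} from Lemma~4.2 are not invoked here at all. The decisive structural observation, which replaces your entire inductive step, is then that in the resulting expression two copies of the $(p{-}1)$-fold coproduct $\Delta^{(p-1)}\colon H\to H^{\otimes p}$ appear, and this is an $H$-\emph{bimodule} morphism; its compatibility with the regular left and right $H$-actions is exactly what collapses the picture to the right-hand side in one stroke.

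So the comparison is: your induction trades one conceptual step (the bimodule-morphism property of the iterated coproduct) for repeated bookkeeping of $H$-lines through loops, and re-derives \erf{Corr12H} inside the base case. The paper's direct argument is shorter precisely because it front-loads \erf{CorrgnH} and then uses that single structural fact about $\Delta^{(p-1)}$.
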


\begin{proof}
Recalling the expression \erf{CorrgnH} for $\Corr g11$, writing out the actions
$\rho^{}_{K^{\otimes p}_{}}$ and $\ohr^{}_{K^{\otimes p}_{}}$
and invoking Lemma \ref{lem:reppastint},
the left hand side of \erf{Hpastloops} becomes
  \eqpic{Hpastloops_1} {320} {103} {\setulen80
    \put(0,0) {\includepichtft{131eA}
  \put(16.4,-11.2) {\sse$ H $}
  \put(40.8,-11.2) {\sse$ \Hss $}
  \put(55.2,-11.2) {\sse$ \Hss $}
  \put(98.2,-11.2) {\sse$ \Hss $}
  \put(112.5,-11.2){\sse$ \Hss $}
  \put(148,-11.2)  {\sse$ \cdots\cdots $}
  \put(199.2,-11.2){\sse$ \Hss $}
  \put(214.2,-11.2){\sse$ \Hss $}
  \put(310.9,-11.2){\sse$ H $}
  \put(327.1,-11.2){\sse$ \Hss $}
  \put(396.1,289)  {\sse$ \Hss $}
  \put(84,171.5)   {\sse$ \Lambda$}
  \put(141,100.5)  {\sse$ \Lambda$}
  \put(241.5,31.1) {\sse$ \Lambda$}
  \put(92.2,217)   {\sse$\ohrad$}
  \put(92.2,234)   {\sse$\ohrad$}
  \put(92.2,254)   {\sse$\ohrad$}
  \put(149.7,137.5){\sse$\ohrad$}
  \put(149.7,155.8){\sse$\ohrad$}
  \put(149.7,175.3){\sse$\ohrad$}
  \put(249.7,63)   {\sse$\ohrad$}
  \put(249.7,82)   {\sse$\ohrad$}
  \put(249.7,102.8){\sse$\ohrad$}
  } }
Invoking the representation property of the right-adjoint action $\ohrad$
and the anti-(co)algebra
morphism property of the antipode several times, this morphism can be rewritten as
  \eqpic{Hpastloops_2} {320} {109} {\setulen80
    \put(0,0) {\includepichtft{131fA}
  \put(2.2,-11.2)  {\sse$ H $}
  \put(36.8,-11.2) {\sse$ \Hss $}
  \put(51.8,-11.2) {\sse$ \Hss $}
  \put(69,-11.2)   {\sse$\cdots $}
  \put(83.5,-11.2) {\sse$ \Hss $}
  \put(97.5,-11.2) {\sse$ \Hss $}
  \put(157.2,-11.2){\sse$ \Hss $}
  \put(171.2,-11.2){\sse$ \Hss $}
  \put(222.2,-11.2){\sse$ H $}
  \put(323.5,-11.2){\sse$ \Hss $}
  \put(400.4,295)  {\sse$ \Hss $}
  \put(63.1,213.1) {\sse$ \Lambda$}
  \put(108.8,169.7){\sse$ \Lambda$}
  \put(182.1,107.1){\sse$ \Lambda$}
  } }
We now observe that in \erf{Hpastloops_2} there appear two copies of the
$(p{-}1)$-fold coproduct of $H$. Using that these are bimodule morphisms from
$H$ to $H^{\otimes p}_{}$, we conclude that \erf{Hpastloops_2}, and thus the
left hand side of \erf{Hpastloops}, equals the right hand side of
\erf{Hpastloops}.
\end{proof}


\section{Proof of the main theorem}\label{sec:proofmain}

We are now in a position to establish invariance
of the correlators $\Cor gn$ under the action of the mapping class group \Map gn.
In the subsequent lemmas we treat separately the
various types of generators from the presentation given in Section \ref{ssec:mpg}.

We start with the generators affording S- and T-transformations of the
one-holed torus.

\begin{lemma}\label{lem:101S,101T}
We have
  \be
  \Corr 110 \circ \SK = \Corr 110  \qquand \Corr 110 \circ \TK = \Corr 110
  \ee
for $\SK$ and $\TK$ as given in {\rm \erf{S_KH-TKH}}.
\end{lemma}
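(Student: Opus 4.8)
The plan is to establish the two invariance identities separately, in both cases reducing to a graphical manipulation of the linear-map expression \erf{PF_1} for $\Corr 110 \equiv \Cor 11$ together with the concrete forms \erf{S_KH-TKH} of $\SK$ and $\TK$. The $T$-invariance is the easier of the two: $\TK$ acts on $K = \Hs \otik \Hs$ by inserting the ribbon element $v$ on the left leg and $v^{-1}$ on the right leg of the bimodule. Since $\Cor 11$ is built from $\rho^K_F$ and the Frobenius structure of the \emph{coregular} bimodule $F$, and since the twist $\theta_F = \id_F$ (as the algebra $F$ is commutative symmetric Frobenius, so its twist is trivial), composing with $\TK$ amounts to acting with $v \oti v^{-1}$ through the partial monodromy action \erf{Lyubact_HKH}; I would use $\Delta \cir v = (v \oti v)\cdot Q^{-1}$ from \erf{def-ribbon}, centrality of $v$, and the fact that $f_Q(\lambda) \eq \Lambda$ with $\apo \cir v \eq v$, to see that the two ribbon-element insertions cancel against each other after being pushed through the product/coproduct structure. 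Concretely I would start from the last picture in \erf{PF_1}, pre-compose with $\TK$, push the $v$ and $v^{-1}$ upward through the comultiplication and the $\Lambda$-insertion using \erf{removeQs} and unimodularity, and watch them annihilate.

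For the $S$-invariance I would work from the identity $\Cor 11 \cir \SK \eq \Cor 11$ directly in the vector-space pictures. Here $\SK$ is given by the left-hand expression in \erf{S_KH-TKH}, involving $Q^{-1}$ on one pair of legs, $Q$ on the other, and two insertions of the cointegral $\lambda$. Pre-composing the fourth picture of \erf{PF_1} (the one with a single $\Lambda$-insertion and no $Q$'s) with this $\SK$ produces a diagram containing $Q^{\pm1}$, $\lambda$, $\lambda$, and $\Lambda$. The strategy is then to recognize this as essentially running the chain \erf{PF_1} backwards: the whole point of Lemma 2.6 is that the $Q^{\pm1}$/$\lambda$-decorated expression on the left of \erf{PF_1} already equals the clean $\Lambda$-expression on the right, so after composing with $\SK$ one should again be able to collapse the $Q$'s and one of the $\lambda$'s using the key identity \erf{fQS_Psi} (which precisely trades $Q^{-1}\!-\!\apo\!-\!\lambda\!-\!m\!-\!Q$ sandwiches for a $\Lambda\!-\!\Delta$ expression), together with the intertwining property of $f_{Q^{-1}}$ between left-adjoint and left-coadjoint actions and the cointegral relation $\lambda \cir m \eq \lambda \cir m \cir \tauHH \cir (\id_H \oti \apo^2)$ used in Lemma \ref{lem:Sinv-7}. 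In fact Lemma \ref{lem:Sinv-7} looks tailor-made for exactly this: its chain \erf{Sinv-7} simplifies a $\Lambda$-$\lambda$-decorated morphism down to a bare identity-like map, and I expect the $S$-invariance computation to funnel into an application of that chain after the $Q$'s have been eliminated via \erf{removeQs} and the Drinfeld-map intertwining.

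**The main obstacle** I anticipate is bookkeeping in the $S$-case: there are two separate $\lambda$-insertions coming from $\SK$ plus the $\Lambda$ already present in $\Cor 11$, and arranging the antipodes and (co)products so that \erf{fQS_Psi} and Lemma \ref{lem:Sinv-7} become literally applicable will require several applications of coassociativity, the anti-(co)algebra property of $\apo$, unimodularity ($\apo \cir \Lambda \eq \Lambda$, $\Lambda$ two-sided), and the relation $(\apo \oti \apo) \cir Q \eq \tauHH \cir Q$ — the same toolkit used throughout Section \ref{sec:lemmata}. A secondary subtlety is that $\SK$ is only invertible (it is an isomorphism, shown in \erf{S_KHinv}), so I should check I am using the correct orientation: the claim is $\Cor 11 \cir \SK \eq \Cor 11$, i.e. $\SK$ fixes $\Cor 11$ under \emph{pre}-composition, matching the $\big(z_\gamma\big)^*$ convention in \erf{piXgn}. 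Once the $S$-identity is in hand, the $T$-identity should follow in a few lines, and together they dispose of the $g \eq 1$, $n \eq 0$ modular group generators; subsequent lemmas in Section \ref{sec:proofmain} will then handle the remaining generators of \Mapgn.
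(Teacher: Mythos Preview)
Your proposal identifies the right toolkit and is essentially on track, but differs from the paper's organization in two notable ways.

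For the $T$-invariance you list more machinery than is needed. The paper simply composes $\TK$ with the \emph{last} picture of \erf{PF_1}, giving an expression in which $v$ and $v^{-1}$ sit next to the coproduct of $\Lambda$; one application of \erf{Hopf_Frob_trick2} together with centrality of $v$ and $\apo\cir v \eq v$ lets them cancel. No appeal to $\Delta\cir v \eq (v\oti v)\,Q^{-1}$, to $f_Q(\lambda)\eq\Lambda$, or to \erf{removeQs} is required.

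For the $S$-invariance the paper makes two choices different from yours: it proves the equivalent statement $\Corr 110\cir\SK^{-1} \eq \Corr 110$ rather than working with $\SK$, and it composes $\SK^{-1}$ (from \erf{S_KHinv}) with the \emph{first} picture of \erf{PF_1} (the one carrying $Q$, $Q^{-1}$, $\lambda$) rather than with the clean $\Lambda$-only fourth picture. The point of this choice is that the composite then contains adjacent $Q^{\pm1}$-$\lambda$ patterns that match \erf{fQS_Psi} directly, collapsing the two monodromy matrices to a single new $\Lambda$ in one step (using also $\lambda\cir m \eq \lambda\cir m\cir\tauHH\cir(\id_H\oti\apo^2)$). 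After that, Lemma \ref{lem:Sinv-7} applies verbatim and one is left with the rightmost picture of \erf{PF_1}. Your plan (compose $\SK$ with the fourth picture, then undo via \erf{fQS_Psi} and Lemma \ref{lem:Sinv-7}) is not wrong, but the bookkeeping you anticipate as the ``main obstacle'' is genuinely heavier that way; the paper's choice of $\SK^{-1}$ and of the first picture is what makes the two key identities line up without extensive antipode-shuffling.
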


\begin{proof}
This has already been shown in Lemmas 5.8 and 5.9 of \cite{fuSs3}, but we
find it instructive to repeat the main steps here.
\\[2pt]
(i)\, Composing $\SK^{-1}$ as given in \erf{S_KHinv} with the first expression
for $\Corr 110$ in \erf{PF_1} results in the first equality in
  \Eqpic{Sinv-6} {380} {65} {
  \put(-31,65)  {$ \Corr 110\circ\SK^{-1} ~= $}
     \put(78,0) {\Includepichtft{124a_1}
  \put(-5.4,-9.2) {\sse$ \Hss $}
  \put(7,-9.2)    {\sse$ \Hss $}
  \put(16.3,52.7) {\sse$ Q $}
  \put(31.9,53.1) {\sse$ Q^{-1} $}
  \put(23.4,1.9)  {\sse$ Q^{-1} $}
  \put(46.6,1.7)  {\sse$ Q $}
  \put(32.3,39)   {\sse$ \lambda $}
  \put(34.5,91)   {\sse$ \lambda $}
  \put(72,126)    {\sse$ \lambda $}
  \put(81,147)    {\sse$ \Hss $}
  }
  \put(189,65)  {$=$}
      \put(224,0) {\Includepichtft{124b}
  \put(-5,-9.2)   {\sse$ \Hss $}
  \put(9,-9.2)    {\sse$ \Hss $}
  \put(26.5,48)   {\sse$ \Lambda $}
  \put(40,9)      {\sse$ \Lambda $}
  \put(60.4,126)  {\sse$ \lambda $}
  \put(70.5,147)  {\sse$ \Hss $}
  }
  \put(329,65)  {$=$}
      \put(359,0) {\Includepichtft{124h}
  \put(-5,-9.2)   {\sse$ \Hss $}
  \put(9,-9.2)    {\sse$ \Hss $}
  \put(40,9)      {\sse$ \Lambda $}
  \put(70.5,147)  {\sse$ \Hss $}
  } }
Here in the second step we get rid of the two monodromy matrices by implementing
the relations \erf{fQS_Psi} between the integral, cointegral and monodromy
matrix, combined with the identity $\lambda \cir m \eq \lambda \cir m \cir \tauHH
\cir (\id_H\oti\apo^2)$, while the third follows by Lemma \ref{lem:Sinv-7}.
Finally, after pushing the `upper' inverse antipode through the coproduct, the
so obtained final expression in \erf{Sinv-6} can be recognized as the right-most
one in formula \erf{PF_1}. This shows $\Corr 110 \cir \SK^{-1} \eq \Corr 110$.
\\[4pt]
(ii)\, Composing $\TK$ as given in \erf{S_KH-TKH} with the last expression for
$\Corr 110$ in \erf{PF_1} yields
  \eqpic{T-inv} {200} {47} {
  \put(11,48)     {$ \Corr 110 \circ \TK ~= $}
  \put(118,0) {\Includepichtft{123lA}
  \put(-5,-9.2)   {\sse$ \Hss $}
  \put(8.5,-9.2)  {\sse$ \Hss $}
  \put(23.5,13.5) {\sse$ v $}
  \put(49,13.5)   {\sse$ v^{-1} $}
  \put(36,2.5)    {\sse$ \Lambda $}
  \put(69.2,112)  {\sse$ \Hss $}
  } }
After applying \eqref{Hopf_Frob_trick2} and using that $\apo \cir v \eq v$,
the central elements $v$ and $v^{-1}$
appearing here cancel out. Thus we arrive at $\Corr 110 \cir \TK \eq \Corr 110$.
\end{proof}

In view of the relation \erf{gpqfromg11} between $\Corr g11$ and
$\Corr gpq$ for arbitrary number of incoming and outgoing insertions, Lemma
\ref{lem:101S,101T} immediately generalizes as follows.

\begin{prop}\label{prop:triple}
For any triple of integers $g\,{\ge}\,1$ and $p,q\,{\ge}\,0$ we have
  \be
  \bearl
  \Corr gpq \circ
  \big( \id_K^{\otimes m} \oti \SK^{} \oti\id_K^{\otimes g-m-1} \oti \id_F^{\otimes q} \big)
  = \Corr gpq  \qquad{\rm and}
  \\{}\\[-5pt]
  \Corr gpq \circ
  \big( \id_K^{\otimes m} \oti \TK^{} \oti\id_K^{\otimes g-m-1} \oti \id_F^{\otimes q} \big)
  = \Corr gpq
  \eear
  \ee
for all $m \iN \{0,1,...\,,g{-}1 \}$.
\end{prop}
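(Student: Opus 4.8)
The plan is to deduce everything from the single‑handle identities of Lemma~\ref{lem:101S,101T} via the structural relation \erf{gpqfromg11}. First I would reduce to the case $p\eq q\eq 1$. Writing $z\iN\{\SK,\TK\}$ and inserting \erf{gpqfromg11},
\[
\Corr gpq \circ \big(\id_K^{\otimes m}\oti z\oti\id_K^{\otimes g-m-1}\oti\id_F^{\otimes q}\big)
= \Delta_F^{(p)}\circ\Corr g11\circ\big(\id_{K^{\otimes g}}\oti m_F^{(q)}\big)\circ\big(\id_K^{\otimes m}\oti z\oti\id_K^{\otimes g-m-1}\oti\id_F^{\otimes q}\big).
\]
Since $z$ is an endomorphism of a single tensor factor of $K^{\otimes g}$ while $\Delta_F^{(p)}$ and $m_F^{(q)}$ act only on the $F$‑legs, the interchange law lets one slide $z$ past $m_F^{(q)}$, and $\Delta_F^{(p)}$ merely sits on the outside. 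Hence it suffices to prove
\[
\Corr g11\circ\big(\id_K^{\otimes m}\oti z\oti\id_K^{\otimes g-m-1}\oti\id_F\big) = \Corr g11\qquad\text{for }z\iN\{\SK,\TK\}.
\]

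Second, I would use the explicit shape of $\Corr g11$. Reading the picture \erf{Sk_morph} for $n\eq 1$, with the unit $\eta_F$ replaced by $\id_F$ as prescribed in Remark~\ref{rem:pq2} for $q\eq 1$, the morphism $\Corr g11$ collapses to the $g$‑fold iterated action of the handle Hopf algebra on the single ``backbone'' $F$‑line,
\[
\Corr g11 = \rho^K_F\circ\big(\id_K\oti\rho^K_F\big)\circ\cdots\circ\big(\id_{K^{\otimes g-1}}\oti\rho^K_F\big)\circ\big(\id_{K^{\otimes g}}\oti\id_F\big),
\]
with the $g$ copies of $K$ running straight into their own $\rho^K_F$‑vertices and no braidings among them; in particular $\Corr 111 \eq \rho^K_F$. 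Consequently $z$ inserted on one $K$‑factor commutes with every $\rho^K_F$‑vertex except the one fed by that factor, and, writing $\xi$ for the piece of the chain lying below that vertex, the offending subexpression is $\rho^K_F\circ(z\oti\xi) = \big(\rho^K_F\circ(z\oti\id_F)\big)\circ(\id_K\oti\xi)$. Thus the claim reduces to the \emph{local} identity $\rho^K_F\circ(z\oti\id_F)=\rho^K_F$, i.e.\ $\Corr 111\circ(z\oti\id_F)\eq\Corr 111$.

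Finally, this local identity is immediate from \erf{111from110} together with Lemma~\ref{lem:101S,101T}:
\[
\Corr 111\circ(\SK\oti\id_F) = m_F\circ\big((\Corr 110\circ\SK)\oti\id_F\big) = m_F\circ(\Corr 110\oti\id_F) = \Corr 111,
\]
and the same computation, now invoking the $\TK$‑part of Lemma~\ref{lem:101S,101T}, gives $\Corr 111\circ(\TK\oti\id_F)\eq\Corr 111$; reassembling the chain yields the Proposition for all $g\,{\ge}\,1$, $p,q\,{\ge}\,0$ and all positions $m$. The only point that needs a little care — and which I regard as the main, though mild, obstacle — is the second step: justifying that for $n\eq 1$ the correlator \erf{Sk_morph} really reduces to the plain iterated $\rho^K_F$ with distinct, unbraided inputs (this uses the rearrangement via the commutative symmetric Frobenius structure of $F$ described after \erf{Sk_morph}, or one may read it off the staircase form \erf{CorrgnH}), and then keeping track of which $K$‑factor feeds which vertex so that the insertion at position $m$ is localized correctly.
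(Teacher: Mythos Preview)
Your argument is correct and is exactly what the paper has in mind: it simply states that, in view of \erf{gpqfromg11}, Lemma~\ref{lem:101S,101T} ``immediately generalizes,'' leaving the reduction to $p\eq q\eq 1$ and the further localisation to a single handle implicit; you have spelled these steps out. One caution: the literal identification $\Corr 111\eq\rho^K_F$ is not accurate --- the single-handle piece of \erf{Sk_morph} also contains an $m_F$ and a $\Delta_F$ (this is why the proof of the lemma preceding \erf{PF_1} needs to insert all of these) --- but this does not affect your argument, since the iterative decomposition you need is $\Corr g11 = \Corr 111\circ(\id_K\oti\Corr{(g{-}1)}11)$, which is precisely what the staircase \erf{CorrgnH} exhibits (each step being \erf{Corr12H}), and your step~3 establishes the required local identity for $\Corr 111$ directly from \erf{111from110} and Lemma~\ref{lem:101S,101T} without ever invoking $\rho^K_F$.
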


Next we show that the correlators do not change when applying a twist to
any tensor power of the handle Hopf algebra $K$:

\begin{lemma}\label{lem:inv_twist}
For any integer $g\,{\ge}\,1$ we have
  \be
  \Corr g11 \circ \big( \id_K^{\otimes r} \oti \theta^{}_{K^{\otimes s}_{}} \oti
  \id_K^{\otimes t} \oti \id_F \big) = \Corr g11
  \labl{0.40}
for all triples $r,s,t$ of non-negative integers with $r\,{+}\,s\,{+}\,t \eq g$.
\end{lemma}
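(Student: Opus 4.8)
The plan is to reduce everything to the single fact that the coregular bimodule $F$ has trivial twist, $\theta_F \eq \id_F$. By \erf{deftwist}, the twist $\theta^{}_{K^{\otimes s}_{}}$ of the bimodule $K^{\otimes s}_{}$ is the composite of left multiplication by the ribbon element $v$ with right multiplication by $v^{-1}$. Since, by \erf{def-tp}, the left and right actions on a tensor product of bimodules are pulled back along the coproduct of $H$, inserting $\theta^{}_{K^{\otimes s}_{}}$ into $\Corr g11$ amounts to acting with the components of $\Delta^{(s-1)}(v)$ on the $s$ ``left legs'' and with those of $\Delta^{(s-1)}(v^{-1})$ on the $s$ ``right legs'' of the handles of the block, via the coadjoint actions $\rho\coa$, $\rho\coar$ that enter the bimodule structure \erf{defHaa} of $K$.

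Starting from the explicit form \erf{CorrgnH} of $\Corr g11$, I would push these ribbon-element insertions upward through the family of integrals $\Lambda$ and right-adjoint actions $\ohrad$ appearing there (using, in particular, Lemma~\ref{lem:reppastint}). The Hopf-algebraic input needed is elementary: $v$ is central, $\apo\cir v \eq v$ and $\eps\cir v \eq 1$ (so $v$ slides through the products, the antipodes and past any $\eps$-contracted leg), while the factor $Q^{-1}$ produced by the relation $\Delta\cir v \eq (v\oti v)\cdot Q^{-1}$ when $v$ is distributed over several handles is disposed of by \erf{removeQs}, i.e.\ by invariance of the coproduct under conjugation with the monodromy matrix. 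This is bookkeeping of the same type as in the proof of Lemma \ref{lem:Hpastloops}. After the dust settles, all copies of $v$ and of $v^{-1}$ have been transported onto the single output leg labelled $F$, where they act as left multiplication by $v$ followed by right multiplication by $v^{-1}$ --- that is, by \erf{deftwist} again, precisely as $\theta_F$. As $F$ is the coregular bimodule, $\theta_F \eq \id_F$, so the insertion is trivial and \erf{0.40} follows.

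A more conceptual variant, which I would use as a consistency check, runs as follows: naturality of the twist in the ribbon category \HBimod\ gives $\Corr g11 \cir \theta^{}_{K^{\otimes g}\otimes F} \eq \theta_F \cir \Corr g11 \eq \Corr g11$; expanding the left twist by the balancing axiom $\theta_{X\otimes Y} \eq (\theta_X\oti\theta_Y)\cir c_{Y,X}\cir c_{X,Y}$ rewrites it in terms of single-handle twists $\theta_K$, of $\theta_F \eq \id_F$, and of double braidings among the tensor factors, and one then has to see that, after using $\theta_F \eq \id_F$, commutativity of $F$ (which annihilates the monodromies between $F$-strands in \erf{Sk_morph}) and \erf{removeQs} (which removes the remaining handle--handle monodromies), only the twist on the $K^{\otimes s}_{}$-block survives. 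I expect the genuinely delicate point to be this control of the monodromy matrices that arise when the ribbon element, equivalently the double braidings, is spread over the $s$ handles of the block: one must check that they all get reabsorbed, so that the net effect really is just $\theta_F$ on the output leg. Everything else is either naturality of the twist or one of the standard identities for $v$, $\Lambda$ and $\lambda$ already assembled in Section \ref{sec:lemmata}.
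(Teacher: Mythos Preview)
Your approach is correct in spirit and reaches the same endpoint as the paper --- the ribbon element and its inverse get transported onto a single line and cancel (which you phrase as $\theta_F \eq \id_F$, the paper as $v\,{\cdot}\,h\,{\cdot}\,v^{-1}\eq h$ on an $H$-line). But you take a detour that the paper avoids. The paper's proof is three sentences: it simply invokes Lemma~\ref{lem:Hpastloops} as a black box. That lemma already tells you what happens when \emph{arbitrary} elements of $H$ act from the left and from the right on $K^{\otimes p}$ before the loops --- they are transported, picking up an antipode respectively an inverse antipode, onto one and the same $H$-line inside the correlator. Specializing the two elements to $v$ and $v^{-1}$ and using $\apo\cir v \eq v$ together with centrality gives immediate cancellation; the case $s\,{<}\,g$ is declared completely analogous.

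In particular there is no need to expand $\Delta^{(s-1)}(v)$ into $v^{\otimes s}$ times a cascade of $Q^{-1}$'s and then argue that \erf{removeQs} reabsorbs them. This expansion is precisely your self-identified ``genuinely delicate point'', and it is a red herring: Lemma~\ref{lem:Hpastloops} handles the full left and right $H$-actions on $K^{\otimes p}$ in one stroke, so the coproduct of $v$ never has to be computed. Your second, ``conceptual'' variant via naturality of the twist and the balancing axiom runs into the same problem of controlling monodromies, which again the paper sidesteps entirely. So: right idea, but recognize that Lemma~\ref{lem:Hpastloops} \emph{is} the bookkeeping, not merely ``of the same type'' --- quote it and you are done.
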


\begin{proof}
Consider first the case $s \eq g$. In view of the formula \erf{deftwist} for
the twist in \HBimod, it follows immediately from Lemma \ref{lem:Hpastloops}
that the left hand side of \erf{0.40} differs from the right hand side only by
multiplications from the left with $\apo \cir v$ and from the right with $\apoi
\cir v^{-1}$, both acting on the same $H$-line. Since $v$ is left invariant
by the antipode and is central, these two modifications cancel each other.
\\
The argument for $s \,{<}\, g$ is completely analogous.
\end{proof}

The next observations will allow us to establish invariance under
the action of Dehn twists around the cycles $a_k$.

\begin{lemma}
We have
  \be
  \Corr 211 \circ (\QQ \oti \id_F) = \Corr 211
  \labl{QQ_inv}
with $\QQ$ as introduced in {\rm \erf{QHH}}.
\end{lemma}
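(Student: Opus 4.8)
The plan is to insert the explicit expressions for the two ingredients — formula \erf{QQ_Haa} for $\QQ$ and formula \erf{CorrgnH} with $g\eq2$ for $\Corr 211$ — and then to peel off, step by step, the monodromy matrices $Q$, $Q^{-1}$ and the antipodes that $\QQ$ carries, until the diagram has collapsed back to $\Corr 211$.

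As a preliminary simplification I would note, using the unit property of $\eta_F$ together with the Frobenius property and associativity of $F$ exactly as in \erf{111from110}, that $\Corr 211 \eq m_F \cir (\Corr 210 \oti \id_F)$; hence $\Corr 211 \cir (\QQ\oti\id_F) \eq m_F \cir \big( (\Corr 210\cir\QQ) \oti \id_F \big)$ and it suffices to prove $\Corr 210 \cir \QQ \eq \Corr 210$, where $\Corr 210 \eq \Cor 21$ is the $g\eq2$, $n\eq1$ correlator of \erf{Sk_morph}. (Alternatively one may work directly with $\Corr 211$; the moves are identical.)

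The heart of the matter is then a graphical computation. In the diagram for $\Corr 210\cir\QQ$ the two ``inner'' tensor legs of $K\oti K \eq \Hs{\otimes}\Hs{\otimes}\Hs{\otimes}\Hs$, namely those on which the monodromy matrices of $\QQ$ act, enter — via the partial monodromy action \erf{Lyubact_HKH}, hence via the right-adjoint actions $\ohrad$ — iterated coproducts of $H$. Transporting the antipodes contained in $\QQ$ and in $\ohrad$ past these coproducts and products, using the anti-(co)algebra morphism property of $\apo$ and the identity $(\apo\oti\apo)\cir Q \eq \tauHH\cir Q$, puts one precisely in the situation treated in Section \ref{sec:lemmata}: conjugation of a coproduct, or of a two- or threefold coproduct, by monodromy matrices (with an antipode possibly inserted) equals the bare (iterated) coproduct — this is \erf{removeQs}, \erf{Q2Delta}, \erf{Q3Delta}, \erf{Pic_QQ3} and \erf{inv_Qq3}. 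Using these identities the monodromy matrices and antipodes of $\QQ$ either cancel pairwise or are absorbed, and the residual adjoint action landing on an integral $\Lambda$ is finally removed by Lemma \ref{lem:reppastint} together with $\apo\cir\Lambda \eq \Lambda$ (unimodularity), exactly as in the proof of Lemma \ref{lem:Hpastloops}. What is left is \erf{CorrgnH} with $g\eq2$, i.e.\ $\Corr 210$ (resp.\ $\Corr 211$), which is the assertion.

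The step I expect to be the main obstacle is the bookkeeping: $\Corr 211$ carries two nested copies of $\ohrad$ and two integrals, while $\QQ$ contributes two monodromy matrices and two antipodes, so the moves must be ordered carefully — each $Q$ has to be brought next to a coproduct, where the identities of Section \ref{sec:lemmata} become available, before it can be eliminated. Conceptually the lemma merely says that $\QQ$ is the double braiding of the two handle-copies of $K$ acting on the \emph{commutative} Frobenius algebra $F$ and is therefore trivial; the computation sketched above is the concrete realisation of that fact.
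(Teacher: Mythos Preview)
Your proposal is correct and follows essentially the same route as the paper: insert the explicit expressions \erf{QQ_Haa} and \erf{CorrgnH}, rearrange using the integral identities \erf{Hopf_Frob_trick2} (which underlie Lemma~\ref{lem:reppastint}) and the explicit form of $\ohrad$, and then eliminate the monodromy matrices via the coproduct identity \erf{Pic_QQ3} together with $(\apo^2{\otimes}\apo^2)\cir Q \eq Q$. The paper works directly with $\Corr 211$ rather than first reducing to $\Corr 210$, and it is more targeted than your sketch --- among the list \erf{removeQs}, \erf{Q2Delta}, \erf{Q3Delta}, \erf{Pic_QQ3}, \erf{inv_Qq3} it is specifically \erf{Pic_QQ3} that does the job here --- but these are differences of presentation, not of substance.
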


\begin{proof}
According to \eqref{CorrgnH} and \eqref{QQ_Haa} we have
  \eqpic{Pic_QQ1} {320} {71} {\setulen80
  \put(-53,88)     {$ \Corr 211 \circ (\QQ \oti\id_F) ~\equiv $}
    \put(130,0) {\includepichtft{132d}
  \put(2,-11.2)    {\sse$ K $}
  \put(8.8,33)     {\sse$ \QQ $}
  \put(20,-11.2)   {\sse$ K $}
  \put(47.6,-11.2) {\sse$ F $}
  \put(48.8,197)   {\sse$ F $}
  \put(-7,130)     {\begin{turn}{90}{\catpicH}\end{turn}}
  }
  \put(226,88)    {$=$}
    \put(272,0) {\includepichtft{134aA}
  \put(-8.8,-11.2) {\sse$ \Hss $}
  \put(11,-11.2)   {\sse$ \Hss $}
  \put(43.7,-11.2) {\sse$ \Hss $}
  \put(60,-11.2)   {\sse$ \Hss $}
  \put(111.2,-11.2){\sse$ \Hss $}
  \put(39.5,104.1) {\sse$ \Lambda $}
  \put(88.5,41.1)  {\sse$ \Lambda $}
  \put(88,7.5)     {\sse$ Q$}
  \put(62,81.6)    {\sse$ Q^{-1}$}
  \put(96.4,82.9)  {\sse$\ohrad$}
  \put(46.7,143.8) {\sse$\ohrad$}
  \put(164,193)    {\sse$ \Hss $}
  } }
Using the identities \erf{Hopf_Frob_trick2} and inserting the explicit form of
the right coadjoint action $\ohrad$, this can be rewritten as
  \Eqpic{Pic_QQ2} {154} {81} { \put(0,5) { \setulen90
  \put(-166,88)   {$ \Corr 211 \circ (\QQ \oti\id_F) ~= $}
    \put(-5,0) {\INcludepichtft{134bA}{342}
  \put(-4,-10.2)   {\sse$ \Hss $}
  \put(8,-10.2)    {\sse$ \Hss $}
  \put(44.5,-10.2) {\sse$ \Hss $}
  \put(56,-10.2)   {\sse$ \Hss $}
  \put(98.2,-10.2) {\sse$ \Hss $}
  \put(21,102.5)   {\sse$ \Lambda $}
  \put(85,41.5)    {\sse$ \Lambda $}
  \put(74,14.5)    {\sse$ Q $}
  \put(57,76.7)    {\sse$ Q^{-1} $}
  \put(92.7,80.9)  {\sse$ \ohrad $}
  \put(42.7,143.5) {\sse$ \ohrad $}
  \put(151,199)    {\sse$ \Hss $}
  }
  \put(175,88)     {$=$}
    \put(212,0) {\INcludepichtft{134c}{342}
  \put(-4,-10.2)   {\sse$ \Hss $}
  \put(8,-10.2)    {\sse$ \Hss $}
  \put(44.5,-10.2) {\sse$ \Hss $}
  \put(56,-10.2)   {\sse$ \Hss $}
  \put(98.2,-10.2) {\sse$ \Hss $}
  \put(20,123.3)   {\sse$ \Lambda $}
  \put(68.8,54.5)  {\sse$ \Lambda $}
  \put(65.7,26.5)  {\sse$ Q $}
  \put(116.5,57.4) {\sse$ Q^{-1} $}
  \put(144,199)    {\sse$ \Hss $}
  } } }
Observing that $(\apo^2\oti\apo^2)\cir Q \eq Q$ and invoking the identity
\erf{Pic_QQ3}, comparison with \eqref{CorrgnH} establishes \erf{QQ_inv}.
\end{proof}

Again this result easily generalizes:

\begin{prop}\label{prop:tripleQQ}
For any triple of integers $g\,{\ge}\,2$ and $p,q\,{\ge}\,0$ we have
  \be
  \Corr gpq \circ \big( \id_K^{\otimes m} \oti
  \QQ \oti\id_K^{\otimes g-m-2} \oti \id_F^{\otimes q} \big)
  = \Corr gpq
  \ee
for all $m \iN \{0,1,...\,,g{-}2 \}$.
\end{prop}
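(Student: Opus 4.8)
The plan is to bootstrap from the single identity \erf{QQ_inv}, using the factorization of $\Corr gpq$ through products and coproducts of $F$ given by \erf{gpqfromg11}. First I would invoke \erf{gpqfromg11}, which reads $\Corr gpq \eq \Delta_F^{(p)} \cir \Corr g11 \cir (\id_K^{\otimes g}\oti m_F^{(q)})$. Since $\id_K^{\otimes m}\oti\QQ\oti\id_K^{\otimes g-m-2}\oti\id_F^{\otimes q}$ acts as the identity on all $F$-tensorands, the interchange law for $\oti$ lets one commute $\id_K^{\otimes g}\oti m_F^{(q)}$ past it, while post-composition with $\Delta_F^{(p)}$ is unaffected. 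Hence the whole statement reduces to
\be
\Corr g11\cir\big(\id_K^{\otimes m}\oti\QQ\oti\id_K^{\otimes g-m-2}\oti\id_F\big)=\Corr g11
\ee
for every $g\,{\ge}\,2$ and every $m\iN\{0,1,...\,,g{-}2\}$.

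To prove this I would exploit the tower structure of the linear map \erf{CorrgnH}: reading upward from the $F$-input, the $g$ copies of $K$ act on the $F$-line one after another -- the rightmost first, the leftmost last -- and the contribution of each individual handle uses only its own $K$-tensorand, together with an integral $\Lambda$ and a right-adjoint action $\ohrad$ on the $F$-line. Consequently $\Corr g11$ factors as $\Psi_{\mathrm{above}}\cir\Xi\cir\Psi_{\mathrm{below}}$, where $\Psi_{\mathrm{below}}$ assembles the handles $m{+}3,...\,,g$, $\Psi_{\mathrm{above}}$ the handles $1,...\,,m$, and the middle block $\Xi$ is built from precisely the $K$-tensorands $m{+}1$ and $m{+}2$ acting consecutively on the intermediate $F$-line; up to transporting that pair of $K$-inputs into place (which is part of the same tower bookkeeping), $\Xi$ is a copy of $\Corr 211$. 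Neither $\Psi_{\mathrm{below}}$ nor $\Psi_{\mathrm{above}}$ touches the $K$-tensorands $m{+}1$ or $m{+}2$, so precomposing $\Corr g11$ with $\id_K^{\otimes m}\oti\QQ\oti\id_K^{\otimes g-m-2}\oti\id_F$ merely replaces the block $\Corr 211$ by $\Corr 211\cir(\QQ\oti\id_F)$, which by \erf{QQ_inv} is again $\Corr 211$. This establishes the displayed identity, and with it the proposition.

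The step I expect to require the most care is the tower factorization $\Corr g11\eq\Psi_{\mathrm{above}}\cir\Xi\cir\Psi_{\mathrm{below}}$ itself: one has to check, from the explicit form \erf{CorrgnH}, that the handle-actions really do nest in this bottom-to-top (equivalently right-to-left) order without the $K$-strands getting entangled among themselves before reaching their $\ohrad$-vertices, and that isolating the block formed by the handles $m{+}1,m{+}2$ reproduces $\Corr 211$ exactly after the relevant two $K$-inputs have been moved past the inputs of $\Psi_{\mathrm{below}}$. This is the same kind of bookkeeping that underlies Lemma \ref{lem:Hpastloops}, hence routine; every other ingredient is just \erf{gpqfromg11} together with the already-proven \erf{QQ_inv}.
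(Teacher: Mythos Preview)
Your proposal is correct and matches the paper's intended argument: the paper does not spell out a proof for this proposition at all, merely stating ``Again this result easily generalizes'' after establishing \erf{QQ_inv}, and the natural way to make that generalization precise is exactly what you do --- reduce from $(p,q)$ to $(1,1)$ via \erf{gpqfromg11}, then use the nested handle structure of \erf{CorrgnH} to isolate a $\Corr 211$ block on the two $K$-tensorands touched by $\QQ$ and apply \erf{QQ_inv}. Your caution about the tower bookkeeping is well placed but, as you note, is routine from the explicit picture \erf{CorrgnH}.
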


Now we present relations that will help us to show invariance under
the action of the generators $t_{j,k}$ of the mapping class group.

\begin{lemma}\label{lem:removeQq_t}
For any integer $m \,{\ge2}\,$ we have
  \be
  (\id_F\oti\tilde b_F)\circ(\Corr m20\oti\id_{^\vee\!F})
  \circ \QB_{K^{\otimes m-1}\otimes{}^\vee\!F}
  = (\id_F\oti\tilde b_F)\circ(\Corr m20\oti\id_{^\vee\!F})
  \labl{eq:removeQq_t}
in $\HomHH(K^{\otimes m}\oti{}^\vee\!F,F)$. That is, graphically:
  \eqpic{removeQq_t} {290} {81} {
    \put(0,0) {
    \put(5,0)   {\includepichtft{140gA}}
  \put(23.3,26.3)  {\sse$ \QB_{\HK^{\otimes m-1}_{}\otimes{}^\vee\!F} $}
  \put(8,-9.2)     {\sse$ K $}
  \put(21,-9.2)    {\sse$ \cdots $}
  \put(36,-9.2)    {\sse$ K $}
  \put(75,-9.2)    {\sse$ ^\vee\!F $}
  \put(52.6,174)   {\sse$ F $}
  }
  \put(126,82)     {$=$}
  \put(150,0) {
    \put(13,0)  {\includepichtft{140h}}
  \put(8,-9.2)     {\sse$ K $}
  \put(21,-9.2)    {\sse$ \cdots $}
  \put(36.4,-9.2)  {\sse$ K $}
  \put(77,-9.2)    {\sse$ ^\vee\!F $}
  \put(52.7,174)   {\sse$ F $}
  }
  \put(265,154)    {\catpicH}
  }
\end{lemma}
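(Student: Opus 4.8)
The plan is to reduce the claimed identity to a statement about how the handle Hopf algebra $K$ acts on the Frobenius algebra $F$, and then to exploit the commutativity and the explicit coalgebra structure of $F$. First I would unfold the left hand side using the explicit form \erf{Qq_X} of the half-monodromy $\QB_X$ applied to the bimodule $X \eq K^{\otimes m-1}\oti{}^\vee\!F$, together with the explicit expression \erf{CorrgnH} for $\Corr m20$ (recall $\Corr m20 \eq \Delta_F^{(2)}\cir\Corr m11\cir\id_{K^{\otimes m}}$, so concretely one gets the picture \erf{CorrgnH} with a final coproduct of $F$). Composing with $(\id_F\oti\tilde b_F)\cir(-\oti\id_{^\vee\!F})$ turns the outgoing ${}^\vee\!F$-leg back around, so that the net effect of $\QB_{K^{\otimes m-1}\otimes{}^\vee\!F}$ is to apply the partial monodromy action $\rho^K_F$ \erf{Lyubact_HKH} of the very last copy of $K$ on one of the $F$-outputs of $\Corr m11$, after that output has been fed through a product $m_F$ with the other one. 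In other words, the left hand side should be rewritten as $\Corr m20$ with an extra insertion of $\rho^K_{K^{\otimes m-1}}$ threaded through, i.e.\ a situation of exactly the shape handled by Lemma \ref{lem:Hpastloops}.

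The key step is then to invoke Lemma \ref{lem:Hpastloops} (for $p \eq m-1$) to pull the $H$-line coming from the monodromy matrices in $\QB$ past the chain of $\rho^K_F$-vertices, converting it into the right-adjoint action $\ohrad$ of $H$ on the integral, acting on the appropriate $\Hs$-legs. After this one is left with an expression in which the relevant $Q$ and $Q^{-1}$ appear only through the Drinfeld-type maps inside $\rho^K_F$, and in which there are two copies of an iterated coproduct of $H$ which, being bimodule morphisms, can be commuted past the actions exactly as in the final paragraph of the proof of Lemma \ref{lem:Hpastloops}. The last ingredient is to use that $F$ is a \emph{commutative} symmetric Frobenius algebra: by Lemma \ref{lem:2prod_mon} (with $A \eq F$) the two $m_F$-vertices that receive the twice-acted leg can be rearranged so that the monodromy contribution collapses, together with $(\apo\oti\apo)\cir Q \eq \tauHH\cir Q$ and $\apo\cir\Lambda \eq \Lambda$ by unimodularity, precisely as in the proof of \erf{QQ_inv}. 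Comparing the resulting picture with the right hand side — which is just \erf{CorrgnH} post-composed with $\Delta_F$ and then with $(\id_F\oti\tilde b_F)\cir(-\oti\id_{^\vee\!F})$ — finishes the proof.

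The main obstacle I expect is purely bookkeeping: matching up the many $\Hs$-legs and the braidings hidden inside $\rho^K_F$ and inside $\QB_{K^{\otimes m-1}\otimes{}^\vee\!F}$ when the ${}^\vee\!F$-leg is bent around, so that the application of Lemma \ref{lem:Hpastloops} is literally legitimate. Concretely one must check that the monodromy matrices produced by $\QB$ act on the $H$-line in the way Lemma \ref{lem:Hpastloops} demands (left action $\rho_{K^{\otimes p}}$ composed with right action $\ohr_{K^{\otimes p}}$ on the $K^{\otimes p}$-bundle), which amounts to carefully tracking the left- versus right-$H$-actions on the dual bimodule ${}^\vee\!F$ via the formulas \erf{Hbim_dualactions} and \erf{Fv_act}. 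Once that identification is in place, the remaining manipulations are all instances of lemmas already proved above, so no genuinely new input is needed.
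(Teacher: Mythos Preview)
Your plan is essentially the paper's own argument: unfold the left hand side in \Vectk\ using \erf{Qq_X}, \erf{CorrgnH} and the Frobenius data \erf{pic-Hb-Frobalgebra}, then invoke Lemma~\ref{lem:Hpastloops} to pull the $H$-line produced by $\QB$ past the chain of $\rho^K_F$-loops, and finally kill the remaining monodromy contribution by a $Q$-versus-$\Delta$ identity.

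One point to tighten: the last step is \emph{not} literally ``as in the proof of \erf{QQ_inv}''. After applying Lemma~\ref{lem:Hpastloops}, unfolding $\ohrad$, and using \erf{Hopf_Frob_trick2} together with \erf{Fv_act} for the bent ${}^\vee\!F$-leg, the residual expression carries a cointegral $\lambda$ and a different pattern of antipodes than what appears in the proof of \erf{QQ_inv}. The identity that actually collapses it is \erf{Q3Delta}, not \erf{Pic_QQ3}; both are proved in the same lemma, but they are distinct four-output identities. Likewise, Lemma~\ref{lem:2prod_mon} is not invoked directly here --- it was already cashed out earlier as the identity \erf{Q2Delta}, and what you need at this point is its sibling \erf{Q3Delta}. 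So the bookkeeping worry you flag is real, and resolving it correctly forces you to the slightly different identity; once you make that substitution your plan coincides with the paper's proof.
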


\begin{proof}
Denote the left hand side of \erf{eq:removeQq_t} by $\Phi$. We first
insert the structural morphisms for $F$ from \erf{pic-Hb-Frobalgebra} and
the expressions \erf{CorrgnH} for $\Corr m11$ and \erf{Qq_X} for $\QB_X$ and
invoke Lemma \ref{lem:Hpastloops}, and in a second step we insert the
explicit expression for $\ohrad$ and use the identities \erf{Hopf_Frob_trick2}
and \erf{Fv_act}. This yields
  \Eqpic{inv_tjk_1} {420} {116} { \put(0,17){
  \put(-15,94)     {$ \Phi ~= $}
    \put(37,0) {\includepichtft{140cA}\setulen80
  \put(-7,-11.2)   {\sse$ \Hss $}
  \put(8.2,-11.2)  {\sse$ \Hss $}
  \put(60.5,-11.2) {\sse$ \Hss $}
  \put(74.5,-11.2) {\sse$ \Hss $}
  \put(28.2,-11.2) {\sse$ \cdots\cdots$}
  \put(229.5,-11.2){\sse$ H $}
  \put(229,297)    {\sse$ \Hss $}
  \put(30,195.5)   {\sse$ \Lambda$}
  \put(100,132)    {\sse$ \Lambda$}
  \put(216,188)    {\sse$ \lambda$}
  \put(173,3.5)    {\sse$ Q$}
  \put(120,49,2)   {\sse$ Q^{-1}$}
  }
  \put(253,94)     {$=$}
    \put(293,0) {\includepichtft{140dA}\setulen80
  \put(-7,-11.2)   {\sse$ \Hss $}
  \put(8.2,-11.2)  {\sse$ \Hss $}
  \put(60.5,-11.2) {\sse$ \Hss $}
  \put(74.5,-11.2) {\sse$ \Hss $}
  \put(28.2,-11.2) {\sse$ \cdots\cdots$}
  \put(194.5,-11.2){\sse$ H $}
  \put(193,297)    {\sse$ \Hss $}
  \put(19.8,207.5) {\sse$ \Lambda$}
  \put(100,132)    {\sse$ \Lambda$}
  \put(179.5,117.2){\sse$ \lambda$}
  \put(136.4,4)    {\sse$ Q$}
  \put(118.1,36.1) {\sse$ Q^{-1}$}
  } } }
Application of the identity \erf{Q3Delta} to the right hand side of
\erf{inv_tjk_1} results in the right hand side of \erf{eq:removeQq_t},
\end{proof}

Composition of \erf{removeQq_t} with $\id^{}_{K^{\otimes m}_{}} \oti
{}^\vee\!\eps_F$ gives

\begin{cor}\label{cor:BKKm-1}
For any integer $g \,{\ge2}\,$ we have
  \be
  \Corr g10 \circ \QB_{K^{\otimes g-1}_{}} = \Corr g10 \,.
  \ee
\end{cor}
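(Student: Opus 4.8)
The plan is to deduce this directly from Lemma~\ref{lem:removeQq_t}: I would precompose both sides of the identity \erf{eq:removeQq_t} (taken for $m\eq g\,{\ge}\,2$) with the morphism $\id^{}_{K^{\otimes g}_{}}\oti{}^\vee\!\eps_F\colon K^{\otimes g}\to K^{\otimes g}\oti{}^\vee\!F$, where ${}^\vee\!\eps_F\colon\one\to{}^\vee\!F$ denotes the left dual of the counit $\eps_F\colon F\to\one$, and then simplify each side.

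I would first treat the building block $(\id_F\oti\tilde b_F)\cir(\Corr g20\oti\id_{{}^\vee\!F})\cir(\id^{}_{K^{\otimes g}_{}}\oti{}^\vee\!\eps_F)$ that, after this precomposition, appears on both sides of \erf{eq:removeQq_t}. Pushing the cap $\tilde b_F$ down so that it contracts directly against ${}^\vee\!\eps_F$, the duality axioms of the sovereign category \HBimod\ identify $\tilde b_F\cir(\id_F\oti{}^\vee\!\eps_F)$ with $\eps_F$, so this block equals $(\id_F\oti\eps_F)\cir\Corr g20$. Invoking \erf{gpqfromg11}, which gives $\Corr g20\eq\Delta_F\cir\Corr g10$, together with the counit axiom $(\id_F\oti\eps_F)\cir\Delta_F\eq\id_F$ of the Frobenius coalgebra $F$, the block reduces to $\Corr g10$.

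Consequently the right hand side of \erf{eq:removeQq_t}, precomposed with $\id^{}_{K^{\otimes g}_{}}\oti{}^\vee\!\eps_F$, is just $\Corr g10$. For the left hand side I would in addition use that the family $\QB_Y$ defined in \erf{QHX} is natural in the object $Y$: in \erf{QHX} the $Y$-strand enters only through the double braiding $c_{Y,-}\cir c_{-,Y}$, which is natural, so $(\id_K\oti\phi)\cir\QB_Y\eq\QB_{Y'}\cir(\id_K\oti\phi)$ for every morphism $\phi\colon Y\to Y'$. Applying this with $\phi\eq\id^{}_{K^{\otimes g-1}_{}}\oti{}^\vee\!\eps_F$ converts $\QB_{K^{\otimes g-1}\otimes{}^\vee\!F}\cir(\id^{}_{K^{\otimes g}_{}}\oti{}^\vee\!\eps_F)$ into $(\id^{}_{K^{\otimes g}_{}}\oti{}^\vee\!\eps_F)\cir\QB_{K^{\otimes g-1}_{}}$, so by the simplification above the precomposed left hand side equals $\Corr g10\cir\QB_{K^{\otimes g-1}_{}}$. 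Equating the two sides via Lemma~\ref{lem:removeQq_t} then yields $\Corr g10\cir\QB_{K^{\otimes g-1}_{}}\eq\Corr g10$.

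The whole argument is essentially bookkeeping around Lemma~\ref{lem:removeQq_t}; the only steps deserving a little care are verifying that $Y\mapsto\QB_Y$ is genuinely natural, so that the cap ${}^\vee\!\eps_F$ really may be pushed through $\QB$, and keeping the sovereign-duality conventions for $\tilde b_F$ and ${}^\vee\!\eps_F$ consistent so that indeed $\tilde b_F\cir(\id_F\oti{}^\vee\!\eps_F)\eq\eps_F$.
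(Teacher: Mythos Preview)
Your argument is correct and coincides with the paper's own reasoning: the paper obtains the corollary precisely by composing \erf{removeQq_t} with $\id^{}_{K^{\otimes m}_{}}\oti{}^\vee\!\eps_F$, exactly as you do. The additional steps you spell out (naturality of $\QB_Y$, the duality identity for $\tilde b_F$ and ${}^\vee\!\eps_F$, and the counit axiom) are the implicit bookkeeping behind that one-line deduction.
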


\medskip

We can combine the previous results to omit not only the twist of any tensor
power $K^{\otimes m}$ of $K$, but also the one of
$K^{\otimes m}\oti{}^{\vee\!}\!F$:

\begin{lemma}\label{lem:twist_S_remove}
For any integer $m\,{>}\,0$ the equalities
  \eqpic{twist_S_remove} {420} {100} {\setulen80
    \put(0,0) {\includepichtft{140e}
  \put(-5,-11.2)   {\sse$ K $}
  \put(11,-11.2)   {\sse$ \cdots$}
  \put(28,-11.2)   {\sse$ K $}
  \put(50,265)     {\sse$ F $}
  \put(114,265)    {\sse$ F $}
  \put(1.5,39)     {\sse$ \theta_{\!K^{\otimes m}_{}\otimes{}^{\vee\!}\!F} $}
  }
  \put(153,130)    {$=$}
    \put(180,0) {\includepichtft{140i}
  \put(-5,-11.2)   {\sse$ K $}
  \put(11,-11.2)   {\sse$ \cdots$}
  \put(28,-11.2)   {\sse$ K $}
  \put(50,265)     {\sse$ F $}
  \put(114,265)    {\sse$ F $}
  }
  \put(326,130)    {$=$}
    \put(357,0) {\includepichtft{140f}
  \put(-5,-11.2)   {\sse$ K $}
  \put(11,-11.2)   {\sse$ \cdots$}
  \put(28,-11.2)   {\sse$ K $}
  \put(50,265)     {\sse$ F $}
  \put(73,265)     {\sse$ F $}
  \put(114,233)    {\catpicH}
  } }
hold in $\HomHH(K^{\otimes m}_{},F\oti F)$.
\end{lemma}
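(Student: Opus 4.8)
The plan is to read off the twist $\theta_{K^{\otimes m}\oti\Vee F}$ from the ribbon structure of \HBimod\ and then discard the resulting pieces one after another, using the results already in hand. By the defining property of the twist in a ribbon category,
\[
  \theta_{K^{\otimes m}\oti\Vee F}
  \eq \big(\theta_{K^{\otimes m}}\oti\theta_{\Vee F}\big)\cir
      c_{\Vee F,\,K^{\otimes m}}^{}\cir c_{K^{\otimes m},\,\Vee F}^{}\,,
\]
equivalently (and this is the form best suited to \HBimod) one may use \erf{deftwist}, i.e.\ that $\theta_{K^{\otimes m}\oti\Vee F}$ acts by the ribbon element $v$ on the left $H$-line of the bimodule $K^{\otimes m}\oti\Vee F$ and by $v^{-1}$ on the right one, so that the ``braiding correction'' is exactly the $Q^{-1}$ in $\Delta(v)\eq(v\oti v)\,Q^{-1}$ from \erf{def-ribbon}. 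Since $F$ is a commutative symmetric Frobenius algebra we have $\theta_F\eq\id_F$, hence $\theta_{\Vee F}\eq\id_{\Vee F}$ (the twist of a dual is the dual of the twist). Thus the twist degenerates to the twist $\theta_{K^{\otimes m}}$ of the $K$-legs together with the full monodromy of $K^{\otimes m}$ past $\Vee F$.

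Next I would eliminate that monodromy factor. Using the definition \erf{QHX} of the morphisms $\QB_Y$ in terms of the dinatural family of the coend $K$ together with the explicit braiding \erf{bibraid} of \HBimod, this monodromy -- which after the first step sits directly above the coevaluation $\tilde b_F$ -- merges with the $\QB_{K^{\otimes m-1}\oti\Vee F}$-block already present on the left-hand side of \erf{twist_S_remove} into a single morphism that is again of the $\QB$-type; by Lemma \ref{lem:removeQq_t} (or, after composing with ${}^\vee\!\eps_F$, by Corollary \ref{cor:BKKm-1}) this whole block may be erased once it stands underneath $\Corr m20$. What then remains is the bare twist $\theta_{K^{\otimes m}}$ on the $K$-legs; writing $\Corr m20\eq\Delta_F\cir\Corr m11\cir(\id_{K^{\otimes m}}\oti\eta_F)$ by \erf{gpqfromg11} and using naturality of the twist to bring $\theta_{K^{\otimes m}}$ down to $\Corr m11$, Lemma \ref{lem:inv_twist} (with $r\eq0$, $s\eq m$, $t\eq0$) discards it as well. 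This gives the first equality in \erf{twist_S_remove}.

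For the second equality no twist is any longer in the way, and the $\Vee F$-strand introduced by $\tilde b_F$ can be folded up by the snake identities for the right duality of $F$; reading off the resulting morphism and, if needed, rearranging the remaining copies of $m_F$ and $\Delta_F$ with the help of associativity, coassociativity, commutativity and the Frobenius relation of $F$ (Lemma \ref{lem:2prod_mon} being convenient here) yields the compact expression on the right of \erf{twist_S_remove}.

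The step I expect to be the main obstacle is the second one -- matching the monodromy of $K^{\otimes m}$ with $\Vee F$ against a $\QB$-block so that Lemma \ref{lem:removeQq_t} applies: this forces one to keep careful track of the universal property of the coend $K$, of the explicit braiding \erf{bibraid}, and of how the $m$ separate $K$-strands thread past $\Vee F$. In practice it may well be cleanest to expand both sides of \erf{twist_S_remove} as \ko-linear maps, exactly as in the proofs of Lemmas \ref{lem:inv_twist} and \ref{lem:removeQq_t}, and to verify the identity there using \erf{CorrgnH}, \erf{Hpastloops} and the ``$Q$ versus $\Delta$'' relations \erf{Q2Delta}--\erf{inv_Qq3}; the elements $v$ then cancel against the $v^{-1}$ because $v$ is central and $\apo\cir v\eq v$, precisely as in the proof of Lemma \ref{lem:inv_twist}.
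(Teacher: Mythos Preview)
Your opening moves coincide with the paper's: decompose
$\theta_{K^{\otimes m}\otimes\Vee F}
 =(\theta_{K^{\otimes m}}\oti\theta_{\Vee F})\cir c_{\Vee F,K^{\otimes m}}\cir c_{K^{\otimes m},\Vee F}$,
drop $\theta_{\Vee F}$ because $\theta_F\eq\id_F$, and remove
$\theta_{K^{\otimes m}}$ via Lemma~\ref{lem:inv_twist}. What remains is the
monodromy of $K^{\otimes m}$ with $\Vee F$, and it is here that your argument
goes astray.

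You assert that the left picture of \erf{twist_S_remove} already contains a
$\QB_{K^{\otimes m-1}\otimes\Vee F}$-block with which this monodromy can be
merged, so that Lemma~\ref{lem:removeQq_t} applies. It does not: the morphism
in \erf{twist_S_remove} carries \emph{only} the twist, not a partial-monodromy
block. (This is visible from how the lemma is used in the proof of
Lemma~\ref{lem:t_act_Cor}: there the $\QB$-block is eliminated \emph{first},
by Lemma~\ref{lem:removeQq_t}, and only afterwards is
Lemma~\ref{lem:twist_S_remove} invoked to dispose of the twist.) Even setting
the misreading aside, the monodromy
$c_{\Vee F,K^{\otimes m}}\cir c_{K^{\otimes m},\Vee F}$ is not of $\QB$-type:
by \erf{QHX} the endomorphism $\QB_Y$ is induced by the monodromy of the
\emph{arguments} $X$ of the dinatural family $\iHK_X$ with $Y$, whereas here
the coend object $K$ itself is braided past $\Vee F$. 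So
Lemma~\ref{lem:removeQq_t} is not the right tool, and the ``merging'' step has
no content as stated.

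What the paper does instead is cleaner and purely categorical: use
\emph{naturality} of the braiding (hence of the monodromy) to slide the
monodromy of $K^{\otimes m}$ with $\Vee F$ upward through the morphism
$\Corr m20$ and the accompanying $F$-loop. Once the monodromy lives only among
$F$- and $\Vee F$-strands near the top, one has arrived at the middle picture;
the passage from there to the right picture is then a straightforward
commutative-Frobenius rearrangement \`a la Lemma~\ref{lem:2prod_mon}, much as
you outline for the second equality. Your fallback of unpacking everything as
$\ko$-linear maps would eventually succeed too, but the naturality argument is
the intended and far shorter route.
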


\begin{proof}
Using the compatibility between braiding and twist, as well as that $F$ has
trivial twist and that according to Lemma \ref{lem:inv_twist} the twist of
$K^{\otimes m}_{}$ can be omitted, we can replace $\theta_{\!K^{\otimes m}_{}
\otimes{}^\vee\!F}$ by the monodromy between $K^{\otimes m}_{}$ and ${}^\vee\!F$.
Using naturality of the braiding and thus of monodromy, we can
push this monodromy through the $F$-loops and thus arrive
at the middle picture. The second equality follows because $F$ is commutative
Frobenius, in the same way as in the proof of Lemma \ref{lem:2prod_mon}.
\end{proof}

\begin{lemma}\label{lem:t_act_Cor}
For any pair of integers $g,n\,{>}\,0$ we have
  \eqpic{t_act_Cor} {380} {172} {
  \put(36,170)     {$ \Cor gn ~= $}
  \put(105,10) {\includepichtft{140bA}
  \put(49.3,339)   {\sse$ \overbrace{\hspace*{6em}}^{j~ \rm factors} $}
  \put(139.3,339)  {\sse$ \overbrace{\hspace*{6em}}^{n-j~ \rm factors} $}
  \put(51,64.7)    {\sse$ \QB_{K^{\otimes m-1}\otimes{}^\vee\!\BF}$}
  \put(95,30)      {\sse\begin{turn}{40} $ \theta_{\!K^{\otimes m-1}_{}
                    \otimes{}^{\vee\!}\!F} $\end{turn}}
  \put(31,38)      {\sse$\TK$}
  \put(-4,-8.5)    {\sse$ K $}
  \put(13,-8.5)    {\sse$\cdots$}
  \put(31,-8.5)    {\sse$ K $}
  \put(51,-8.5)    {\sse$ K $}
  \put(62,-8.5)    {\sse$\cdots$}
  \put(77,-8.5)    {\sse$ K $}
  \put(48,329)     {\sse$ F $}
  \put(66,329)     {\sse$ F $}
  \put(78,329)     {\sse$\cdots$}
  \put(95,329)     {\sse$ F $}
  \put(138,329)    {\sse$ F $}
  \put(156,329)    {\sse$ F $}
  \put(168,329)    {\sse$\cdots$}
  \put(185,329)    {\sse$ F $}
  \put(215,307)    {\catpicH}
  \put(-9.7,-12)   {\sse$ \underbrace{\hspace*{5em}}_{g-m+1~\rm factors~} $}
  \put(45,-12)     {\sse$ \underbrace{\hspace*{3.7em}}_{~~m-1~ \rm factors} $}
  } }
for all $m\eq 1,2,...\,,g$
\end{lemma}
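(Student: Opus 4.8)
The plan is to prove the claimed identity by starting from the right-hand side of \eqref{t_act_Cor} and stripping away, one at a time, the auxiliary insertions $\QB_{K^{\otimes m-1}\otimes{}^\vee\!F}$, $\theta_{K^{\otimes m-1}\otimes{}^\vee\!F}$ and $\TK$ together with the ${}^\vee\!F$-loop, each such reduction being a direct application of one of the removability results established above. When all of them are gone, what remains is precisely the defining graph \eqref{Sk_morph} of $\Cor gn$. Geometrically the right-hand side is the invariant $\Cor gn$ re-expressed along a skeleton in which the edge entering the $m$-th handle has been re-routed through a loop around the last $n-j$ outgoing legs, which is the configuration needed to compare with the $t_{j,m}$-action \eqref{LyubactC3}; the lemmas below turn this re-routing into a chain of equalities.

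First I would deal with the ${}^\vee\!F$-line and the monodromy $\QB_{K^{\otimes m-1}\otimes{}^\vee\!F}$ sitting on it. Using commutativity and the Frobenius property of $F$ to reorganize the comultiplication vertices feeding the last $n-j$ outgoing $F$-legs, the neighbourhood of the $m$-th handle in the right-hand picture becomes exactly the configuration on the left-hand side of \eqref{removeQq_t} (with the remaining $g-m$ handles and the first $j$ outgoing legs carried along inertly), so Lemma~\ref{lem:removeQq_t} removes the insertion $\QB_{K^{\otimes m-1}\otimes{}^\vee\!F}$.

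Next, the twist $\theta_{K^{\otimes m-1}\otimes{}^\vee\!F}$ is removed by the argument of Lemma~\ref{lem:twist_S_remove}: by compatibility of twist and braiding together with $\theta_F \eq \id_F$ it equals the monodromy of $K^{\otimes m-1}$ against ${}^\vee\!F$ precomposed with $\theta_{K^{\otimes m-1}}$; the latter drops out by Lemma~\ref{lem:inv_twist}, and the former is pushed along the ${}^\vee\!F$-line through the $F$-loops by naturality of the braiding and then absorbed using commutativity and the Frobenius property of $F$, which straightens the ${}^\vee\!F$-line into a plain zig-zag $\tilde d_F\cir(\id\oti\tilde b_F)$ that disappears. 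Finally the $\TK$ remaining on the $K$-input of the $m$-th handle is removed by Proposition~\ref{prop:triple}, leaving the graph \eqref{Sk_morph}. The cases $m\eq g$ and $m\,{<}\,g$, and all values of $j$, are handled uniformly because each lemma invoked is stated for an arbitrary tensor position.

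The step I expect to be the main obstacle is purely diagrammatic: one has to draw \eqref{t_act_Cor} precisely enough to be certain that the ${}^\vee\!F$-loop passes the handle edges and the product/coproduct vertices of $F$ in exactly the order -- and with exactly the over/under-crossings -- for which Lemmas~\ref{lem:removeQq_t}, \ref{lem:inv_twist} and \ref{lem:twist_S_remove} and Proposition~\ref{prop:triple} apply verbatim, and that the reorganization of the $F$-vertices used to reach the pattern of \eqref{removeQq_t} does not move the $\TK$-insertion. Once the picture is fixed, every individual reduction is one of the quoted statements and no genuinely new computation is required.
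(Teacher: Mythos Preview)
Your proposal is correct and follows essentially the same route as the paper: strip off $\QB_{K^{\otimes m-1}\otimes{}^\vee\!F}$ via Lemma~\ref{lem:removeQq_t}, then the twist and the ${}^\vee\!F$-loop via Lemma~\ref{lem:twist_S_remove} (whose proof you paraphrase inline, using Lemma~\ref{lem:inv_twist}), and finally $\TK$ via Proposition~\ref{prop:triple}. The only difference is cosmetic---the paper states the removals of $\TK$ and of the twist in the opposite order and is terser about the Frobenius reorganization you spell out---but the ingredients and logic are identical.
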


\begin{proof}
As compared to the expression \erf{Sk_morph} for $\Cor gn$, on the right
hand side three additional pieces are present:
the endomorphism $\TK$ applied to one copy of $K$,
a twist endomorphism of $K^{\otimes m}_{}\oti{}^{\vee\!}\!F$, and a
partial monodromy $\QB_{K^{\otimes m-1}\otimes{}^\vee\!\BF}$. Now the latter
acts trivially by Lemma \ref{lem:removeQq_t}. After omitting this part, we
can invoke Proposition \ref{prop:triple} and Lemma \ref{lem:twist_S_remove}
(combined with the Frobenius property of $F$) to omit $\TK$ and
$\theta_{\!K^{\otimes m}_{}\otimes{}^{\vee}\!F}$, respectively, as well.
\end{proof}

As a final piece of information we give the graphical description of the
morphisms $\LAtjk(f)$ \erf{LyubactC3} that were introduced in formula
\erf{LyubactC3} for $f\iN \HomC(K^{\otimes g},X_1\oti\cdots\oti X_n)$:
  \eqpic{t_act} {290} {141} {
  \put(0,139)      {$ \LAtjk(f) ~= $}
    \put(81,0) {\Includepichtft{140aA}
  \put(33,222.7)   {$ f $}
  \put(42.3,176.2) {\sse$\QB_{K^{\otimes k-1}\otimes
                    {}^\vee\!X_{n}\otimes\cdots\otimes{}^\vee\!X_{j+1}}$}
  \put(38.5,85.5)  {\sse$\TK $}
  \put(1.2,-9.2)   {\sse$ K $}
  \put(12,-9.2)    {\sse$\cdots$}
  \put(25,-9.2)    {\sse$ K $}
  \put(37.8,-9.2)  {\sse$ K $}
  \put(46.9,-9.2)  {\sse$ K $}
  \put(59.8,-9.2)  {\sse$\cdots$}
  \put(75,-9.2)    {\sse$ K $}
  \put(-.2,296)    {\sse$ X_1$}
  \put(12,296)     {\sse$ \cdots$}
  \put(23.4,296)   {\sse$ X_j$}
  \put(147,296)    {\sse$ X_{j+1}$}
  \put(173,296)    {\sse$ \cdots$}
  \put(190,296)    {\sse$ X_{n}$}
  } }

\medskip

We have now all ingredients at our hands that are needed to finish the proof
of Theorem \ref{thm:main}.

\begin{proof}~\\[3pt]
Invoking the presentation of the mapping class group $\Mapgn$ described in
Section \ref{ssec:mpg}, invariance of the correlators $\Cor gn$ under the
action $\pi_{g:n}^{F^{\otimes n}}$ of $\Mapgn$ amounts to invariance under
$\pi_{g:n}^{F^{\otimes n}}(\gamma)$ for $\gamma\iN\Mapgn$ any of the
generators listed in Proposition \ref{Lyubact_prop}.
\\[4pt]
(i)\, $\gamma \eq \Ri$ ($i \eq 1,2,...\,,n$):
Invariance follows directly from the fact that $F$ is has trivial twist.
\\[4pt]
(ii)\, $\gamma \eq \omega_i$ ($i \eq 1,2,...\,,n{-}1$):
Invariance follows directly from the fact that $F$ is cocommutative.
\\[4pt]
(iii)\, $\gamma \eq S_k$ or $b_k$ or $d_k$ ($k\eq 1,2,...\,,g$):
In view of the explicit expressions \erf{LyubactC2} for $\LASk$, $\LAbk$
and $\LAdk$, invariance is implied by Proposition \ref{prop:triple}.
\\[4pt]
(iv)\, $\gamma \eq a_k$ ($k\eq 2,3,,...\,,g$):
By using Proposition \ref{prop:triple} twice we have
  \be
  \Cor gn \circ
  \big( \id_K^{\otimes m} \oti \TK^{} \oti \TK^{} \oti\id_K^{\otimes g-m-2} \big)
  = \Cor gn
  \ee
for all $m \eq 0,1,...\,,g{-}2$. Thus in view of the explicit expression
\erf{LyubactC2} for $\LAak$, Proposition \ref{prop:tripleQQ}
establishes the invariance.
\\[4pt]
(v)\, $\gamma \eq e_k$ ($k\eq 2,3,,...\,,g$):
Combining Proposition \ref{prop:triple} and  Lemma \ref{lem:inv_twist} we obtain
the equality $\Cor g1 \cir (\id_K^{\otimes g-k}\oti \TK \oti
\theta^{}_{K^{\otimes k-1}_{}}) \eq \Cor g1$
which, in turn, together with Corollary \ref{cor:BKKm-1} yields
  \be
  \Cor g1 \circ \big[ (\id_K^{\otimes g-k}\oti \TK \oti \theta_{K^{\otimes k-1}})
  \circ \QB_{K^{\otimes k-1}} \big] = \Cor g1 \,.
  \ee
This obviously generalizes to any number of $n \,{\ge}\, 0$ insertions and
thus in view of the explicit expression
\erf{LyubactC2} for $\LAek$ establishes invariance.
\\[4pt]
(vi)\, $\gamma \eq t_{j,k}$ ($j\eq 1,2,...\,,n{-}1$ and $k \eq 1,2,...\,,g$):
We first note that composing the dinatural family of the coend $K$ with the
partial monodromy $\QB_{}$ results in an ordinary monodromy. This implies that
  \be
  \QB_Y \circ (\id_K \oti f) = (\id_K \oti f) \circ \QB_X
  \ee
for any morphism $f\iN\Hom(X,Y)$. With the help of this relation (as well as
the coassociativity and Frobenius property of $F$) one can in particular push
partial monodromies through coproducts $\Delta_F$, and by functoriality of the
twist the same can be done with twist endomorphisms. As a consequence, the
morphism obtained by acting according to formula \erf{LyubactC3} and picture
\erf{t_act} with $t_{j,k}$ on $\Cor g n$ can be rewritten as the one on the
right hand side of \erf{t_act_Cor}. Thus invariance under $t_{j,k}$ reduces to
the assertion of Lemma \ref{lem:t_act_Cor}.
\end{proof}


\section{Invariants from ribbon automorphisms}\label{sec:omega}

We finally extend our result from $F$ to similar $H$-bimodules for which the
action of the Hopf algebra is twisted by a suitable automorphism.

\begin{defi}
A \emph{ribbon} automorphism of a ribbon Hopf algebra $H$ is a Hopf algebra
automorphism $\omega$ of $H$ that preserves the ribbon element and the
$R$-matrix of $H$,
  \be
  \omega\circ v= v \qquad\text{and}\qquad\text (\omega\oti\omega)\circ R=R\,.
  \ee
\end{defi}

For any Hopf algebra automorphism $\omega$ of a Hopf algebra $H$ we denote
by $\Fomega$ the bimodule obtained from $F$ by twisting the right $H$-action
by $\omega$, i.e.
  \be
  \Fomega:=(\Hs,\,\rho_F,\,\ohr_F\cir(\id_{\Hs}\oti\omega))\,.
  \labl{F_omega}
(An isomorphic bimodule is obtained when twisting instead the left $H$-action
by $\omega^{-1}$.) In Section 6 of \cite{fuSs3} the following is shown:

\begin{lemma}
Let $H$ be a factorizable ribbon Hopf algebra and $\omega$ a ribbon
automorphism of $H$.
\\[3pt]
{\rm (i)}\, The $H$-bimodule $\Fomega$ together with the
dinatural family of morphisms
  \be
  \imath^{\Fomega_{}}_X := (\omega^{-1})^* \circ \iHb_X
  \ee
is the coend of the functor from $\HMod\op{\times}\, \HMod$ to \HBimod\ that acts
on objects by assigning to a pair $\big((U,\rho_U), (V,\rho_V)\big)$ of left
$H$-modules the vector space $U^\vee{\otimes_\ko}\,V$ endowed with left $H$-ac\-tion
$[(\rho_U)_\vee^{}\cir(\omega^{-1}\oti\id_{U^*_{}})] \oti \id_V$ and right
$H$-action $\id_{U^*_{}} \oti [\rho_V \cir \tau_{V,H}\cir (\id_V\oti\apoi)]$.
\\[3pt]
{\rm (ii)}\, The linear maps defined in
\eqref{pic-Hb-Frobalgebra} equip the object $\Fomega$ in \HBimod\ with the
structure of a commutative symmetric Frobenius algebra, with trivial twist,
in \HBimod. Furthermore, $\Fomega$ is special iff $H$ is semisimple.
\end{lemma}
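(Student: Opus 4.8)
The plan is to obtain the statement for general $\omega$ from the already recalled case $\omega\eq\id_H$ by transport of structure along a ribbon autoequivalence of \HBimod. Introduce the endofunctor $\Xi\eq\Xi_\omega$ of \HBimod\ that keeps the underlying vector space and the right action of an $H$-bimodule and replaces its left action $\rho_X$ by $\rho_X\cir(\omega^{-1}\oti\id_X)$. Since $\omega^{-1}$ is a coalgebra morphism (being the inverse of a bialgebra automorphism), $(\omega^{-1}\oti\omega^{-1})\cir\Delta\eq\Delta\cir\omega^{-1}$ and $\eps\cir\omega^{-1}\eq\eps$, so $\Xi$ is compatible with the tensor product \erf{def-tp} and with the monoidal unit $(\ko,\eps,\eps)$; it is a strict monoidal functor, and it is invertible (with inverse the analogous functor built from $\omega$), hence a monoidal equivalence. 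Because $\omega$ is moreover a \emph{ribbon} automorphism we have $(\omega^{-1}\oti\omega^{-1})(R^{-1})\eq R^{-1}$ and $\omega^{-1}(v)\eq v$, and then \erf{u-R} gives $\omega^{-1}(u)\eq u$, so $\omega^{-1}$ also fixes the group-like element $t\eq uv^{-1}$; comparing with \erf{bibraid}, \erf{pivX} and \erf{deftwist} shows that $\Xi$ leaves the braiding, the sovereign structure and the twist unchanged (as linear maps). Thus $\Xi$ is a ribbon autoequivalence of \HBimod.

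Next I would identify the image of $F$ under $\Xi$. By the remark following \erf{F_omega}, twisting the left action of the coregular bimodule $F\eq(\Hs,\brho,\bohr)$ by $\omega^{-1}$ produces a bimodule isomorphic to $\Fomega$; using the explicit forms \erf{rhorho} of $\brho$ and $\bohr$ together with the fact that $\omega^{-1}$ commutes with $m$, $\apo$ and $\apoi$, one checks that the isomorphism is the transpose $(\omega^{-1})^*\colon\Xi(F)\xrightarrow{\,\cong\,}\Fomega$. For part (i), recall that $F$ is the coend $\int^U G(U,U)$ of the functor $G$ associated in \cite{fuSs3} with the case $\omega\eq\id_H$, with dinatural family $\iHb$. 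An equivalence preserves colimits, so $\Xi(F)$ is the coend of $\Xi\cir G$, and inspecting the left and right actions one sees that $\Xi\cir G$ is exactly the functor described in part (i). Transporting this coend along $(\omega^{-1})^*$ then exhibits $\Fomega$ as the coend of that functor with dinatural family $(\omega^{-1})^*\cir\iHb_X\eq\imath^{\Fomega}_X$, which is the assertion of (i).

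For part (ii) I would again exploit $\Xi$. As linear maps the structural morphisms $m\bico$, $\eta\bico$, $\Delta\bico$, $\eps\bico$ of \erf{pic-Hb-Frobalgebra} are unchanged by $\Xi$; moreover $\omega$ commutes with $m$, $\Delta$, $\eta$, $\eps$, and since it preserves the monodromy matrix it commutes with the Drinfeld map, whence by uniqueness of the (co)integral it fixes the integral $\Lambda$ and the cointegral $\lambda$ in the normalization used for \erf{fQS_Psi}. Consequently the transpose $(\omega^{-1})^*$ intertwines each of $m\bico$, $\eta\bico$, $\Delta\bico$, $\eps\bico$ with itself, so the commutative symmetric Frobenius structure of $F$, transported along the isomorphism $(\omega^{-1})^*$, is precisely the one given on $\Fomega$ by the formulas \erf{pic-Hb-Frobalgebra}. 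Since commutativity, symmetry, the Frobenius property and triviality of the twist are all preserved by the ribbon equivalence $\Xi$, the object $\Fomega$ with these morphisms is again a commutative symmetric Frobenius algebra with trivial twist. Finally, specialness of $\Fomega$ depends only on the composites $m\bico\cir\Delta\bico$ and $\eps\bico\cir\eta\bico$, which as linear maps coincide with those for $F$; hence $\Fomega$ is special iff $F$ is, i.e.\ (by the case $\omega\eq\id_H$) iff $H$ is semisimple.

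The only step requiring genuine care is the verification that $\Xi$ is a ribbon autoequivalence of \HBimod\ -- this is exactly where the hypotheses $(\omega\oti\omega)(R)\eq R$ and $\omega(v)\eq v$ are needed -- together with the closely related check that $\omega$ fixes the integral and the cointegral in the chosen normalization. Everything else is formal: once these facts are in place, both parts of the lemma follow by transport of structure from the already established case $\omega\eq\id_H$.
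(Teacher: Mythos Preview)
The paper does not actually prove this lemma; it introduces the statement with ``In Section 6 of \cite{fuSs3} the following is shown'' and gives no further argument. Your proposal therefore cannot be compared against a proof in the present text, but it does supply a correct self-contained argument. The transport-of-structure strategy via the autoequivalence $\Xi_\omega$ is sound: that $\Xi_\omega$ is strict monoidal and preserves the braiding \erf{bibraid}, the twist \erf{deftwist} and the sovereign isomorphisms \erf{pivX} follows directly from $\omega$ being a Hopf algebra automorphism fixing $R$ and $v$ (and hence $u$ and $t\eq uv^{-1}$); the check that $\Xi_\omega$ commutes with taking duals is equally routine. The identification $(\omega^{-1})^*\colon \Xi_\omega(F)\xrightarrow{\ \cong\ }\Fomega$ is verified from the explicit actions \erf{rhorho} exactly as you indicate, and both parts of the lemma then follow by functoriality of coends and by transport of the Frobenius structure.

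One step is slightly imprecise. Your assertion that $\omega$ ``commutes with the Drinfeld map, whence by uniqueness of the (co)integral it fixes $\Lambda$ and $\lambda$'' needs care: the actual intertwining relation is $\omega\cir f_Q \eq f_Q\cir(\omega^{-1})^*$, and combining this with uniqueness of the (co)integral and the normalizations $f_Q(\lambda)\eq\Lambda$, $\lambda\cir\Lambda\eq 1$ yields only $\lambda\cir\omega \eq \pm\lambda$ and $\omega\cir\Lambda \eq \pm\Lambda$. The sign is pinned down to $+1$ by the argument the paper gives later as Lemma~\ref{lambda-omega}, using the identity \erf{v-v-inv}. Once that is available, your verification that $(\omega^{-1})^*$ intertwines each of $m\bico,\eta\bico,\Delta\bico,\eps\bico$ with itself goes through, and the remainder of your argument for part~(ii) is unaffected.
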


To proceed we note the following identity:

\begin{lemma}
For any factorizable ribbon Hopf algebra $H$ the relation
  \be
  f_{Q^{-1}} \big( \lambda \cir m \cir (v \oti \id_H) \big)
  = (\lambda \cir v)\, v^{-1}
  \labl{v-v-inv}
involving the ribbon element $v$, the inverse of the monodromy matrix $Q$ and
the cointegral $\lambda$ holds.
\end{lemma}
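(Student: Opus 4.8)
The plan is to prove \erf{v-v-inv} by a short direct computation, using only the definition of the Drinfeld-type map $f_{Q^{-1}}$, the ribbon axiom $\Delta\cir v \eq (v\oti v)\cdot Q^{-1}$ from \erf{def-ribbon}, centrality of the ribbon element $v$, and the defining property of the cointegral $\lambda$. Presumably this identity is exactly the ingredient needed to extend the computation in part (ii) of Lemma \ref{lem:101S,101T} from the case $\omega\eq\id_H$ to a general ribbon automorphism, where the central elements $v$ and $v^{-1}$ can no longer simply be cancelled against each other but must be transported through a Drinfeld map.

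Concretely, I would proceed in three steps. First, unfold the left hand side: writing $\phi \df \lambda\cir m\cir(v\oti\id_H) \iN \Hs$ for the functional $x\mapsto\lambda(vx)$, the definition $f_{Q^{-1}} \df (d_H\oti\id_H)\cir(\idHs\oti Q^{-1})$ gives $f_{Q^{-1}}(\phi) \eq (\phi\oti\id_H)(Q^{-1})$, i.e.\ $\sum \lambda\big(v\,(Q^{-1})_{(1)}\big)\,(Q^{-1})_{(2)}$ in Sweedler-type notation. Second, multiply the ribbon relation on the left by $v\oti v$ to recognise $(v\oti v)\cdot Q^{-1} \eq \Delta(v)$; using centrality of $v$ to pull a factor $v^{-1}$ out to the left then turns the previous expression into $v^{-1}\cdot(\lambda\oti\id_H)(\Delta v)$. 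Third, invoke the defining property of the (right) cointegral, $(\lambda\oti\id_H)\cir\Delta \eq \eta\cir\lambda$, applied to the element $v$, which replaces $(\lambda\oti\id_H)(\Delta v)$ by $\lambda(v)\,\one$. Stringing these together,
\[
  f_{Q^{-1}}(\phi) \;=\; v^{-1}\cdot(\lambda\oti\id_H)(\Delta v) \;=\; v^{-1}\cdot\lambda(v)\,\one \;=\; (\lambda\cir v)\,v^{-1}\,,
\]
which is the claim. If preferred, the same three moves can be rendered in the graphical calculus, in the spirit of the identities \erf{fQS_Psi}.

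Since every ingredient is either an axiom or a standard defining identity, I do not expect a genuine obstacle; the only point that requires care is the bookkeeping of conventions -- which tensor leg of $Q^{-1}$ the evaluation $d_H$ in $f_{Q^{-1}}$ contracts, and correspondingly which one-sided cointegral identity is invoked -- so that the factor orderings produced by $\Delta v \eq (v\oti v)Q^{-1}$ line up with the one actually needed. With the conventions fixed in the excerpt these match directly; should a side have to be swapped, $\apo\cir v \eq v$ together with unimodularity of $H$ supplies the missing step.
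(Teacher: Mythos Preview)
Your proof is correct and follows essentially the same approach as the paper's own proof, which says only: ``Just use the fact (see formula \erf{def-ribbon}) that $(v \oti v)\cdot Q^{-1} \eq \Delta\cir v$ and afterwards the defining property of the cointegral.'' You have simply spelled out these two moves in detail, together with the unfolding of $f_{Q^{-1}}$ and the use of centrality of $v$ to extract the factor $v^{-1}$.

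One minor remark on your motivational paragraph: the identity is not used to extend part (ii) of Lemma~\ref{lem:101S,101T} to general $\omega$, but rather as the key input for Lemma~\ref{lambda-omega}, which shows that every ribbon automorphism preserves the integral and cointegral. This does not affect the correctness of your argument.
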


\begin{proof}
Just use the fact (see formula \erf{def-ribbon}) that $(v \oti v) \,{\cdot}\,
Q^{-1} \eq \Delta \cir v$ and afterwards the defining property of the cointegral.
\end{proof}

As a direct consequence we have

\begin{lemma}\label{lambda-omega}
Every ribbon automorphism $\omega$ of $H$ preserves the integral and
cointegral of $H$, i.e.
  \be
  \lambda \cir \omega = \lambda  \qquand   \omega \cir \Lambda = \Lambda \,.
  \labl{lo=l,oL=L}
\end{lemma}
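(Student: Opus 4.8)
The plan is to transport the identity \erf{v-v-inv} along $\omega$ and compare it with the original; since $\omega$ is a ribbon automorphism it fixes every ingredient occurring there, so the comparison will pin down the scalar relating $\lambda\cir\omega$ to $\lambda$. First I would record two preliminary observations. (1)~From $(\omega\oti\omega)\cir R=R$ and $Q=R_{21}\cdot R$ it follows that $(\omega\oti\omega)\cir Q=Q$ and hence $(\omega\oti\omega)\cir Q^{-1}=Q^{-1}$; plugging this into $f_{Q^{-1}}=(d_H\oti\id_H)\cir(\idHs\oti Q^{-1})$ yields the equivariance
\[
  \omega\big(f_{Q^{-1}}(\beta)\big)=f_{Q^{-1}}(\beta\cir\omega^{-1})
\]
for all $\beta\iN\Hs$ (and likewise for $f_Q$). (2)~Since $\omega$ is a Hopf algebra automorphism, $\lambda\cir\omega$ is again a right cointegral of $H$ and $\omega\cir\Lambda$ a two-sided integral (using that $H$ is unimodular); by uniqueness up to scalars there are $c,c'\iN\ko^\times$ with $\lambda\cir\omega=c\,\lambda$ --- equivalently $\lambda\cir\omega^{-1}=c^{-1}\lambda$ --- and $\omega\cir\Lambda=c'\,\Lambda$.

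Next I would apply the equivariance~(1) to the functional $\beta\df\lambda\cir m\cir(v\oti\id_H)$, that is, $\beta\colon h\mapsto\lambda(vh)$. Because $\omega(v)=v$ and $\omega^{-1}$ is an algebra map, $\beta\cir\omega^{-1}=(\lambda\cir\omega^{-1})\cir m\cir(v\oti\id_H)=c^{-1}\beta$, so the equivariance gives $\omega\big(f_{Q^{-1}}(\beta)\big)=c^{-1}f_{Q^{-1}}(\beta)$. On the other hand \erf{v-v-inv} says $f_{Q^{-1}}(\beta)=(\lambda\cir v)\,v^{-1}$, which is fixed by $\omega$ since $\omega(v)=v$ implies $\omega(v^{-1})=v^{-1}$; hence $(1-c^{-1})\,(\lambda\cir v)\,v^{-1}=0$. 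Now $f_{Q^{-1}}$ is invertible (like $f_Q$) by factorizability of $H$, and $\beta\nE0$ since $v$ is invertible and $\lambda\nE0$, so $f_{Q^{-1}}(\beta)=(\lambda\cir v)\,v^{-1}\nE0$; therefore $c=1$, i.e.\ $\lambda\cir\omega=\lambda$. For the integral, evaluating the chosen normalization $\lambda\cir\Lambda=1$ on $\omega\cir\Lambda=c'\,\Lambda$ and using the step just proved gives $c'=\lambda(\omega(\Lambda))=(\lambda\cir\omega)(\Lambda)=\lambda\cir\Lambda=1$, so $\omega\cir\Lambda=\Lambda$ as well.

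The only input that is not pure bookkeeping is the non-vanishing $\lambda\cir v\nE0$, and I expect that to be the one point worth making explicit: it is not an extra hypothesis but a consequence of \erf{v-v-inv} together with invertibility of the Drinfeld map. Everything else --- the two equivariance identities, the fact that $\omega$ preserves the one-dimensional spaces of integrals and cointegrals, and the final scalar bookkeeping with the chosen normalization $\lambda\cir\Lambda=1$, $f_Q(\lambda)=\Lambda$ --- is routine.
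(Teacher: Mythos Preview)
Your proof is correct and follows essentially the same strategy as the paper: transport the identity \erf{v-v-inv} along $\omega$, use that $\omega$ fixes $v$ and $Q^{-1}$, and then invoke invertibility of $f_{Q^{-1}}$ and of $v$ to conclude $\lambda\cir\omega=\lambda$; the integral part is handled identically via uniqueness and the normalization $\lambda\cir\Lambda=1$. The only cosmetic difference is that you first introduce the scalar $c$ from uniqueness of cointegrals and then show $c=1$, whereas the paper compares the transported identity directly with \erf{v-v-inv}, applies $f_{Q^{-1}}^{-1}$, and cancels $v$ to get $\lambda\cir\omega^{-1}=\lambda$ without ever naming the scalar.
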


\begin{proof}
Consider the equality obtained by composing \erf{v-v-inv} with
$\omega$. Using on the left hand side of this equality
that $\omega$ is an algebra morphism and that it preserves $v$
(and thus $v^{-1}$) as well as $Q^{-1}$, one arrives at an equality that
differs from \erf{v-v-inv} only by replacing $\lambda$ on the left hand side
by $\lambda \cir \omega^{-1}$. Using further that the morphism $f_{Q^{-1}}$ as
well as the element $v$ of $H$ are invertible, the first of the equalities
\erf{lo=l,oL=L} follows.
\\
Further note that $\omega\cir\Lambda$ is again a non-zero
integral and is thus proportional to $\Lambda$. Since
$\lambda\cir\omega\cir\Lambda \eq \lambda\cir\Lambda \iN \ko$
is non-zero, this implies the second equality in \erf{lo=l,oL=L}.
\end{proof}

We can now generalize the morphisms $\Cor gn$ defined in \eqref{Sk_morph} by
simply replacing every occurrence of the Frobenius algebra $F$ with $\Fomega$.
We denote the so obtained morphisms by $\Corw gn$.

\begin{prop}\label{prop:Corw}
For every ribbon automorphism $\omega$ of $H$ we have
  \be
  \Corw gn = \Cor gn \circ \big(\id_{\Hs}\oti (\omega^{-1})^*\big)^{\otimes g}_{}
  \labl{eq:Corw}
as linear maps, for all pairs of integers $g,n \,{\ge}\,0$.
\end{prop}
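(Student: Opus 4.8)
The plan is to compare $\Corw gn$ and $\Cor gn$ piece by piece, using the description \erf{Sk_morph} of $\Cor gn$ (and its evident analogue with every $F$ replaced by $\Fomega$) together with the explicit data \erf{pic-Hb-Frobalgebra} for the Frobenius structure and \erf{Lyubact_HKH} for the $K$-action. The first observation is that, viewed as \emph{linear} maps, nothing changes in the Frobenius part: by part (ii) of the Lemma recalled from \cite{fuSs3} the object $\Fomega$ is made into a commutative symmetric Frobenius algebra by the very same formulas \erf{pic-Hb-Frobalgebra}, and those formulas involve only the product, coproduct, counit, integral and cointegral of $H$, not the bimodule action; hence $m_{\Fomega}=m_F$, $\eta_{\Fomega}=\eta_F$, $\Delta_{\Fomega}=\Delta_F$ and $\eps_{\Fomega}=\eps_F$ on the nose. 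Consequently the only way $\Corw gn$ can differ from $\Cor gn$ is through the replacement of $\rho^K_F$ by $\rho^K_{\Fomega}$ at each of the $g$ trivalent vertices attached to the handle lines.

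The heart of the argument is then the identity of linear maps
\[
  \rho^K_{\Fomega}
  = \rho^K_F \circ \big( (\id_{\Hs}\oti(\omega^{-1})^*) \oti \id_{\Hs} \big) .
\]
To establish it I would read off \erf{Lyubact_HKH}: the first copy of $\Hs$ inside $K$ enters only through the left action $\rho_X$, while the second copy enters only through the right action $\ohr_X$, in each case precomposed with a fixed linear map $\Hs\to H$ assembled from the monodromy matrices $Q^{\pm1}$ (and from the antipode, which the Hopf automorphism $\omega$ commutes with). Passing from $F$ to $\Fomega$ amounts, by \erf{F_omega}, to inserting one $\omega$ on the $H$-line feeding $\ohr_F$ and leaving the line feeding $\rho_F$ untouched. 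Now $\omega\oti\omega$ is an algebra morphism of $H\otik H$ fixing $R$, hence fixing $R_{21}$ and therefore $Q=R_{21}R$ and $Q^{-1}$; a short computation then gives $\omega\circ f_{Q^{\pm1}}=f_{Q^{\pm1}}\circ(\omega^{-1})^*$, and likewise for the braided variants of the Drinfeld map occurring in \erf{Lyubact_HKH}. This slides the inserted $\omega$ down through the $Q$'s and converts it into $(\omega^{-1})^*$ acting on the second leg of $K$, which is exactly the displayed identity.

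Finally I would substitute this into the definition of $\Corw gn$. Since each of the $g$ handle lines in \erf{Sk_morph} carries precisely one factor $\rho^K_F$ (and distinct handle lines feed distinct vertices), applying the identity $g$ times produces $g$ independent factors $\id_{\Hs}\oti(\omega^{-1})^*$, one at the bottom of each $K$-input, which proves $\Corw gn=\Cor gn\circ(\id_{\Hs}\oti(\omega^{-1})^*)^{\otimes g}$ for $n>0$; the case $n=0$ then follows by composing with $\eps_{\Fomega}=\eps_F$ and using the result for $n=1$. The only genuinely non-formal step is the second paragraph: verifying $\omega\circ f_{Q^{\pm1}}=f_{Q^{\pm1}}\circ(\omega^{-1})^*$ from the ribbon-automorphism condition $(\omega\oti\omega)\circ R=R$, and checking against \erf{Lyubact_HKH} that the right-action twist really only touches the second copy of $\Hs$ in $K$; everything else is bookkeeping with the diagram \erf{Sk_morph} and with the first-paragraph identification of the Frobenius morphisms. (Lemma \ref{lambda-omega} is the statement that ensures the formulas \erf{pic-Hb-Frobalgebra} indeed define a Frobenius structure on $\Fomega$, and so is used implicitly via the cited Lemma of \cite{fuSs3}.)
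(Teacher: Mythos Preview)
Your proposal is correct and follows essentially the same approach as the paper: both reduce the claim to the identity $\rho^K_{\Fomega} = \rho^K_F \circ \big((\id_{\Hs}\oti(\omega^{-1})^*) \oti \id_{\Hs}\big)$, which the paper obtains in one line from $(\id_H\oti\omega)\circ Q = (\omega^{-1}\oti\id_H)\circ Q$ (an immediate consequence of $(\omega\oti\omega)\circ Q = Q$), while you phrase the same computation as $\omega\circ f_{Q} = f_{Q}\circ(\omega^{-1})^*$. Your bookkeeping on the Frobenius morphisms and the handle-by-handle substitution is more explicit than the paper's terse ``which, in turn, implies \erf{eq:Corw}'', but the argument is the same.
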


\begin{proof}
Inserting the $H$-bimodule structure \erf{F_omega} of $\Fomega$ into the
expression \erf{Lyubact_HKH} for the action of $K$ we have
  \eqpic{Lyubact_HKH_Fomega} {125} {44} {
  \put(0,45)       {$ \rho^K_{\Fomega} =~ $}
    \put(50,0) { \Includepichtft{133g}
  \put(-4.4,-7.7)  {\sse$ \Hss $}
  \put(10,-7.7)    {\sse$ \Hss $}
  \put(44.7,-7.7)  {\sse$ \Hss $}
  \put(31,2.8)     {\sse$ Q $}
  \put(13,46.8)    {\sse$ Q^{-1} $}
  \put(54.7,68.8)  {\sse$ \ohr_F^{} $}
  \put(35.2,90.5)  {\sse$ \rho_F^{} $}
  \put(46.5,108.3) {\sse$ \Hss $}
  \put(69.3,41.3)  {\sse$\omega$}
  } }
Using $(\id_H \oti \omega) \cir Q \eq (\omega^{-1} \oti \id_H) \cir Q$, it
follows immediately that $\rho^K_{\Fomega} \eq \rho^K_F \cir (\id_{\Hs}
\oti (\omega^{-1})^*)$ which, in turn, implies \erf{eq:Corw}.
\end{proof}

Next we note the following $\omega$-twisted version of Lemma \ref{lem:Hpastloops}:

\begin{lemma}\label{lem:Hpastloopsw}
Denoting, as in picture {\rm \erf{Hpastloops}}, by $\rho^{}_{K^{\otimes p}_{}}$
and $\ohr^{}_{K^{\otimes p}_{}}$ the left and right $H$-actions on
$K^{\otimes p}_{}$, we have
  \eqpic{Hpastloopsw} {360} {107} {\setulen80
    \put(0,0) {\includepichtft{132cB}
  \put(-5,-10.2)   {\sse$ H $}
  \put(16.1,-10.2) {\sse$ K $}
  \put(59.4,-10.2) {\sse$ K $}
  \put(76.2,-10.2) {\sse$ H $}
  \put(92.2,-10.2) {\sse$ \Fomega $}
  \put(92.9,293.3) {\sse$ \Fomega $}
  \put(28,73.2)    {\sse$ \rho_{\HK^{\!\otimes p}}^{} $}
  \put(39,43.2)    {\sse$ \ohr_{\HK^{\!\otimes p}}^{} $}
  }
  \put(152,129)    {$=$}
    \put(230,0) { \put(-37.5,0)    {\includepichtft{131g}}
  \put(-42.5,-10.2)   {\sse$ H $}
  \put(-19,-10.2)  {\sse$ \Hss $}
  \put(10.2,-10.2) {\sse$ \Hss $}
  \put(58.2,-10.2) {\sse$ \Hss $}
  \put(87.4,-10.2) {\sse$ \Hss $}
  \put(121.2,-10.2){\sse$ H $}
  \put(139.2,-10.2){\sse$ \Hss $}
  \put(192.2,292.5){\sse$ \Hss $}
  \put(23.6,205.5) {\sse$ \Lambda $}
  \put(99.6,125.1) {\sse$ \Lambda $}
  \put(47.9,244.5) {\sse$ \ohrad $}
  \put(124.5,165)  {\sse$ \ohrad $}
  \put(122.2,13.6) {$\sse\omega$}
  \put(71.5,87.5)  {$ {(\omega^{-1})}^* $}
  \put(-5,87.5)    {$ {(\omega^{-1})}^* $}
  } }
\end{lemma}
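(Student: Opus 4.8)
The plan is to run the proof of Lemma~\ref{lem:Hpastloops} again while keeping track of where the automorphism $\omega$ travels. Two additional facts are needed: first, the identity $\rho^K_{\Fomega} \eq \rho^K_F \cir (\id_{\Hs}\oti(\omega^{-1})^*)$ established in the proof of Proposition~\ref{prop:Corw} (which rests on $(\id_H\oti\omega)\cir Q \eq (\omega^{-1}\oti\id_H)\cir Q$); and second, Lemma~\ref{lambda-omega}, i.e.\ that $\omega$ fixes the integral $\Lambda$ and the cointegral $\lambda$ of $H$. So I would begin by substituting $\rho^K_{\Fomega} \eq \rho^K_F \cir (\id_{\Hs}\oti(\omega^{-1})^*)$ at each of the two $\rho^K_{\Fomega}$-vertices on the left hand side of \erf{Hpastloopsw}, and by writing out the twisted right action $\ohr_F\cir(\id_{\Hs}\oti\omega)$ of $\Fomega$. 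After this reduction the diagram is precisely the left hand side of \erf{Hpastloops}, decorated with one copy of $\omega$ on the $H$-strand that acts from the right on the bottom insertion and with one copy of $(\omega^{-1})^*$ on each $K$-strand entering a $\rho^K_F$-vertex.

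Next I would replay the three moves of the proof of Lemma~\ref{lem:Hpastloops}: insert the expression \erf{CorrgnH} for $\Corr g11$ together with the explicit form \erf{Lyubact_HKH} of $\rho^K_F$, write out the $H$-actions $\rho_{K^{\otimes p}}$ and $\ohr_{K^{\otimes p}}$ on $K^{\otimes p}$ and apply Lemma~\ref{lem:reppastint}, reaching the analogue of \erf{Hpastloops_1}; then use the representation property of $\ohrad$ and the anti-(co)algebra morphism property of the antipode to reach the analogue of \erf{Hpastloops_2}; and finally invoke that the iterated coproduct of $H$ is a bimodule morphism. The point is that the decorations introduced in the first step are inert under every one of these moves: since $\omega$ is a Hopf-algebra automorphism it commutes with $m$, $\Delta$ and $\apo$, hence with $\ohrad$ and with the iterated coproduct; it slides past the monodromy matrices inside \erf{Lyubact_HKH} by the same identity $(\id_H\oti\omega)\cir Q \eq (\omega^{-1}\oti\id_H)\cir Q$; and, by Lemma~\ref{lambda-omega}, it leaves the copies of $\Lambda$ and $\lambda$ appearing in \erf{CorrgnH} untouched, so that no spurious scalars or extra automorphisms are produced as the $\Lambda$'s are moved. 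Consequently the computation reproduces the right hand side of \erf{Hpastloops} with the $\omega$ and $(\omega^{-1})^*$ decorations merely carried along, ending up in the positions displayed on the right hand side of \erf{Hpastloopsw}.

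The only genuine work lies in this last point: bookkeeping the individual copies of $\omega$ and $(\omega^{-1})^*$ through the diagram deformations and confirming that they come to rest exactly where \erf{Hpastloopsw} places them. I expect this to be the main obstacle, but it is purely mechanical; no conceptual ingredient is required beyond Lemma~\ref{lambda-omega} and the fact that $\omega$ commutes with all the structure maps used in the proof of Lemma~\ref{lem:Hpastloops}.
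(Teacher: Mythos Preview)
Your proposal is correct and follows essentially the same approach as the paper: rerun the argument of Lemma~\ref{lem:Hpastloops} while tracking the automorphism through the diagram. The paper's proof is a one-liner that singles out the intertwining identity $\ohrad \cir (\omega^{-1}\oti\id_{H}) \eq \omega^{-1}\cir\ohrad \cir (\id_H\oti\omega)$ as the key additional ingredient; this is precisely what you package under ``$\omega$ commutes with $\ohrad$'', so your more detailed bookkeeping (including the appeal to Lemma~\ref{lambda-omega}) is just a fleshed-out version of the same argument.
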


\begin{proof}
This follows by the same line of arguments as in Lemma \ref{lem:Hpastloops},
combined with the identity $\ohrad \cir (\omega^{-1}\oti\id_{H})
\eq \omega^{-1}\cir\ohrad \cir (\id_H\oti\omega)$.
\end{proof}

\begin{thm}
Let $H$ be a finite-dimensional factorizable ribbon Hopf algebra and $\omega$
a ribbon automorphism of $H$. Then for any pair of integers $g,n \,{\ge}\,0$
the morphism $\Corw gn$ is invariant under the action
$\pi_{g:n}^{(\Fomega)^{\otimes n}_{}}$ of the mapping class group $\Mapgn$.
\end{thm}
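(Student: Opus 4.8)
The plan is to rerun the proof of Theorem \ref{thm:main} essentially verbatim, replacing the Frobenius algebra $F$ by $\Fomega$ everywhere, and to track the only new feature, namely the appearance of $\omega$ in the $K$-action on $\Fomega$. First I record what survives unchanged: by the lemma preceding Proposition \ref{prop:Corw}, $\Fomega$ is again a commutative symmetric Frobenius algebra in \HBimod\ with trivial twist and with (co)product still given by \erf{pic-Hb-Frobalgebra}; in particular $\theta_{\Fomega}\eq\id$, and $\Fomega$ is commutative, hence also cocommutative just as for $F$, and satisfies the same associativity and Frobenius relations. Consequently the treatment of the generators $\Ri$ (invariance from $\theta_{\Fomega}\eq\id$) and $\omega_i$ (invariance from cocommutativity of $\Fomega$) --- items (i) and (ii) in the proof of Theorem \ref{thm:main} --- carries over with no change at all.

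For the generators acting (partly) on the handle factors --- $S_k,b_k,d_k,a_k,e_k$ and $t_{j,k}$ --- the only $\omega$-dependence is the factor $\omega$ in the $K$-action $\rho^K_{\Fomega}$ displayed in \erf{Lyubact_HKH_Fomega} (equivalently, by Proposition \ref{prop:Corw}, the maps $(\omega^{-1})^*$ sitting on the $g$ copies of $K$), together with the occurrences of the integral $\Lambda$ and cointegral $\lambda$ of $H$. The key point is that $\omega$ preserves $\Lambda$ and $\lambda$ (Lemma \ref{lambda-omega}), that $(\id_H\oti\omega)\cir Q\eq(\omega^{-1}\oti\id_H)\cir Q$, and that $\ohrad\cir(\omega^{-1}\oti\id_H)\eq\omega^{-1}\cir\ohrad\cir(\id_H\oti\omega)$; hence every $\omega$ introduced on an $H$-line can be pushed through the products, coproducts, antipodes and monodromy matrices occurring in the computations of Sections \ref{sec:lemmata} and \ref{sec:proofmain} and then either absorbed into $\Lambda$ or $\lambda$ or cancelled against an $\omega^{-1}$. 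Concretely I would establish the $\omega$-twisted analogues of the auxiliary statements used in the proof: the $\omega$-version of Lemma \ref{lem:Hpastloops} is already provided as Lemma \ref{lem:Hpastloopsw}, and feeding it --- together with the purely Hopf-algebraic identities \erf{Q2Delta}--\erf{inv_Qq3} (which do not involve $\omega$) and with Lemma \ref{lambda-omega} --- into the proofs of Lemmas \ref{lem:reppastint}, \ref{lem:Sinv-7}, \ref{lem:101S,101T}, \ref{lem:inv_twist}, of the $\QQ$-invariance \erf{QQ_inv}, of Lemma \ref{lem:removeQq_t}, Corollary \ref{cor:BKKm-1}, Lemma \ref{lem:twist_S_remove} and Lemma \ref{lem:t_act_Cor} yields their $\Fomega$-counterparts. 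With these in hand, the case analysis (iii)--(vi) in the proof of Theorem \ref{thm:main} goes through unchanged and establishes invariance of $\Corw gn$ under the remaining generators of $\Mapgn$.

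The only genuine difficulty is bookkeeping: one must check, at each manipulation, that the $\omega$'s and $(\omega^{-1})^*$'s can be moved coherently so that none is left stranded on an internal line. Once Lemmas \ref{lambda-omega} and \ref{lem:Hpastloopsw} are available this is routine --- there is no new geometric, categorical or representation-theoretic input beyond the $\omega\eq\id_H$ case --- but it is precisely the place where an oversight would be most likely, so in practice I would write out the $\omega$-decorated versions of the two or three most delicate pictures in full, in particular the analogue of \erf{inv_tjk_1} governing the generators $t_{j,k}$.
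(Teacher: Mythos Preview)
Your proposal is correct and uses the same key ingredients as the paper: Proposition \ref{prop:Corw}, Lemma \ref{lambda-omega}, and Lemma \ref{lem:Hpastloopsw}, together with the unchanged Frobenius properties of $\Fomega$.

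There is one organizational difference worth noting. For the generators $S_k,a_k,b_k,d_k,e_k$ the paper does \emph{not} rerun the $\omega$-decorated computations as you propose; instead it observes that each of the endomorphisms $z_\gamma$ of $K^{\otimes g}$ commutes with $(\id_{\Hs}\oti(\omega^{-1})^*)^{\otimes g}$ (this is displayed for $\SK$ in \erf{S_KH_w} and is analogous for the others, using only that $\omega$ preserves $Q$, $v$, $\lambda$ and is a Hopf morphism). Via Proposition \ref{prop:Corw} this reduces invariance for these generators immediately to the already-proven case $\omega=\id_H$, avoiding the need to produce $\omega$-versions of Lemma \ref{lem:101S,101T}, Proposition \ref{prop:tripleQQ}, etc. For the generators $t_{j,k}$, however, the paper does exactly what you outline: it invokes the $\Fomega$-versions of Proposition \ref{prop:triple} and of Lemmas \ref{lem:removeQq_t}, \ref{lem:twist_S_remove}, \ref{lem:t_act_Cor}, derived from Lemma \ref{lem:Hpastloopsw}. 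So your plan and the paper's proof coincide in substance; the paper's commutation shortcut just packages the ``push $\omega$ through and absorb into $\lambda$'' step once and for all rather than inside each lemma.
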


\begin{proof}
Just like in the case $\omega \eq \id_\Hs$, invariance under the action of the
generators $\omega_i$ and $\Ri$ is an immediate consequence of the fact that
$\Fomega$ is cocommutative and has trivial twist.
\\[3pt]
Next consider the generators $\sk,\ak,\bk\,\dk$ and $\ek$. Proposition
\ref{prop:Corw} reduces invariance to the statement that the morphism
$\pi_{g:n}^{(\Fomega)^{\otimes n}_{}\!}(\gamma)$ commutes with
$(\id_{\Hs}\oti(\omega^{-1})^*)^{\otimes g}$ for $\gamma \eq S_k, a_m, b_m, d_m$
or $e_m$. In particular, for the case of $\gamma\eq S_1$ and
$g\eq 1$, the following chain of equalities establishes invariance:
  \Eqpic{S_KH_w} {420}{44} { \put(-7,3){
    \put(0,-6) { \Includepichtft{121dA}
  \put(-4.5,-9.2)  {\sse$ \Hss $}
  \put(12.7,51.1)  {\sse$ Q^{-1} $}
  \put(37.7,-9.2)  {\sse$ \Hss $}
  \put(32.5,87)    {\sse$ \lambda $}
  \put(44.6,106)   {\sse$ \Hss $}
  \put(58.4,11.8)  {\sse$ Q $}
  \put(74.5,69)    {\sse$ \lambda $}
  \put(93.6,106)   {\sse$ \Hss $}
  \put(74,24)      {\sse$ \omega^{-1}$}
  }
  \put(127,42)     {$=$}
    \put(160,-6) { \Includepichtft{121eA}
  \put(-4.5,-9.2)  {\sse$ \Hss $}
  \put(12.7,51.1)  {\sse$ Q^{-1} $}
  \put(38.9,-9.2)  {\sse$ \Hss $}
  \put(32.5,87)    {\sse$ \lambda $}
  \put(44.6,106)   {\sse$ \Hss $}
  \put(60.4,2.1)   {\sse$ Q $}
  \put(77.5,78)    {\sse$ \lambda $}
  \put(93,106)     {\sse$ \Hss $}
  \put(65.3,62.5)  {\sse$ \omega^{-1}$}
  \put(65,20.1)    {\sse$ \omega$}
  }
  \put(285,42)     {$=$}
    \put(320,-6) { \Includepichtft{121fA}
  \put(-4.5,-9.2)  {\sse$ \Hss $}
  \put(13,51.1)    {\sse$ Q^{-1} $}
  \put(28.9,-9.2)  {\sse$ \Hss $}
  \put(32.5,87)    {\sse$ \lambda $}
  \put(43.6,106)   {\sse$ \Hss $}
  \put(61.1,2.1)   {\sse$ Q $}
  \put(80.5,56)    {\sse$ \lambda $}
  \put(95.6,106)   {\sse$ \Hss $}
  \put(43.5,30.1)  {\sse$ \omega^{-1}$}
  } } }
The first of these equalities follows by pushing the automorphism $\omega$
through the product and using that $\omega$ commutes with the antipode of $H$,
while the second equality follows by using $\lambda \cir \omega \eq \lambda$
from Lemma \ref{lambda-omega} and $(\omega\oti\omega)\circ Q\eq Q$.
\\
That $[\pi_{g:n}^{(\Fomega)^{\otimes n}_{}\!}(\gamma),
(\id_{\Hs}\oti(\omega^{-1})^*)^{\otimes g}] \eq 0$ holds as well  for any genus
$g$ and any of the generators $\gamma \eq S_k, a_m, b_m, d_m, e_m$
is shown in a completely analogous manner.
\\[3pt]
It remains to consider the action of the generators $t_{j,k}$.
To this end we just observe that because of Proposition \ref{prop:Corw},
a version of Proposition \ref{prop:triple} holds in which $F$ is replaced by
$\Fomega$, while Lemma \ref{lem:Hpastloopsw} implies that there are versions
of Lemma \ref{lem:removeQq_t} and Lemma \ref{lem:twist_S_remove} in which $F$
is replaced by $\Fomega$. Combining these versions of Proposition
\ref{prop:triple} and Lemmas \ref{lem:removeQq_t} and \ref{lem:twist_S_remove},
one arrives at a corresponding $\omega$-twisted version of Lemma
\ref{lem:t_act_Cor}. Now notice that in the case of $\omega \eq \id_H$,
invariance under the action of $t_{j,k}$ follows from Lemma \ref{lem:t_act_Cor}
by just invoking that $F$ is Frobenius. As a consequence, the twisted version
of Lemma \ref{lem:t_act_Cor} allows us to deduce invariance in precisely the
same manner as for $\omega \eq \id_H$.
\end{proof}


  \vskip 5.5em

\noindent{\sc Acknowledgments:}
We thank Benson Farb for a helpful correspondence.
JF is grateful to Hamburg university, and in particular to CSc and Astrid
D\"orh\"ofer, for their hospitality during the time when this study was initiated.
\\
JF is largely supported by VR under project no.\ 621-2009-3993.
CSc is partially supported by the Collaborative Research Centre 676 ``Particles,
Strings and the Early Universe - the Structure of Matter and Space-Time'' and
by the DFG Priority Programme 1388 ``Representation Theory''.

\newpage


  \newcommand\wb{\,\linebreak[0]} \def\wB {$\,$\wb}
  \newcommand\Bi[2]    {\bibitem[#2]{#1}}
  \newcommand\inBO[9]  {{\em #9}, in:\ {\em #1}, {#2}\ ({#3}, {#4} {#5}),
                         p.\ {#6--#7} {{\tt [#8]}}}
  \renewcommand\J[7]   {{\em #7}, {#1} {#2} ({#3}) {#4--#5} {{\tt [#6]}}}
  \newcommand\JO[6]    {{\em #6}, {#1} {#2} ({#3}) {#4--#5} }
  \newcommand\BOOK[4]  {{\em #1\/} ({#2}, {#3} {#4})}
  \newcommand\prep[2]  {{\em #2}, preprint {\tt #1}}
  \def\aspm  {Adv.\wb Stu\-dies\wB in\wB Pure\wB Math.}
  \def\coma  {Con\-temp.\wb Math.}
  \def\comp  {Com\-mun.\wb Math.\wb Phys.}
  \def\isjm  {Israel\wB J.\wb Math.}
  \def\joal  {J.\wB Al\-ge\-bra}
  \def\jktr  {J.\wB Knot\wB Theory\wB and\wB its\wB Ramif.}
  \def\jpaa  {J.\wB Pure\wB Appl.\wb Alg.}
  \def\momj  {Mos\-cow\wB Math.\wb J.}
  \def\nupb  {Nucl.\wb Phys.\ B}
  \def\plms  {Proc.\wB Lon\-don\wB Math.\wb Soc.}
  \def\pcps  {Proc.\wB Cam\-bridge\wB Philos.\wb Soc.}
  \def\slnm  {Sprin\-ger\wB Lecture\wB Notes\wB in\wB Mathematics}
  \def\taac  {Theo\-ry\wB and\wB Appl.\wb Cat.}
  \def\thmp  {Theor.\wb Math.\wb Phys.}

\small

\end{document}